\documentclass[12pt,reqno]{amsbook}

\usepackage[top=3cm, bottom=2.5cm, left=2cm, right=2cm]{geometry}
\usepackage{amsmath,amssymb,amsthm,mathrsfs}
\usepackage[pdftex]{graphicx}
\usepackage{epsfig}
\usepackage[english]{babel}
\usepackage[usenames,dvipsnames]{color}
\usepackage{xcolor}
\usepackage{enumerate}
\usepackage{float}
\usepackage{slashed}
\usepackage[color]{showkeys}  
\usepackage[pdftex,pdfpagelabels,bookmarks,hyperindex,hyperfigures,pagebackref]{hyperref} 
\usepackage{setspace}

\usepackage[OT2,OT1]{fontenc} \newcommand\cyr {
  \renewcommand\rmdefault{wncyr} \renewcommand\sfdefault{wncyss}
  \renewcommand\encodingdefault{OT2} \normalfont \selectfont }
\DeclareTextFontCommand{\textcyr}{\cyr}

\newcommand{\rb}{\mbox{\kern -1.3em {\cyr b}}}

\onehalfspacing


\theoremstyle{plain}
\newtheorem{theorem}{Theorem}[section]
\newtheorem{proposition}{Proposition}[section]
\newtheorem{definition}{Definition}[section]
\newtheorem{remark}{Remark}[section]
\newtheorem{lemma}{Lemma}[section]
\newtheorem{corollary}{Corollary}[theorem]
\theoremstyle{definition}
\newtheorem{example}{Example}[section]
\newtheorem{exercise}{Exercise}[section]


\newcommand{\inner}[1]{\left\langle  #1 \right\rangle }




\newcommand{\Z}{{\mathbb Z}}

\newcommand{\e}{{\mathbf{e}}}

\newcommand{\calC}{{\mathcal C}}


\newcommand{\supp}[1]{{\operatorname{supp}#1}}



%
%
%


\numberwithin{equation}{section}
\numberwithin{figure}{section}
\numberwithin{section}{chapter}

\begin{document}

\frontmatter

\title[A Primer on Script Geometry]{A Primer on Script Geometry}

 \author[P. Cerejeiras]{Paula Cerejeiras}

\author{Uwe K\"ahler}%

\author{Teppo Mertens}%

\author{Frank Sommen}%

\author{Adrian Vajiac}%

\author{Mihaela Vajiac}%


\date{\today}

\maketitle
\setcounter{tocdepth}{3}
\tableofcontents

\mainmatter

\tableofcontents

\chapter{Introduction}

In the last two decades one can observe an increasing interest in the analysis of discrete structures. On the one hand this increasing interest is based on the fact that increased computational power is nowadays available to everybody and that computers can essentially work only with discrete values. This means that one requires discrete structures which are inspired by the usual continuous structures. On the other hand, the increased computational power also means that problems in physics which are traditionally modeled by means of continuous analysis are more and more directly studied on the discrete level, the principal example being the Ising model from statistical physics as opposed to the continuous Heisenberg model. Another outstanding example can be seen in the change of the philosophy of the Finite Element Method. The classical point of view of the Finite Element Method is to start from the variational formulation of a partial differential equation and to apply a Galerkin-Petrov or a Galerkin-Bubnov method via a neste sequence of finite-dimensional subspaces. These are created by discretizing the continuous domain by a mesh and to construct the basis functions of the finite-dimensional spaces as functions over the mesh. The modern approach lifts the problem and its finite element modelation directly on to the mesh resulting in the so-called Finite Element Exterior Calculus. The basic idea behind this discrete exterior calculus is that large classes of mixed finite element methods can be formulated on Hilbert complexes where one solves the variational problem on finite-dimensional subcomplexes. This not only represents a more elegant way of looking at finite element methods, but it also has two practical advantages. First of all it allows a better characterization of stable discretizations by requiring two hypotheses: they can be written as a subcomplex of a Hilbert complex and there exists a bounded cochain projection from that complex to the subcomplex~\cite{Arnold2}.  This was later on extended to abstract Hilbert complexes~\cite{Stern}.  Secondly, it mimics the engineer's approach of directly performing finite element modeling on the mesh.

The principal example of this approach is the Hodge-deRham complex for approximating manifolds. 
Maybe it is worthwile to point out that the underlying ideas are much older.  Whitney introduced his complex of Whitney forms in 1957~\cite{Whitney}. Among other things, he used them to identity the de Rham cohomology with simplicial cohomology. While these was done with purely geometric applications in mind later in it was shown that Whitney forms are finite elements on the deRham complex. Nevertheless, it is interesting to note that the original idea was purely geometric in nature. 

Another example of this approach can be found in computational modeling ~\cite{Desbrun}. There, a discrete exterior calculus based on simplicial co-chains is introduced. One of the advantages is that it avoids the need for interpolation of forms and many important tools could be obtain like discrete exterior derivative, discrete boundary and co-boundary operator. An important step consisted also in the establishment of a discrete Poinca\'e lemma. It states that given a closed $k$-cochain $\omega$ on a (logically) star-shaped complex, i.e. $d\omega=0$ there exists a $(k-1)$-chochain $\alpha$ such that $\omega=d\alpha$. 

While standard Whitney forms are linked to barycentric coordinates and, therefore, can be easily adapted to more general meshes, a large part of the above mentioned applications of Hodge theory to discrete structures are linked to simplicial complexes which are not that easily adapted to more general meshes. 

To overcome this problem we are going to present a new type of algebraic topology based on the concept of scripts. A priori scripts are based on complexes, but more general than simplicial complexes. It is based on more geometrical constructions which also makes this concept rather intuitive. To make that clear and to make understanding easy we provide many concrete (classic) examples, including the torus, Klein bottle, and projective plane. Also, newly introduced  notions and operations will always be accompanied by concrete examples so as to make understanding easier for the reader. As will be seen many of these notions and operations are rather intuitive while at the same time provide a more geometric understanding than classic approaches. 

One of the key points in this theory is the concept of tightness which replaces the need for the establishment of a Poincar\'e lemma. Hereby, tighness imposes cells and chains to be minimal which is in fact what the geometric meaning of the Poincar\'e lemma represents. This can easily be seen if one notices that tightness means that the local homology at the level of cells is trivial which corresponds to the Poincar\'e lemma for manifolds which says that each point has a neighborhood with trivial homology. 

In Chapter 2 we introduce the basic concepts, including the geometrical offprint of a script, equivalent and unitary scripts. The geometric offprint or skeleton of a script as a the support of a boundary chain will provide us with all the necessary geometrical information so as to represent the geometric boundary of a chain. 

In Chapter 3 we are going to discuss the geometrical properties of scripts. This is closely linked to minimising and uniqueness properties of scripts. In particular the question of the skeleton being a unique minimal script will lead us to the central notion of tightness. A variety of examples will show that tightness is indeed a geometric and intuitive notion. 

One of the essential parts in possible applications is the possibility to manipulate scripts. In Chapter 4 we present and discuss basic operations, such as creation and cleaning (removing) operations as well as identification operations. Again a variety of examples will be given. 

In Chapter 5 we introduce the necessary concepts of metrics on scripts, dual scripts, and the corresponding Dirac and Laplace operators. This will be the groundwork for a function theory of monogenic and harmonic functions on our discrete structures.  

Finally, Chapter 6 will be dedicated to the question of Cartesian products on scripts. We will give two types of Cartesian products on scripts and discuss tighness in this context. As always examples will be provided. Additionally, the introduction of a Cartesian product also allows us to give a notion of discrete curvature in the two-dimensional case which is much more intuitive than the standard notion.


\chapter{Scripts in general}

\section{Complexes}
\begin{definition}
A {\em complex} is a 
(finite or infinite) sequence of modules together with boundary maps $\partial_i:\mathcal{M}_i\longrightarrow\mathcal{M}_{i+1}$ such that $\partial_{i+1}\circ\partial_{i}=0$.
\end{definition}
The starting point for scripts is the idea of a complex of free
modules over $\Z$ together with boundary maps, with certain properties.
A script is a special sequence of modules:
\begin{equation}
  \label{def:script}
  \mathcal{M}_{-2} \longleftarrow \mathcal{M}_{-1} \overset{\partial}{\longleftarrow} \mathcal{M}_{0}
  \overset{\partial}{\longleftarrow} \mathcal{M}_{1} \overset{\partial}{\longleftarrow}\mathcal{M}_{2}
  \overset{\partial} {\longleftarrow} \cdots 
\end{equation}
whereby $\partial : \mathcal{M}_{k} \rightarrow \mathcal{M}_{k-1}$ is
a linear map called {\em boundary map} satisfying to
$\partial \circ \partial = \partial^2 =0$.  We have the following
terminology:
\begin{itemize}
\item[(i)] $\mathcal{M}_{-2}=\{0\}$;
\item[(ii)] $\mathcal{M}_{-1}=\mathbb{Z}$ is the {\em accumulator
    module}, generated by $1$, which is called {\em accumulator};
\item[(iii)] $\mathcal{M}_k$ is {\em the module of $k$--chains},
  defined as a free $\mathbb{Z}$--module generated over a set
  $\mathcal{C}_k = \{ C^k_j \}_{j\in J}$ of so-called {\em $k$--cells}. An
  element of $\mathcal{M}_k$ is called a {\em $k$--chain}, thus we
  write:
  \begin{equation}
    \label{module_M_k}
    \mathcal{M}_{k} =  \left\{ C^k=\sum_{j\in J} \lambda_j C^k_j : \lambda_j \in \mathbb{Z}, C^k_j \in \mathcal{C}_k \right\}.
  \end{equation}
\end{itemize}
The lower index spaces of $k$--cells have special terminology:
\begin{itemize}
\item[(a)] $\mathcal{C}_0 = \{p_j=C^0_j \}_{j\in J}$ is called the set of
  {\em points};
\item[(b)] $\mathcal{C}_1 = \{ \ell_j=C^1_j \}_{j\in J}$ is called the
  set of {\em lines};
\item[(c)] $\mathcal{C}_2 = \{v_j=C^2_j \}_{j\in J}$ is called the set
  of {\em planes}.
\end{itemize}
Using this notation, we write, for example:
\begin{equation}
  \label{module_M_0}
  \mathcal{M}_{0} = \left\{ \sum_{j\in J} \lambda_j p_j : \lambda_j \in \mathbb{Z}, p_j \in \mathcal{C}_0 \right\}.
\end{equation}
In conclusion, we define a script as follows.
\begin{definition}
  A {\em script} is a complex of free modules $\mathcal{M}_k$  over $\Z$ of type: 
\begin{align}
  \label{def:script1}
  0 \longleftarrow \mathbb{Z} \overset{\partial}{\longleftarrow} \mathcal{M}_{0}
  \overset{\partial}{\longleftarrow} \mathcal{M}_{1} \overset{\partial}{\longleftarrow}\mathcal{M}_{2}
  \overset{\partial} {\longleftarrow} \cdots 
\end{align}
generated by the spaces of $k$--chains $\mathcal{C}_k$ together with
the boundary map $\partial$ at each level.  The {\em dimension of the
  script} is the largest $n$ for which $\mathcal{M}_n\neq \emptyset$. If $\mathcal{M}_n\neq \emptyset$ for all $n$, then the script is said to be an {\em infinite script}.
\end{definition}

\section{Immediate examples}

\begin{example}[Addition on $\mathbb{Z}$]
  \label{example:addition_Z}
  Consider the $0$--dimensional script
  \begin{align}
    \label{addition_Z_complex}
    0 \longleftarrow \mathbb{Z} \overset{\partial}{\longleftarrow} \mathcal{M}_{0}
  \end{align}
  with $\partial p_j =1$ for all $p_j \in \mathcal{C}_0$. In this case we have
  $\displaystyle\partial\left( \sum_{j\in J} \lambda_j p_j \right) =
  \sum_{j\in J} \lambda_j,$ so it represents the usual addition in
  $\mathbb{Z}$.
\end{example}

\begin{example}[An interval]
  \label{example:interval}
  The following $1$--dimensional script
  \begin{align}
    \label{interval_complex}
    0 \longleftarrow \mathbb{Z} \overset{\partial}{\longleftarrow} \mathcal{M}_{0} \overset{\partial}{\longleftarrow} \mathcal{M}_{1}
  \end{align}
  where $\mathcal{C}_0 = \{ p, q\}, \mathcal{C}_1 = \{ \ell \},$
  $\partial p = \partial q =1,$ and $\partial \ell = p-q$, represents
  an interval.
\end{example}

\begin{example}[Circles, spheres, etc.]
  \label{example:circles_spheres}
  The $1$--dimensional script
  \begin{align}
    \label{circle_complex}
    0 \longleftarrow \mathbb{Z} \overset{\partial}{\longleftarrow} \mathcal{M}_{0}
    \overset{\partial}{\longleftarrow} \mathcal{M}_{1}
  \end{align}
  with $\mathcal{C}_0 = \{ p_1, p_2 \}$,
  $\mathcal{C}_1 = \{ \ell_1, \ell_2 \}$, $\partial p_j =1$, and
  $\partial \ell_j = p_1-p_2, ~j=1,2$, represents a circle.  The
  extension of this script to
  \begin{align}
    \label{sphere_complex}
    0 \longleftarrow \mathbb{Z} \overset{\partial}{\longleftarrow} \mathcal{M}_{0}
    \overset{\partial}{\longleftarrow} \mathcal{M}_{1} \overset{\partial}{\longleftarrow} \mathcal{M}_{2}
    \end{align}
    with $\mathcal{C}_0, \mathcal{C}_1$ as before,
    $\mathcal{M}_2 = \{ v_1, v_2 \},$ and extra relations
    $\partial v_j = \ell_1- \ell_2, ~j=1,2,$ represents a $2$--sphere
    in an elementary form.

    In general, the extension
    \begin{align}
      \label{m_sphere_complex}
      0 \longleftarrow \mathbb{Z} \overset{\partial}{\longleftarrow} \mathcal{M}_{0}
      \overset{\partial}{\longleftarrow} \mathcal{M}_{1} \overset{\partial}{\longleftarrow}
      \cdots \overset{\partial}{\longleftarrow} \mathcal{M}_{k} \overset{\partial}{\longleftarrow}
      \cdots \overset{\partial}{\longleftarrow} \mathcal{M}_{m}
    \end{align}
    where $\mathcal{C}_k = \{ C^k_1, C^k_2 \}$ and
    $\partial C^k_j = C^{k-1}_2 - C^{k-1}_1$ represents a $m$--sphere.
\end{example}

\begin{example}[Simplexes]
  \label{example_simplexes}
  Regular simplexes are a special case of scripts: consider the sets
  of $k$--cells (points, lines, etc.) as follows:
 \begin{align*}
 \mathcal{C}_0&= \{ [0], [1], \cdots, [m] \}, \\
  \mathcal{C}_1&= \{ [i,j], i,j =0, \cdots, m \}, \\
  \vdots \\
  \mathcal{C}_k&= \{ [\alpha_0, \cdots, \alpha_k]: 0\leq \alpha_0 <  \cdots < \alpha_k \leq m\}, \\
    \vdots \\
  \mathcal{C}_m &= \{ [0, 1, \cdots, m] \}
  \end{align*}
 and define the boundary map by:
 \begin{align}
   \label{simplex_boundary}
   \partial [\alpha_0, \cdots, \alpha_k] = \sum_{j=0}^k (-1)^{j} [\alpha_0, \cdots, \alpha_k]_j^{\hat ~}
   = \sum_{j=0}^k (-1)^{j} [\alpha_0, \cdots, \alpha_{j-1},  \alpha_{j+1}, \cdots, \alpha_k].
 \end{align}
 It is easy to see that the script defined this way represents a
 regular simplex.
\end{example}

\section{The geometrical offprint of a script}

\begin{definition}
  Let $C^k = \displaystyle\sum_{j\in J} \lambda_j C^k_j$ be a general
  $k$--chain.  The {\em support} of a single $k$--cell is
  itself and, in general, it is 
  denoted by
  \begin{align*}
    \supp{C^k} = \{C_j^k \,\big|\, \lambda_j \not=0 \}.
  \end{align*}
  Moreover, we denote by $\rb C^k =$ \mbox{supp} $\partial C^k$ the
  so--called {\em geometrical boundary} of the chain $C^k.$
\end{definition}

Therefore there are natural maps
\begin{align}
  \label{geo_offprint}
  \{ 1 \} \overset{\rb}{\longleftarrow}  {\mathcal{P}}(\mathcal{C}_0) \overset{\rb}{\longleftarrow} {\mathcal{P}}(\mathcal{C}_1)
  \overset{\rb}{\longleftarrow} {\mathcal{P}}(\mathcal{C}_2) \overset{\rb}{\longleftarrow}
  \cdots \overset{\rb}{\longleftarrow} {\mathcal{P}}(\mathcal{C}_k)  \overset{\rb}{\longleftarrow} \cdots
\end{align}
which represent so--to--speak the {\em geometrical offprint of the
  script}.

\section{Subscripts}
\begin{definition}
  Consider a script~\eqref{def:script} and let
  \begin{equation}
    \label{Eq:1.011}
    0 \longleftarrow \Z \overset{\partial^\prime}{\longleftarrow} \mathcal{M}^\prime_{0}
    \overset{\partial^\prime}{\longleftarrow} \mathcal{M}_{1}^\prime \overset{\partial^\prime} {\longleftarrow}
    \cdots 
  \end{equation}
  be another script for which
  \begin{equation}
    \label{Eq:1.012}
    \mathcal{C}_k' \subset \mathcal{C}_k, \quad k=0, 1, \ldots 
  \end{equation}
  and such that if $C^{'k}_j \in \mathcal{C}_k'$ then
  $\rb C^{'k}_j \subset \mathcal{C}_{k-1}'$ and
  $\partial' C^{'k}_j =\partial C^{'k}_j$, for all $k=0, 1, \ldots$
  Then we call this new script a {\em subscript} of the original
  script.
\end{definition}

In particular for a $k$--cell $C^k_j$ we may consider
\begin{equation}
  \label{Eq:1.013}
  \mathcal{C}_k' = \{ C^{k}_j \}, \quad \mathcal{C}_{k-1}' =  \rb  C^{k}_j, \quad \dots, \quad\mathcal{C}_{k-l}' =  \rb^l  C^{k}_j,\quad \dots
\end{equation}
then the corresponding script
\begin{equation}\label{Eq:1.014}
  0 \overset{\partial}{\longleftarrow} \mathbb{Z} \overset{\partial}{\longleftarrow} \mathcal{M}_{0}(\mathcal{C}^\prime_0)
  \overset{\partial}{\longleftarrow} \mathcal{M}_{1}(\mathcal{C}^\prime_1)\cdots
  \overset{\partial}{\longleftarrow} \{ \lambda C^k_j\,\big|\,\lambda \in \mathbb{Z}\}
\end{equation}
is called the \textit{subscript generated by} $\{ C^k_j \}.$

More general, for a subset $A \subset \mathcal{C}^k$ we may consider
the subscript for which
\begin{equation}\label{Eq:1.015}
  \mathcal{C}^\prime_k = A, \quad \mathcal{C}^\prime_{k-1} = \rb  A
  := \bigcup_j \rb C^k_j, \, C^k_j \in A, \dots, \mathcal{C}^\prime_{k-l} = \rb^l  A.
\end{equation}
This is called the {\em subscript generated by} $A.$

\begin{example}
  \label{Ex:1.0005}
  A subscript of a symplex is called a simplicial complex. It can be
  generated by a subset $A$ of a $k$--dimensional subsimplexes of the
  overall $m$--symplex $[0,\dots, m].$
\end{example}

\section{Equivalent scripts}

Consider a cell $C^k_j \in \mathcal{C}_k$ and replace $\mathcal{C}_k$
by
$\mathcal{C}_k^{\prime} = (\mathcal{C}_k \setminus \{ C^k_j \}) \cup
\{ C'^{\,k}_j \}, $ where we set $C'^{\,k}_j = \pm C^{k}_j$ and
$\partial C'^{\,k}_j = \pm \partial C^{k}_j,$ and whenever
$C^{k}_j \in \rb C^{k+1}_l$ and
$$
\partial C^{k+1}_l = \lambda_j C^k_j + \sum_{i\neq j} \lambda_i C^k_i
$$
we replace $\partial C^{k+1}_l$ by
$\pm \lambda_j C'^{\,k}_j +\displaystyle\sum_{i\neq j} \lambda_i C^k_i$.

Then the newly obtained script is called an {\em equivalent script}.
Note that, by iteration, this definition includes permutation of
indices as well since it just corresponds to changing the names of objects.  Clearly, the $\rb$--maps for equivalent scripts are essentially the same and they have the same geometrical offprint. The converse is usually not true.

\section{Unitary scripts}

\begin{definition}
  A script is called {\em unitary} if for every $k$--cell $C_j^k,$ the
  boundary map
  $\partial C^{k}_j = \displaystyle\sum_i \lambda_i C^{k-1}_i$ only
  involves the values $\lambda_i = \pm 1$ (that is, whenever
  $\lambda_i \not=0$ ).
\end{definition}
Given a candidate for the geometrical offprint~\eqref{geo_offprint}
of a unitary script
\begin{equation}
  \label{Eq:1.017}
  0 \overset{\partial}{\longleftarrow} \mathbb{Z} \overset{\partial}{\longleftarrow} \mathcal{M}_{0}
  \overset{\partial}{\longleftarrow} \mathcal{M}_{1} \overset{\partial}{\longleftarrow} \cdots
\end{equation}
it may happen that any other unitary script with the same geometrical offprint is equivalent to this script.  In those cases the script is determined by its geometrical offprint up to equivalence.  This property motivates the previous definition of equivalence of scripts. 

In fact a unitary cell $C^k_j$ has a boundary $\partial C^k_j$ that
can be seen as a surface $\rb C^k_j$ with an orientation on it. More
general, we may also consider {\em unitary chains}
$C^k = \displaystyle\sum_j \lambda_j C^k_j$, with $\lambda_j = \pm 1.$

\section{Cycles, boundary, and homology}

\begin{definition}
  A $k$--chain $C^k \in \mathcal{M}_k(\mathcal{C}_k)$ that is {\em
    closed}, i.e. $\partial C^k=0$, is called a $k$--{\em cycle}. By
  $\mathcal{Z}_k({\mathcal{C}_k})$ we denote the module of all
  $k$--cycles.

  A $k$--cycle $C^k \in \mathcal{Z}_k(\mathcal{C}_k)$ is called a
  $k$--{\em boundary} if for some
  $C^{k+1} \in \mathcal{M}_{k+1}(\mathcal{C}_{k+1})$ we have
  $C^k = \partial C^{k+1}$. By $\mathcal{B}_k(\mathcal{C}_k)$ we
  denote the module of $k$--boundaries.  The $k$--th homology space of
  the script is given by
  \begin{align}
    \label{k_homology_space}
    \mathcal{H}_k(\mathcal{C}_k) := \mathcal{Z}_k(\mathcal{C}_k) /
    \mathcal{B}_k(\mathcal{C}_k).
  \end{align}
\end{definition}

One can also define local modules: let
$\mathcal{U} \subset \mathcal{C}_k$, then
$\mathcal{M}_k(\mathcal{U}), \mathcal{Z}_k(\mathcal{U}),
\mathcal{B}_k(\mathcal{U}),$ and $\mathcal{H}_k(\mathcal{U})$ denote
the modules of $k$--chains, $k$--cycles, $k$--boundaries, and
$k$--homology of $\mathcal{U}$, respectively.

One can also define relative homology. For that we extend the boundary
$\rb$ (which up to now is only defined for $k$--cells and sets of
$k$--cells) to $k$--chains.  For
$C^k \in \mathcal{M}_k(\mathcal{C}_k)$ we set
\begin{equation}
  \label{Eq:1.018}
  \rb(C^k) := \supp(\partial C^k) \subset \mathcal{C}_{k-1}.
\end{equation}
Next, let $\mathcal{U} \subset \calC_k, \mathcal{V} \subset \calC_{k-1},$ then
by $\mathcal{Z}_k(\mathcal{U}, \mathcal{V})$ we denote the module of
$k$--chains $C^k \in \mathcal{M}_k(\mathcal{U})$ for which
$\rb(C^k) \subset \mathcal{V}$ or also $\partial C^k \in \mathcal{B}_{k-1}(\mathcal{V}).$

By $\mathcal{U}_{\mathcal{V}}$ we denote the subset of $k$--cells
$C^k_j \in \mathcal{U}$ for which $\rb(C^k_j) \subset \mathcal{V}$ and
we denote by $\mathcal{B}_k(\mathcal{U}, \mathcal{V})$ the module of
$k$--chains $C^k \in \mathcal{Z}_k(\mathcal{U}, \mathcal{V})$ of the
form
\begin{equation}
  \label{Eq:1.019}
  C^k = C'^k+C''^k, \quad  C'^k \in \mathcal{B}_k(\mathcal{U}),
  C''^k \in \mathcal{M}_k(\mathcal{U}_{\mathcal{V}}).
\end{equation}
Clearly, also
$\mathcal{M}_k(\mathcal{U}_{\mathcal{V}}) \subset
\mathcal{Z}_k(\mathcal{U}, \mathcal{V})$.  By
$\mathcal{H}_k(\mathcal{U}, \mathcal{V}) = \mathcal{Z}_k(\mathcal{U},
\mathcal{V}) / \mathcal{B}_k(\mathcal{U}, \mathcal{V})$ we denote the
homology module of $\mathcal{U}$ relative to $\mathcal{V}.$ In this
way everything is naturally defined and above all, crystal clear.

\section{Other rings}

We presented the theory of scripts over the ring $\mathbb{Z}$ of
integers. Sometimes it will be useful to allow more values like the
field of rational numbers $\mathbb{Q}$ (e.g. to study invertible
morphisms). Moreover, one can also consider the scripts over other
rings like $\mathbb{Z}/n\Z$ ($n \in \mathbb{Z}$), or polynomials.

For any script over $\mathbb{Z}$
$$
0 \longleftarrow \mathbb{Z} \overset{\partial}{\longleftarrow}
\mathcal{M}_{0} \overset{\partial}{\longleftarrow} \mathcal{M}_{1}
\overset{\partial}{\longleftarrow} \cdots
$$
we can consider the script over $\mathbb{Z}/n\Z$:
\begin{equation}
  \label{Eq:1.020}
  0 \longleftarrow \mathbb{Z}/n\Z
  \overset{\partial_n}{\longleftarrow} \Pi_n(\mathcal{M}_{0}) \overset{\partial_n}{\longleftarrow}
  \Pi_n(\mathcal{M}_{1}) \overset{\partial_n}{\longleftarrow} \cdots
\end{equation}
whereby $\Pi_n : \mathbb{Z} \rightarrow \mathbb{Z}/n\Z$ is the natural
projection and $\partial_n = \Pi_n \circ \partial.$

In case $n=3$, $\Pi_3$ leaves unitary scripts invariant because then
$\Pi_3(\partial C^k_j) = \partial C^k_j$ for every $k$--cell $C^k_j$.
Moreover, every script over $\mathbb{Z}/3\Z$ is by definition unitary
and $n=3$ is the lowest case for which every cell has 2 states of
orientation.

We note that not all scripts over $\mathbb{Z}/n\Z$ ($n\ge 4$) are
unitary. We will see later the script for the Klein bottle (\ref{fig:klein}) is
not unitary.

Yet one can also consider $\mathbb{Z}/2\Z$ and the projection $\Pi_2$.
In this case orientability is no longer an issue and in fact every
$k$--chain has the form $C^k = \displaystyle\sum_{j \in A} C^k_j,$
$A \subset \mathcal{C}_k,$ so that the map
$C^k \rightarrow \supp {C^k}$ from $\mathcal{M}_k(\mathcal{C}_k)$ to
$\mathcal{P}(\mathcal{C}_k)$ is bijective. In particular, for every
cell $C^k_j$, its boundary $\partial C^k_j$ is mapped bijectively on
$\rb (C^k_j)$ and so there is a one--to--one correspondence between a
$\mathbb{Z}/2\Z$--script and its geometrical offprint.

Also, for every $C^k \in \mathcal{M}_k$, the boundary $\partial C^k$
may be identified with $\rb C^k = \supp{\partial C^k}$. Moreover,
$$
\rb (\supp{C^k}) = \bigcup_j \rb (C^k_j),\quad  C^k_j \in \supp{C^k}
$$
and one obtains:
$$
\rb (\supp{C^k}) = \bigcup_j \supp(\partial C^k_j),
$$
(with $C^k_j \in \mbox{supp } C^k$) which is bigger than:
$$
\rb (C^k)=\supp(\partial C^k)=\supp{}\left(\partial\sum_j\lambda_j
  C^k_j\right) =\supp{}\left(\sum_j\lambda_j \partial C^k_j\right).
$$
Thus, we have the following:

\begin{lemma}
  $\rb(C^k)\subset \rb(\mbox{supp } C^k)$. In general, the inclusion
  is strict.
\end{lemma}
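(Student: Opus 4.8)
The plan is to reduce the stated inclusion to the elementary observation that the support of a $\Z$--linear combination of chains is contained in the union of the supports of its summands. Concretely, I would write $C^k=\sum_{j}\lambda_j C^k_j$ with $\lambda_j\in\Z$ and expand each cell boundary as $\partial C^k_j=\sum_i\mu_{ij}\,C^{k-1}_i$, so that $\rb C^k_j=\supp{(\partial C^k_j)}=\{\,C^{k-1}_i : \mu_{ij}\neq 0\,\}$. By $\Z$--linearity of $\partial$,
\[
\partial C^k=\sum_i\Bigl(\sum_j\lambda_j\mu_{ij}\Bigr)C^{k-1}_i .
\]
Then I would pick any cell $C^{k-1}_i\in\rb(C^k)=\supp{(\partial C^k)}$; by definition its coefficient $\sum_j\lambda_j\mu_{ij}$ is nonzero, so there is at least one index $j$ with $\lambda_j\mu_{ij}\neq 0$, and for that $j$ one has $\lambda_j\neq 0$ (hence $C^k_j\in\supp{C^k}$) together with $\mu_{ij}\neq 0$ (hence $C^{k-1}_i\in\rb(C^k_j)$). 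Since $\rb(\supp{C^k})=\bigcup_{C^k_j\in\supp{C^k}}\rb(C^k_j)$ by the definition of $\rb$ on a set of cells (see~\eqref{Eq:1.015}), this gives $C^{k-1}_i\in\rb(\supp{C^k})$, and therefore $\rb(C^k)\subset\rb(\supp{C^k})$.

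For the strictness assertion, the plan is to exhibit a chain whose boundary cancels. I would take the elementary circle of Example~\ref{example:circles_spheres}, where $\mathcal{C}_1=\{\ell_1,\ell_2\}$, $\mathcal{C}_0=\{p_1,p_2\}$ and $\partial\ell_1=\partial\ell_2=p_1-p_2$, and set $C^1:=\ell_1-\ell_2$. Then $\partial C^1=(p_1-p_2)-(p_1-p_2)=0$, so $\rb(C^1)=\emptyset$, whereas $\supp{C^1}=\{\ell_1,\ell_2\}$ forces $\rb(\supp{C^1})=\rb(\ell_1)\cup\rb(\ell_2)=\{p_1,p_2\}\neq\emptyset$. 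Thus the inclusion is proper in this example, and by the same mechanism it is proper whenever two distinct cells of $\supp{C^k}$ contribute cancelling multiples of some $(k-1)$--cell.

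I do not expect a real obstacle here: the entire content sits in the ``support of a sum is contained in the union of the supports'' step, and the only points that need a little care are phrasing that step at the level of coefficients and quoting the already-available description of $\rb$ on a set of cells. The reverse inclusion fails in general precisely because coefficients in $\Z$ can cancel, which is exactly what the circle example makes visible.
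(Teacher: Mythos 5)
Your proof is correct and follows essentially the same route as the paper: the displayed computation preceding the lemma reduces everything to the fact that $\supp\left(\sum_j\lambda_j\,\partial C^k_j\right)\subset\bigcup_j\supp(\partial C^k_j)=\rb(\supp{C^k})$, which is exactly your coefficient argument. Your explicit circle example witnessing strictness is a welcome addition, since the paper asserts strictness without exhibiting a cancelling chain.
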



Whereby, it is best to not fully identify $C^k$ with $\supp{C^k}$.
However, for a unitary script over $\mathbb{Z},$ the operator $\Pi_2$
may be identified with the projection on the geometrical offprint, or
skeleton; $\Pi_2$ is a kind of R\"ontgen image.

\section{Clifford algebra}

A way to encode simplexes is given by a Clifford algebra of the
appropriate dimension as follows.  Consider $m+1$ points and attach to
them the $m+1$ basis elements $\e_0, \cdots, \e_m$ generating the
Clifford algebra $\mathbb{R}_{m+1,0}$ with relations
\begin{equation}
  \label{Eq:1.021}
  \e_j \e_k + \e_k \e_j = 2\delta_{j,k}, \quad j,k \in \{ 0, \dots, m\}.
\end{equation}
Then every basis element of $\mathbb{R}_{m+1,0}$ has the form
\begin{equation}
  \label{Eq:1.022}
  \e_A = \e_{j_0} \cdots \e_{j_k}, \quad A = \{ j_0, \dots,  j_k \}, ~s.t. ~ 1\leq j_0 < \cdots <  j_k \leq m.
\end{equation}
We now identify basis elements with symplexes
\begin{equation}
  \label{Eq:1.023}
  \e_A = \e_{j_0} \cdots \e_{j_k} \quad \rightarrow \quad [A]= [ j_0, \dots,  j_k ].
\end{equation}
Then any $k$--vector $\displaystyle\sum_{|A|=k} \lambda_A \e_A$ is
mapped isomorphically on the $k$--chain
$\displaystyle\sum_{|A|=k} \lambda_A [A] \in \mathcal{M}_k
(\mathcal{C}_k).$

Next, let $\e = \e_0+\e_1 + \cdots +\e_m;$ then
\begin{equation}
  \label{Eq:1.024}
  \e \cdot \e_A := \left(\sum_{j=0}^m \e_{j} \right) \cdot \e_{A}
  = [\e ~ \e_A]_{k-1} \longrightarrow \partial [ A] = \partial [ j_0, \dots,  j_k ].
\end{equation}
This is called the Clifford algebra representation of simplicial
complexes which it turns out to be very useful.  Note that here
$\cdot$ denotes the inner product, not the regular Clifford product and
the boundary operator is well defined.


\chapter{Geometrical properties of scripts}

\section{Minimization}
General complexes are too general for the sake of their intrinsic
geometries and there are a number of elementary properties one may
assume.  We say that a cell $C^k_j$ is in {\em minimal state} if
$$
\partial C^k_j = \sum_{l} \lambda_j^l C^{k-1}_l
$$
with $\gcd_l (\lambda_j^l) =1$ (where $\gcd_l$ is the usual greatest
common divisor w.r.t. the index $l$).

\begin{proposition}\label{Lm:2.001}
  Every complex has a canonical minimization.
\end{proposition}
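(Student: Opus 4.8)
The plan is to produce, from an arbitrary complex of free $\Z$-modules together with its distinguished bases of cells $\calC_k$, a new complex on the \emph{same} generating sets, obtained by rescaling the generators, with the rescaling factors chosen recursively so that every cell ends up in minimal state simultaneously.

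First I would record the obstruction that dictates the recursion. If one naively replaces a single cell $C^k_j$, with $\partial C^k_j=\sum_l\lambda_j^l C^{k-1}_l$, by $C^k_j/\gcd_l(\lambda_j^l)$, then the relation $\partial\partial C^{k+1}_l=0$ forces the coefficient of $C^k_j$ inside every $\partial C^{k+1}_l$ to be multiplied by that same gcd; this can spoil minimality one level up, and the defect cascades. Hence the rescaling must be propagated coherently through the whole complex.

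Accordingly I would define scaling factors $e^k_j\in\N$ recursively, starting from $e^{-1}:=1$ on the accumulator, by
\[
  e^k_j := \gcd_l\bigl(e^{k-1}_l\,\lambda_j^l\bigr)
\]
(with the convention $e^k_j:=1$ when $\partial C^k_j=0$), and let the minimized complex $(\mathcal{M}',\partial')$ have the same generating sets $\calC_k$ and boundary
\[
  \partial' C^k_j := \frac{1}{e^k_j}\sum_l e^{k-1}_l\,\lambda_j^l\, C^{k-1}_l .
\]
Three short checks then complete the construction: (i) the coefficients $e^{k-1}_l\lambda_j^l/e^k_j$ are integers, immediate since $e^k_j$ is by definition their gcd; (ii) $\partial'\circ\partial'=0$, obtained by substituting the definition twice, cancelling each factor $e^{k-1}_l$ against $1/e^{k-1}_l$, and recognizing the surviving double sum as $\tfrac1{e^k_j}\,\partial(\partial C^k_j)=0$; and (iii) every $C^k_j$ is now in minimal state, since $\gcd_l\bigl(e^{k-1}_l\lambda_j^l/e^k_j\bigr)=e^k_j/e^k_j=1$. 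I would also note that $\phi_k(C^k_j):=e^k_j C^k_j$ defines an injective chain map from the original complex into $(\mathcal{M}',\partial')$, so the original sits inside its minimization as the subcomplex of scaled cells, and that on an already-minimal complex all $e^k_j=1$, making the operation the identity and hence idempotent.

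Finally, for the word \emph{canonical}, I would show the factors are forced. If $(\mathcal{M}'',\partial'')$ is any minimal complex on the same generating sets admitting a chain map from $(\mathcal{M},\partial)$ that multiplies each $C^k_j$ by a nonzero scalar $f^k_j$, normalised to $1$ on the accumulator, then integrality of $\partial''$ gives $f^k_j\mid f^{k-1}_l\lambda_j^l$ for every $l$ and minimality gives $\gcd_l\bigl(f^{k-1}_l\lambda_j^l/f^k_j\bigr)=1$, i.e.\ $f^k_j=\gcd_l\bigl(f^{k-1}_l\lambda_j^l\bigr)$; induction upward from level $-1$ then forces $f^k_j=e^k_j$ up to units, so the minimization is unique up to the equivalence of scripts introduced earlier. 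The only genuine obstacle is the one flagged above — arranging the rescaling so that repairing one level does not damage the next — and once the $e^k_j$ are defined to carry the gcd's upward, everything else is routine verification.
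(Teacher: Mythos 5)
Your proof is correct and follows essentially the same route as the paper's: a bottom-up recursion that divides each cell by the gcd of its already-rescaled boundary coefficients, with the cascade you flag handled exactly as in the paper by propagating the factors upward level by level; your $e^k_j$ are simply the closed form of the paper's iterative rescaling, and your checks of integrality, $\partial'\circ\partial'=0$ and minimality fill in details the paper leaves implicit. The one point of divergence is the floating cells: where $\partial C^k_j=0$ the paper \emph{deletes} the cell and drops it from every $\partial C^{k+1}_s$, whereas your convention $e^k_j=1$ retains it, so on complexes containing such cells your output differs from the paper's and those cells are still not in minimal state in the paper's sense (their gcd is $0$, not $1$).
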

\begin{proof}
  Assume we already minimized
  $\mathcal{C}_0, \dots,\mathcal{C}_{k-1},$ and let
  $C^k_j \in \mathcal{C}_k.$ If $\partial C^k_j =0$ we remove $C^k_j$
  from $\mathcal{C}_k$ and also from any $\partial C^{k+1}_s$ in which
  it occurs. Let $g =\gcd_l (\lambda_j^l) >1$ then replace $C^k_j$ by
  $C^{\prime k}_j =\displaystyle \frac{1}{g}C^k_j$ in $\mathcal{C}_k$
  and by $g C^{\prime k}_j$ in any $\partial C^{k+1}_s$ where it
  occurs. Note that this may change $C^{k+1}_s$ from minimal to
  non--minimal.
\end{proof}

Without too much loss of generality one may hence assume scripts to be
minimal. Unitary scripts \textit{are already} minimal.

\section{The skeleton problem}

Let
$$
0 \longleftarrow \mathbb{Z}
\overset{\partial}{\longleftarrow} \mathcal{M}_{0}
\overset{\partial}{\longleftarrow} \mathcal{M}_1
\overset{\partial}{\longleftarrow} \cdots
$$
be a minimal script and let
\begin{equation}\label{Eq:1.025}
  \cdots  \overset{\rb}{\longleftarrow} \mathcal{P}(\mathcal{C}_{k-1})
  \overset{\rb}{\longleftarrow} \mathcal{P}(\mathcal{C}_{k}) \overset{\rb}{\longleftarrow} \cdots
\end{equation}
be its skeleton (or geometrical offprint). In general there may exist other scripts with the
same skeleton. This leads to the following:

\textbf{Problem:} When does it happen that a skeleton \eqref{Eq:1.025}
corresponds to a unique {\em minimal} script (up to equivalence)?

\begin{remark}
  In what follows we may assume that $\mathcal{C}_k$ has no redundant
  cells, i.e. cells $C^k_j$ that do not appear in any
  $\rb(C^{k+1}_j)$.  In this case the skeleton has the form
  \begin{equation}
    \label{Eq:1.026}
    \mathcal{P}(\mathcal{C}_m)  \overset{\rb}{\longrightarrow}  \mathcal{P}(\rb(\mathcal{C}_{m}))
    =  \mathcal{P}(\mathcal{C}_{m-1}) \overset{\rb}{\longrightarrow}   \mathcal{P}(\rb^2(\mathcal{C}_{m}))
    =  \mathcal{P}(\mathcal{C}_{m-2}){\longrightarrow} \cdots
  \end{equation}
\end{remark}

The notion of tightness provides an answer to this problem.
\section{Tight scripts: definitions}

\begin{definition}
  Let $\mathcal{U} \subset \mathcal{C}_k$; then $\mathcal{U}$ is
  called {\em set tight} and we write {\em $s$--tight} if
  $\mathcal{Z}_k(\mathcal{U})$ is generated by a single cycle $C^k$.
  By definition such a cycle will be minimal.

  A cycle $C^k$ is called {\em cycle tight} and we write {\em
    $c$--tight }if $\supp{C^k}$ is $s$--tight and $C^k$ also generates
  $\mathcal{Z}_k(\supp{C^k})$. In this case $C^k$ is minimal and
  $\supp{C^k}$ is $s$--tight.

  A single cell $C^k_j$ is called {\em tight} if
  $\mathcal{Z}_k(\rb C^k_j)$ is $s$--tight and generated by
  $\partial C^k_j$. The interested reader will see that this means
  that $C^k_j$ is minimal.
\end{definition}

\begin{definition}
  A script is {\em tight} if and only if each of its cells is tight.
\end{definition}

\begin{definition}
  Let $\mathcal{U} \subset \mathcal{C}_k$ and $\mathcal{V}  \subset \mathcal{C}_{k-1}$ then
  $\mathcal{U}$ is {\em $s$--tight relative} to $\mathcal{V}$ if
  $\mathcal{Z}_k(\mathcal{U}, \mathcal{V})$ is generated by a single
  chain $C^k.$

  A chain $C^k$ is called {\em $c$--tight } if $C^k$ generates
  $\mathcal{Z}_k(\supp{C^k}, \rb (C^k))$ i.e. $\supp{C^k}$ is
  $s$--tight relative to $\rb(C^k)$ and $C^k$ is minimal.
 \end{definition}

 \begin{remark}
   We use the same notation in the two definitions since in the case
   where a chain $C^k$ is a cycle we have that $\rb(C^k)=\emptyset$
   and the definitions agree.
 \end{remark}

\section{Elementary properties of tight scripts}

First note that in a minimal script we may assume that for every point
$p \in \mathcal{C}_0, \partial p = 1$ so that
$\partial : \mathcal{M}_0 \rightarrow \mathbb{Z}$ corresponds to
integration (summation).

We prove the following structure theorem for cells of dimension $1$ in tight scripts:

\begin{theorem}\label{Lm:2.002}
  In a tight script every line $\ell \in \mathcal{C}_1$ may be
  interpreted as an oriented line from a point $p$ to another point
  $q$, i.e. $\partial \ell = q-p, ~p, q \in \mathcal{C}_0.$
\end{theorem}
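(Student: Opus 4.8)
The plan is to unwind the definition of tightness at the bottom dimension $k=1$ and cross it with the elementary structure of $0$-cycles. First I would fix a line $\ell\in\mathcal{C}_1$ and translate ``$\ell$ is tight'' into the statement that the cycle module $\mathcal{Z}_0(\rb\ell)$ of the point set $\rb\ell=\supp(\partial\ell)\subset\mathcal{C}_0$ is generated by the single cycle $\partial\ell$ (which is a cycle since $\partial\partial\ell=0$, and is supported on $\rb\ell$ by the very definition of $\rb$). Since in a tight script every cell is tight, hence minimal, I would first normalise using the observation at the start of this section: after passing to an equivalent script we may assume $\partial p=1$ for every point $p$, so that $\partial\colon\mathcal{M}_0\to\mathbb{Z}$ is ``sum of coefficients''; minimality also forbids $\partial\ell=0$.

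Next I would pin down the cardinality $n=\#\rb\ell$. Writing $\mathcal{M}_0(\rb\ell)\cong\mathbb{Z}^{n}$, the restriction of $\partial$ is the surjection $(\mu_1,\dots,\mu_n)\mapsto\sum_i\mu_i$, so $\mathcal{Z}_0(\rb\ell)$ is free of rank $n-1$. A free $\mathbb{Z}$-module is cyclic only when its rank is at most $1$ (most cleanly: tensor with $\mathbb{Q}$ and compare dimensions), so tightness of $\ell$ forces $n\le 2$; on the other hand $\partial\ell\neq 0$ gives $n\ge 1$, and $n=1$ is impossible since then $\partial\partial\ell$ would equal the nonzero coefficient of the single point. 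Hence $n=2$; write $\rb\ell=\{p,q\}$.

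Finally, $\mathcal{Z}_0(\{p,q\})=\mathbb{Z}\,(q-p)$, so $\partial\ell=c\,(q-p)$ for some $c\in\mathbb{Z}$, and since $\partial\ell$ must generate this module we get $c=\pm 1$; replacing $\ell$ by $-\ell$ if needed (an equivalence of scripts) yields $\partial\ell=q-p$, which is the assertion, the word ``interpreted'' in the statement absorbing exactly these sign and relabelling normalisations.

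I expect the only real friction to be the bookkeeping of the ``up to equivalence'' normalisations (fixing $\partial p=1$ and the orientation of $\ell$), the explicit exclusion of the degenerate case $\partial\ell=0$ via minimality, and phrasing the implication ``generated by one cycle $\Rightarrow$ rank $\le 1$'' rigorously over $\mathbb{Z}$; none of these is deep, so the argument should be short, the genuine content being simply that being generated by a single $0$-cycle caps $\#\rb\ell$ at two.
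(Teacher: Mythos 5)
Your proposal is correct and follows essentially the same route as the paper's proof: a case analysis on the cardinality of $\rb\,\ell$, excluding $\partial\ell=0$ and the singleton case, ruling out three or more points because $\mathcal{Z}_0(\rb\,\ell)$ would then need more than one generator, and concluding $\partial\ell=\pm(q-p)$ from generation of $\mathcal{Z}_0(\{p,q\})$. Your rank computation over $\mathbb{Q}$ and the explicit $c=\pm1$ step merely make rigorous what the paper states more briefly.
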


\begin{proof} The case $\partial \ell =0$ is pathological and the case
  $\rb \ell = \{ p \}$ does not occur since
  $\mathcal{Z}_0(\{ p \}) =0.$ Also in case
  $\rb \ell = \{p, q, r, \ldots \},$ $\mathcal{Z}_0(\rb \ell)$ has at
  least 2 generators $r-q$ and $q-p.$ Therefore we must have that
  $\rb \ell = \{p, q \},$ where $p, q \in \mathcal{C}_0, p\not=q,$ and
  $\mathcal{Z}_0(\{p, q \})$ is obviously generated by $q-p.$
\end{proof}

In the case of $2-$cells, we first define the notion of {\em polygon}:

\begin{definition}\label{def:2.001}
  Let $\ell_1, \cdots, \ell_n$ be $n$ distinct lines for which
  $\partial \ell_j = p_j-p_{j-1}, j=1, \dots, n-1, \partial \ell_n =
  p_0-p_{n-1},$ for some set
  $\{p_0, p_1, \dots, p_{n-1} \} \subset \mathcal{P}_0$ of distinct
  points. Then the cycle $\ell_1+\ell_2+\cdots+\ell_n$ is called an
  {\em $n-$polygon}, $n\geq 2.$
\end{definition}

The following structure theorem for $2-$cells in tight scripts
follows:

\begin{theorem}\label{Th:2.001}
  Let $v \in \mathcal{C}_2$ be  tight $2-$cell; then there
  exists $n\ge 2$ such that $\partial v = \ell_1+\ell_2+\cdots+\ell_n$
  is an $n-$polygon.
\end{theorem}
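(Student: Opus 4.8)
The strategy is to translate the tightness hypothesis on $v$ into combinatorial constraints on $\rb v$, then bootstrap the degree-$1$ structure theorem (Theorem~\ref{Lm:2.002}) to organize the lines of $\partial v$ into a single closed path. By definition, $v$ tight means $\mathcal{Z}_1(\rb v)$ is $s$--tight, i.e.\ generated by the single cycle $\partial v$, and $v$ is minimal, so $\partial v$ is a unitary $1$--cycle (coefficients $\pm 1$; after possibly re-orienting the lines via passage to an equivalent script we may assume each appearing $\ell_j$ has coefficient $+1$). Write $\partial v = \ell_1 + \cdots + \ell_n$ with the $\ell_j$ distinct. Each $\ell_j$ sits inside the tight script, so Theorem~\ref{Lm:2.002} applies to it: $\partial \ell_j = q_j - p_j$ for points $p_j \neq q_j$.

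The heart of the argument is the cycle condition $\partial(\partial v) = 0$, i.e.\ $\sum_{j=1}^n (q_j - p_j) = 0$ in $\mathcal{M}_0$. First I would argue every point of $\rb(\partial v) \subset \mathcal{C}_0$ is an endpoint of at least two of the $\ell_j$: if some point $x$ occurred as an endpoint of exactly one line, its coefficient in $\sum (q_j - p_j)$ would be $\pm 1 \neq 0$, contradicting closedness. Next I would show each endpoint is hit by \emph{exactly} two lines, one as a head and one as a tail. Suppose not: then one can find a proper nonempty subset of the $\ell_j$ whose boundaries already cancel (a shorter sub-cycle), or one can split off such a sub-cycle; this produces a cycle in $\mathcal{Z}_1(\rb v)$ that is not a $\mathbb{Z}$--multiple of $\partial v$ (it uses strictly fewer lines, all with coefficient $+1$), contradicting $s$--tightness of $\rb v$. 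Thus the $2n$ endpoints, counted with multiplicity, pair up so that every vertex has exactly one incoming and one outgoing line — the underlying multigraph on $\rb(\partial v)$ is a disjoint union of directed cycles. A second application of $s$--tightness (a disjoint union of $\ge 2$ directed cycles would give $\ge 2$ independent generators of $\mathcal{Z}_1(\rb v)$) forces it to be a single directed cycle. Relabelling the points as $p_0, p_1, \dots, p_{n-1}$ around this cycle and the lines accordingly gives exactly $\partial \ell_j = p_j - p_{j-1}$ for $j = 1, \dots, n-1$ and $\partial \ell_n = p_0 - p_{n-1}$; the points are distinct because each appears exactly once as a tail. Finally $n \ge 2$: if $n = 1$ then $\partial v = \ell_1$ with $\partial \ell_1 = p_0 - p_0 = 0$, excluded since a single line has $p \neq q$; and $n = 0$ would mean $\partial v = 0$, excluded by minimality of $v$. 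Hence $\partial v$ is an $n$--polygon in the sense of Definition~\ref{def:2.001}.

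The step I expect to be the main obstacle is the passage \emph{``no proper sub-cycle''} $\Rightarrow$ \emph{``single directed cycle''}. One must be careful that the combinatorial sub-cycles extracted from the endpoint-multigraph genuinely live in $\mathcal{Z}_1(\rb v)$ (they do, since their supports are subsets of $\supp{\partial v} \subset \rb v$) and that they are genuinely \emph{not} scalar multiples of $\partial v$ — this uses that $\partial v$ is unitary and that a proper sub-multigraph uses strictly fewer lines, so any relation $c \cdot (\text{sub-cycle}) = \partial v$ is impossible for $c \in \mathbb{Z}$. A minor subtlety is the orientation normalization at the outset: a priori $\partial v = \sum_j \lambda_j \ell_j$ with $\lambda_j \in \{\pm 1\}$, and one should either absorb the signs by replacing each $\ell_j$ with $\lambda_j \ell_j$ (legitimate in an equivalent script, and the statement is about tight scripts up to equivalence) or carry the signs through the endpoint-counting argument, where the $\pm 1$ coefficients still force the same vertex-degree conclusions.
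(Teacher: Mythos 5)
Your route is genuinely different from the paper's: you argue globally, via degree counting on the support graph of $\rb v$ and a decomposition of the closed chain $\partial v$ into directed cycles, whereas the paper grows a path line by line from an arbitrary starting line, using $\partial^2 v=0$ to keep extending and tightness to forbid premature $2$--gons, until a polygon closes up and tightness identifies it with $\pm\partial v$. Both strategies are viable. However, your very first step contains a genuine gap: you assert that minimality of $v$ makes $\partial v$ unitary, i.e.\ that all coefficients $\lambda_j$ in $\partial v=\sum_j\lambda_j\ell_j$ are $\pm 1$. Minimality only says $\gcd_j(\lambda_j)=1$; it does not exclude, say, $\partial v=2\ell_1+3\ell_2-\ell_3$. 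In fact, unitarity of tight $2$--cells is precisely the content of Corollary~\ref{Cor:2.001}, which the paper deduces \emph{from} Theorem~\ref{Th:2.001}, so assuming it at the outset is circular. The gap propagates: your ``degree exactly two'' step extracts a proper sub-cycle ``with all coefficients $+1$'', and that extraction (an Eulerian-type splitting) presupposes the unit coefficients.

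The gap is repairable inside your own framework without ever invoking unitarity. Your degree $\ge 2$ argument survives for arbitrary nonzero $\lambda_j$, since a pendant line contributes $\pm\lambda_j\neq 0$ to $\partial^2 v$. For the rest, replace the sub-cycle extraction by a rank count: $\mathcal{Z}_1(\rb v)$ is the cycle module of the graph $G$ with edges $\rb v$ and vertices $\rb^2 v$, a free $\mathbb{Z}$--module of rank $E-V+c$. If some vertex had degree $\ge 3$ then $2E\ge 2V+2$, so the rank is at least $1+c\ge 2$; if all degrees equal $2$ but $c\ge 2$ the rank is $c\ge 2$; in either case the module cannot be generated by the single element $\partial v$, contradicting tightness. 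Hence $G$ is one simple closed circuit, its cycle module is generated by the corresponding polygon $P$, and $\partial v=\lambda P$; only at this point does minimality give $\lambda=\pm1$, and reorienting lines in an equivalent script yields the stated form. Your remaining steps (distinctness of the points, $n\ge 2$) are fine as written.
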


\begin{proof}
  Pick $\ell_1 \in \rb(v),$ with $\partial \ell_1 = p_1-p_0.$ Then
  there exist a point $p_2\neq p_1$ and a line $\ell_2 \not= \ell_1$
  for which $\partial \ell_2 = p_2-p_1$ (otherwise, we would have
  $p_1 \in \mbox{supp }(\partial\partial v) ).$

  If $p_2=p_0$ we have a $2-$gon inside $\rb (v).$ If $p_2\neq p_0$
  then $p_2 \notin \{ p_0, p_1 \}$ and there exist $p_3\neq p_2$ and
  $\ell_3$ for which $\partial \ell_3 = p_3-p_2$, $\ell_3\neq
  \ell_2$. We also have that $\ell_3\neq\ell_1$ since
  $p_2\notin\{p_0,p_1\}$.  Now, if $p_3 \in \{ p_0, p_1 \}$ the
  tightness condition requires that $p_3=p_0$ (otherwise $\ell_2$ and
  $\ell_3$ will form a $2-$gon). In this case
  $\{ \ell_1, \ell_2, \ell_3 \}$ is a $3-$gon.

  If $p_3 \notin \{ p_0, p_1, p_2 \}$ we repeat the process and there
  exist $p_4\neq p_3$ and $\ell_4 \notin \{ \ell_1, \ell_2, \ell_3 \}$
  with $\partial \ell_4 = p_4-p_3,$ and the proof follows inductively.

  After finitely many steps we create a polygon
  $\ell'_1+\ell'_2+\cdots+\ell'_n$ inside $\rb (v).$ Due to tightness,
  $\ell'_1+\ell'_2+\cdots+\ell'_n$ generates $\mathcal{Z}_1(\rb (v))$
  or $=\pm \partial v.$
\end{proof}  

\begin{corollary}\label{Cor:2.001}
  A tight $2-$dimensional script is always unitary.
\end{corollary}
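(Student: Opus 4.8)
The plan is to unwind the definition of \emph{unitary}: we must check that for every cell of the script the boundary map has only coefficients $\pm1$. Since the script has dimension $2$, we have $\mathcal{M}_k=\{0\}$ for $k\ge 3$, so there is nothing to verify in those degrees, and the argument splits into three cases according to whether the cell is a point, a line, or a plane. We first record that a tight script is automatically minimal: the tightness of a cell $C^k_j$ forces $\partial C^k_j$ to generate the cycle module of $\rb C^k_j$, hence the greatest common divisor of its coefficients is $1$, i.e. $C^k_j$ is in minimal state. In particular the normalisation $\partial p = 1$ for every point $p\in\mathcal{C}_0$ (available in any minimal script, as noted at the start of this section) applies, so all $0$--cells are unitary.

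Next, the $1$--cells. Here I would simply invoke Theorem~\ref{Lm:2.002}: in a tight script every line $\ell$ satisfies $\partial\ell = q-p$ for points $p,q\in\mathcal{C}_0$, so the coefficients occurring in $\partial\ell$ are $\pm1$ and every line is unitary.

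For the $2$--cells, I would use Theorem~\ref{Th:2.001}: for every $v\in\mathcal{C}_2$ there is an $n\ge2$ such that $\partial v$ agrees, up to an overall sign, with an $n$--polygon $\ell_1+\ell_2+\cdots+\ell_n$. A polygon has all its coefficients equal to $+1$, so $\partial v = \pm(\ell_1+\cdots+\ell_n)$ has coefficients $\pm1$, and hence $v$ is unitary. Combining the three cases, every cell of the script is unitary, which is precisely the assertion of the corollary.

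The only point requiring a little care — more a matter of bookkeeping than a genuine obstacle — is the $2$--cell case: the proof of Theorem~\ref{Th:2.001} produces a polygon cycle \emph{inside} $\rb(v)$ that either generates $\mathcal{Z}_1(\rb v)$ or equals $\pm\partial v$, and one must conclude that in all cases $\partial v$ itself is $\pm1$ times a polygon. This follows from minimality of $\partial v$ together with the identity $\supp(\partial v)=\rb v$: if the polygon's support were a proper subset of $\rb v$, then dividing by its multiplicity inside $\mathcal{Z}_1(\rb v)$ would give a generator of $\partial v$ with strictly smaller support, contradicting $\supp(\partial v)=\rb v$; so the polygon exhausts $\rb v$ and the multiplicity is $\pm1$. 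Everything else is a direct appeal to Theorems~\ref{Lm:2.002} and~\ref{Th:2.001} and the absence of cells of dimension greater than $2$.
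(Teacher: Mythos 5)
Your argument is correct and follows essentially the same route as the paper, which simply cites Theorem~\ref{Th:2.001} to conclude that every $2$--cell has an $n$--polygon as boundary and is therefore unitary; your additional treatment of the points and lines (via minimality and Theorem~\ref{Lm:2.002}) and the bookkeeping about the overall sign make explicit what the paper leaves implicit, but introduce no new idea.
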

\begin{proof}
  Following the previous theorem, any two cell will have an
  $n-$polygon as boundary. Therefore the script is unitary.
\end{proof}

Tight scripts provide a solution to the skeleton problem (\ref{Eq:1.025}):

\begin{theorem}\label{Th:2.002} Let
  \begin{equation}\label{Eq:1.0027}
    0 \overset{\partial}{\longleftarrow} \mathbb{Z} \overset{\partial}{\longleftarrow} \mathcal{M}_{0} \overset{\partial}{\longleftarrow} \cdots
  \end{equation}
  be a tight script with skeleton:
  \begin{equation}\label{Eq:1.0028}
    \cdots  \overset{\rb}{\longleftarrow} \mathcal{P}(\mathcal{C}_{k-1})
    \overset{\rb}{\longleftarrow} \mathcal{P}(\mathcal{C}_{k}) \overset{\rb}{\longleftarrow} \cdots .
  \end{equation}
  Then any minimal script with the same skeleton is equivalent to the
  original script.
\end{theorem}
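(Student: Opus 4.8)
The plan is to build the equivalence between the two scripts cell by cell, working up the dimension filtration, and at each dimension use tightness to pin down the boundary map up to sign. So let $\mathcal{S}$ be the given tight script and $\mathcal{S}'$ a minimal script with the same skeleton \eqref{Eq:1.0028}; in particular $\mathcal{C}_k = \mathcal{C}'_k$ as sets for every $k$, and $\rb C^k_j = \rb' C^k_j$ for every cell. I would argue by induction on $k$ that, after replacing $\mathcal{S}'$ by an equivalent script, the boundary maps $\partial$ and $\partial'$ agree on $\mathcal{C}_k$. The base case $k=0$ is handled by the remark opening Section~2.4: in a minimal script we may assume $\partial p = \partial' p = 1$ for every point $p \in \mathcal{C}_0$, so the two agree on $\mathcal{M}_0$ already.

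For the inductive step, suppose $\partial = \partial'$ on $\mathcal{C}_0, \dots, \mathcal{C}_{k-1}$, and pick a cell $C^k_j \in \mathcal{C}_k = \mathcal{C}'_k$. Since both scripts have the same skeleton we have $\rb C^k_j = \rb' C^k_j =: \mathcal{V} \subset \mathcal{C}_{k-1}$. Now $\partial C^k_j$ is a $(k-1)$--chain supported on $\mathcal{V}$ with $\partial\partial C^k_j = 0$, hence $\partial C^k_j \in \mathcal{Z}_{k-1}(\mathcal{V})$; likewise $\partial' C^k_j \in \mathcal{Z}_{k-1}(\mathcal{V})$, where this cycle module is computed with the boundary of level $k-1$, which is the same for both scripts by the inductive hypothesis. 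Since $\mathcal{S}$ is tight, the cell $C^k_j$ is tight, which by definition means $\mathcal{Z}_{k-1}(\rb C^k_j) = \mathcal{Z}_{k-1}(\mathcal{V})$ is $s$--tight and generated by the single cycle $\partial C^k_j$. Therefore $\partial' C^k_j = \lambda\, \partial C^k_j$ for some $\lambda \in \mathbb{Z}$, and because $\mathcal{S}'$ is minimal the chain $\partial' C^k_j$ must itself be minimal, forcing $\lambda = \pm 1$ (the case $\lambda = 0$ is excluded: it would make $C^k_j$ redundant in $\mathcal{S}'$, contradicting that $\rb' C^k_j = \mathcal{V} \neq \emptyset$, or one removes it, but then the skeletons no longer match). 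We then perform an equivalence move on $\mathcal{S}'$: replace $C^k_j$ by $\pm C^k_j$ so that $\partial' C^k_j = \partial C^k_j$, adjusting the coefficient of $C^k_j$ in every $\partial' C^{k+1}_l$ accordingly. Doing this for each cell of $\mathcal{C}_k$ (the sign choices are independent across distinct $k$--cells) yields an equivalent script with $\partial' = \partial$ on $\mathcal{C}_k$, completing the induction.

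Finally, one observes that the countably many equivalence moves used across all dimensions compose into a single well-defined equivalence of scripts: each move only rescales one cell by $\pm 1$ and alters coefficients in the next level up, so the moves at level $k$ do not interfere with those already made at levels $<k$, and the total effect is a legitimate equivalence in the sense of Section~1.5 (possibly composed with a permutation of indices, which is also an equivalence). Hence $\mathcal{S}'$ is equivalent to $\mathcal{S}$, as claimed.

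The step I expect to be the main obstacle is the careful bookkeeping of the equivalence moves: one must verify that re-signing a $k$--cell $C^k_j$ does not disturb the already-achieved agreement of the boundary maps at lower levels (it does not, since $\partial$ of a $k$--cell lands in $\mathcal{M}_{k-1}$ and the move changes no $(k-1)$--cell), and that performing infinitely many such moves (if the script is infinite) still produces a bona fide equivalence. A secondary subtlety is ruling out $\lambda = 0$ cleanly, which is why one wants the normalization that scripts have no redundant cells (the remark in Section~2.2) together with the hypothesis of matching skeletons.
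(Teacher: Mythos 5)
Your proposal is correct and follows essentially the same route as the paper: induction on the dimension, using tightness of each cell $C^k_j$ to conclude that $\mathcal{Z}_{k-1}(\rb C^k_j)$ is generated by $\partial C^k_j$, so that any other minimal boundary supported there is $\pm\partial C^k_j$, which is absorbed by an equivalence move. The paper's own proof is terser (it does not spell out the exclusion of $\lambda=0$ or the compatibility of the successive sign changes), so your additional bookkeeping is a welcome elaboration rather than a divergence.
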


\begin{proof}
  This clearly holds for
  $0 \overset{\partial}{\longleftarrow} \mathbb{Z}
  \overset{\partial}{\longleftarrow} \mathcal{M}_{0}.$ Assume the
  property for
  $$
  0 \overset{\partial}{\longleftarrow} \mathbb{Z}
  \overset{\partial}{\longleftarrow}
  \mathcal{M}_{0}\overset{\partial}{\longleftarrow} \cdots
  \overset{\partial}{\longleftarrow} \mathcal{M}_{k-1}
  $$
  and let $C^k_j \in \mathcal{C}_k.$ Since the script is tight, we
  have that $\mathcal{Z}_{k-1}(\rb C^k_j)$ has one generator (up to
  sign). Let us call this generator $\sum_l \lambda_l C^{k-1}_l,$ then
  we can choose $\partial C^k_j = \lambda \sum_l \lambda_l C^{k-1}_l$
  and that fixes
  $\mathcal{M}_{k-1} \overset{\partial}{\longleftarrow}
  \mathcal{M}_{k}$ because $\lambda=\pm1$ when $\partial C^k_j$ is
  minimal.
\end{proof}

We expect the converse to be true as well, we leave the proof to the
interested student.

Tight scripts also solve the {\em assignment problem}. Suppose given
$$
0 \overset{\partial}{\longleftarrow} \mathbb{Z}
\overset{\partial}{\longleftarrow}
\mathcal{M}_{0}\overset{\partial}{\longleftarrow} \cdots
\overset{\partial}{\longleftarrow} \mathcal{M}_{k-1}
$$
and for $C^k_j$ we also know
$\rb C^k_j = \{ C^{k-1}_l, \, \text{some} \,\, l's \}.$ How to
actually find the coefficients $\lambda_l$ for which $\partial C^k_j$
eventually equals $\sum_l \lambda_l C^{k-1}_l$?

First of all, one $\lambda_l$ may be freely chosen. Next, one has the
equation $0 = \partial^2 C^k_j = \sum_l \lambda_l \partial C^{k-1}_l$
which for a tight cell has a unique solution up to a constant. As we
need a cell to be minimal, the constant is $\pm 1.$ Therefore the
solution is unique up to sign hence the script obtained is unique up
to equivalence.

\begin{definition}\label{Def:2.002}
  Let $\ell_1, \cdots, \ell_n$ be $n$ distinct lines for which
  $\partial \ell_1 = p_1-p, \partial \ell_j = p_j-p_{j-1}, j=2,
  \cdots, n-1, \partial \ell_n = q-p_{n-1},$ whereby
  $\{p, p_0, p_1, \cdots, p_{n-1}, q \} \subset \mathcal{P}_0$ are
  distinct points. Then the chain $\ell_1+\ell_2+\dots+\ell_n$ is
  called a {\em simple curve} from $p$ to $q$ with length $n.$
\end{definition}

\begin{theorem}\label{Th:2.003}
  Let $\ell$ be a tight one-chain of length $n$ inside a tight script;
  then $\ell$ is either a cycle or a simple curve of same length $n$
  between two points.
\end{theorem}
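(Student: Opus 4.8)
The plan is to recast $\ell$ as a chain on a finite graph and then extract its shape from the two ways tightness can be used. First I would dispose of the trivial alternative: if $\partial\ell=0$ then $\ell$ is a $1$-cycle and there is nothing to prove, so assume from now on that $\partial\ell\neq 0$ and write $\ell=\sum_{j=1}^{n}\lambda_j\ell_j$ with distinct lines $\ell_j$ and all $\lambda_j\neq 0$. Since the ambient script is tight, Theorem~\ref{Lm:2.002} gives $\partial\ell_j=q_j-p_j$ with $p_j\neq q_j$ for every $j$, so $\supp{\ell}$ may be viewed as the edge set of a finite loop-free graph $G$ with vertex set $V=\rb(\supp{\ell})$, every vertex of which has degree at least $1$.

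Second, I would show that $G$ is a forest. Any graph-cycle in $G$ would produce a nonzero $1$-chain $C_0$ with $\supp{C_0}\subset\supp{\ell}$ and $\partial C_0=0$, hence $C_0\in\mathcal{Z}_1(\supp{\ell},\rb(\ell))$. As $\ell$ is a $c$-tight chain, $\mathcal{Z}_1(\supp{\ell},\rb(\ell))=\Z\ell$, so $C_0=m\ell$ for some integer $m\neq 0$; then $m\,\partial\ell=\partial C_0=0$ forces $\partial\ell=0$, a contradiction. Hence $\mathcal{Z}_1(\supp{\ell})=0$, i.e. $G$ is a forest. Here I will also invoke the elementary fact that the boundary operator is \emph{injective} on the $1$-chains of a forest (peel off a leaf edge).

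The counting step is the heart of the matter. Set $\mathcal{V}=\rb(\ell)=\supp{\partial\ell}$ and $W=V\setminus\mathcal{V}$. Then $\mathcal{Z}_1(\supp{\ell},\rb(\ell))$ is precisely the kernel of the map that sends a $1$-chain $C$ supported on $\supp{\ell}$ to the restriction of $\partial C$ to the points of $W$; tightness says this kernel equals $\Z\ell$, of rank $1$, so its image has rank $n-1$ and hence $|W|\ge n-1$. A forest with $n$ edges, $c$ components and no isolated vertices has $|V|=n+c$, so $|\mathcal{V}|\le c+1$. On the other hand, within each component (a tree) the numbers $(\partial\ell)_v$ sum to $0$, while $\partial$ is injective on that tree and $\ell$ is nonzero on it, so the component carries at least two vertices of $\mathcal{V}$; summing over components gives $|\mathcal{V}|\ge 2c$. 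Combining, $2c\le c+1$, so $c=1$ and $|\mathcal{V}|=2$: thus $G$ is a tree and $\partial\ell=d(q-p)$ for two points $p,q$ and some $d\neq 0$. Letting $\rho$ be the chain along the unique $p$-$q$ path in the tree, oriented from $p$ to $q$, we get $\partial(\ell-d\rho)=0$, hence $\ell=d\rho$ by injectivity; since $\supp{\ell}$ is all of $G$, the tree must be exactly that path, say $p=v_0,v_1,\dots,v_n=q$. After reorienting each $\ell_j$ (an equivalence of scripts) so that $\partial\ell_j=v_j-v_{j-1}$, we obtain $\ell=d(\ell_1+\cdots+\ell_n)$, and minimality of $\ell$ forces $|d|=1$; reversing all orientations if $d=-1$, we conclude $\ell=\ell_1+\cdots+\ell_n$ is a simple curve of length $n$ between two points.

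The step I expect to be the main obstacle is this counting inequality: one has to see simultaneously that tightness bounds $|\mathcal{V}|$ from above (via the rank-$1$ kernel) and that the flow-balance on each component bounds it from below, and that the two bounds pinch $G$ down to a single path. Everything else is either a direct appeal to Theorem~\ref{Lm:2.002} or the standard linear algebra of incidence maps of forests.
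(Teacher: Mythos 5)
Your proof is correct, and it shares the paper's overall framework --- view $\supp{\ell}$ as a finite graph, use tightness of the ambient script (Theorem~\ref{Lm:2.002}) to make every line an edge between two distinct points, and use tightness of $\ell$ to control $\mathcal{Z}_1(\supp{\ell},\rb(\ell))$ --- but the decisive step is carried out quite differently. The paper argues qualitatively: it obtains connectedness by noting that a disconnected graph would split $\ell$ into two independent relative cycles, it caps the size of $\rb(\ell)$ at two by exhibiting, for three boundary points $p,q,r$, a $p$--$q$ path and a $q$--$r$ path that are visibly not proportional, and it then asserts from the single-generator property that the tree coincides with the $p$--$q$ path. You instead run a rank count on the incidence map: tightness forces the kernel of $\mathcal{M}_1(\supp{\ell})\to\Z^W$ to have rank one, so $|W|\ge n-1$, which combined with the forest identity $|V|=n+c$ and the flow-balance lower bound $|\mathcal{V}|\ge 2c$ pinches $c=1$ and $|\mathcal{V}|=2$ in a single stroke. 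Your route delivers connectedness and the two-endpoint bound simultaneously, and it makes the final identification ($\ell=d\rho$ with $|d|=1$, via injectivity of $\partial$ on forests together with the generation property) fully explicit where the paper leaves it implicit; the paper's route is shorter and more visibly geometric. Both arguments are valid.
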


\begin{proof}
  The chain $\ell$ together with $\mbox{supp } \ell$ and
  $\rb(\mbox{supp } \ell)$ defines a $1-$dimensional graph with lines
  in $\mbox{supp } \ell$ and points in $\rb(\mbox{supp } \ell).$ Every
  line connects 2 points. Since the script is tight, this graph must
  be connected or else $\mathcal{Z}_1(\mbox{supp } \ell, \rb (\ell))$
  wold have more than one generator.

  If $\ell$ is not a cycle, then the graph contains no loops, so it is
  actually a tree.

  Finally, should $\rb (\ell)$ have 3 points $p, q, r$ or more, then $p$
  can be connected to $q$ by a (simple) curve $\ell_1,$ and $q$ to $r$
  by another curve $\ell_2.$ Then the curves
  $\ell_1, \ell_2 \in \mathcal{Z}_1(\mbox{supp } \ell, \rb (\ell))$ are
  different, contradicting the tightness of $\ell$. Hence $\rb (\ell)$
  can have at most two points $\rb (\ell) = \{p, q \} (p\not=q)$ and
  since $\rb (\ell)$ is connected there exists a simple curve from $p$
  to $q$ inside the tree.  This would be the single generator of
  $\mathcal{Z}_1(\mbox{supp } \ell, \rb (\ell)),$ therefore the tree is
  this simple curve.
\end{proof}

\section{CW-complexes as Scripts}

A CW-complex is a Hausdorff space $X$ together with a partition of $X$
into open cells (of varying dimension) that satisfies two properties:
\begin{enumerate}[(i)]
\item for each $n-$ dimensional open cell $C$ there is a continuous
  map $f$ from the closed ball $\mathbb{B}\subset {\mathbb{R}^n}$ to
  $X$ such that
  \begin{enumerate}
  \item[(i.1)] the restriction of $f$ to $\overset{\circ}{\mathbb{B}}$
    is a homeomorphism onto cell $C;$
  \item[(i.2)] the image of the sphere $\partial \mathbb{B}$ is equal
    to the union of finitely many cells of dimension less than $n.$
  \end{enumerate}
\item A CW-complex is {\em regular} if the map $f$ is a homeomorphism
  on the closed ball.
\end{enumerate}

Next let $\mathcal{C}_k$ be the set of all $k-$dimensional cells in a
regular CW-complex; then for each $C^k_j \in \mathcal{C}_k$ we put
$\rb (C^k_j) = \{ C^{k-1}_l : C^{k-1}_l \subset f(\partial \mathbb{B})
\};$ we must have that
$$f(\partial \mathbb{B}) \subset \rb (C^k_j) \cup \rb^2 (C^k_j) \cup
\cdots \cup \rb^k (C^k_j).$$

In this way we obtain a skeleton in which every cell is basically a
$k-$dimensional polyhedron. Now, a polyhedron is always the skeleton
of a tight unitary script therefore the skeleton of a CW-complex is
the skeleton of a tight unitary script. This means that this tight
script is the only minimal script attached to this skeleton. So we
have proven:

\begin{theorem}\label{Th:2.004}
  To a given regular CW-complex corresponds a unique tight unitary
  script.
\end{theorem}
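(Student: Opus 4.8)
The plan is to realise the script by the (augmented) cellular chain complex of the regular CW-complex $X$, to check that regularity forces this complex to be unitary and tight, and then to read off uniqueness from Theorem~\ref{Th:2.002}.

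First I would fix an orientation for every cell of $X$, take $\mathcal{M}_k$ to be the free $\Z$--module on the set $\mathcal{C}_k$ of $k$--cells, let $\partial\colon\mathcal{M}_k\to\mathcal{M}_{k-1}$ be the usual cellular boundary for $k\ge1$, and let $\partial\colon\mathcal{M}_0\to\Z$ be the augmentation $\partial p=1$. This is a complex, i.e.\ $\partial^2=0$, and by construction its geometrical offprint is exactly the skeleton $\rb(C^k_j)=\{C^{k-1}_l : C^{k-1}_l\subset f(\partial\mathbb{B})\}$ attached to $X$ in the discussion preceding the statement. Because $X$ is \emph{regular}, the attaching map $f$ of each cell $C^k_j$ is a homeomorphism of the closed ball $\mathbb{B}$, so $\bar C^k_j$ and its boundary sphere $S_j:=f(\partial\mathbb{B})\cong S^{k-1}$ are honest subcomplexes; a standard consequence is that the matrix of $\partial$ in the cell bases has entries in $\{0,\pm1\}$, so the script is unitary, and since $\partial C^k_j\neq0$ for $k\ge1$ (for $k=1$ it is the difference of the two distinct endpoints of the edge; for $k\ge2$ it is the fundamental cycle of $S_j$), the script is also minimal.

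The hard part will be tightness: for each cell I must show that $\mathcal{Z}_{k-1}(\rb C^k_j)$ is generated by the single cycle $\partial C^k_j$. The subscript generated by $\rb C^k_j$ in the sense of~\eqref{Eq:1.015} is precisely the cellular chain complex of the $(k-1)$--sphere $S_j$; since $S_j$ carries no cells of dimension $k$, every $(k-1)$--cycle there is automatically non--bounding, hence
\[
  \mathcal{Z}_{k-1}(\rb C^k_j)=H_{k-1}(S_j)\cong H_{k-1}(S^{k-1})\cong\Z ,
\]
and $\partial C^k_j$, being the image of the generator of $H_k(\bar C^k_j,S_j)\cong\Z$ under the connecting homomorphism (equivalently, $\pm$ the fundamental cycle of $S_j$), is itself a generator. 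Thus every cell of the complex is tight, so we obtain a tight --- necessarily unitary --- script whose skeleton is that of $X$. This identification of the module of top cycles of the boundary sphere with $\Z$, together with the recognition of the cellular boundary as its generator, is where regularity and the sphere topology really enter; it is exactly the assertion, used informally in the sketch preceding the theorem, that a polyhedral sphere is the skeleton of a tight unitary script, and I expect its verification to be the only genuine obstacle, the rest being bookkeeping.

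For uniqueness, recall that a tight cell is minimal, so any tight unitary script with the skeleton of $X$ is in particular a \emph{minimal} script with that skeleton; Theorem~\ref{Th:2.002} then forces it to be equivalent to the one constructed above. Hence $X$ determines a tight unitary script, uniquely up to equivalence. Finally, the orientation choices made in the first step alter $\partial$ only by the sign flips permitted in the definition of equivalent scripts, so they do not affect this conclusion.
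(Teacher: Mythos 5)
Your proposal is correct and follows essentially the same route as the paper: realise the regular CW-complex as a script whose skeleton comes from the attaching maps, observe that each cell's boundary is a polyhedral $(k-1)$--sphere so that every cell is tight and unitary, and invoke Theorem~\ref{Th:2.002} for uniqueness among minimal scripts. You additionally supply the homological justification ($\mathcal{Z}_{k-1}(\rb C^k_j)\cong H_{k-1}(S^{k-1})\cong\Z$, generated by $\partial C^k_j$ via the connecting homomorphism) for the step the paper only asserts informally, namely that a polyhedron is always the skeleton of a tight unitary script.
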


The converse is not true as there exist tight unitary scripts that do
not correspond to a CW-complex. Clearly, every 2D-tight script is
CW-complex.

\section{A $2-$torus}

We have
 \begin{gather*}
   \mathcal{C}_{0}= \{ p_0, p_1, p_2, p_3 \}, \quad \partial p_j =1, j=0, 1, 2, 3.  \\
   \mathcal{C}_{1} = \{ \ell_1, \ell_2, \ell_3, \ell_4, \ell_5, \ell_6, \ell_7, \ell_8  \},   \\
   \partial \ell_1 = p_1-p_0, \quad \partial \ell_2 = p_0 -p_1, \quad \partial \ell_3 = p_2 -p_0, \quad \partial \ell_4 = p_0 -p_2   \\
   \partial \ell_5 = p_3-p_2, \quad \partial \ell_6 = p_2 -p_3, \quad \partial \ell_7 = p_3 -p_1, \quad \partial \ell_8 = p_1 -p_3   \\
   \mathcal{C}_{2} = \{ v_1, v_2, v_3, v_4  \},    \\
   \partial v_1 = \ell_5+\ell_8-\ell_1-\ell_4, \quad \partial v_2 = \ell_6+\ell_4-\ell_2-\ell_8,  \\
   \partial v_3 = \ell_1+\ell_7-\ell_5-\ell_3, \quad \partial v_4 = \ell_2+\ell_3-\ell_6-\ell_7, \\
   \mathcal{C}_{3}= \{ C \}, \quad \partial C =v_1+v_2+v_3+v_4.
\end{gather*}

Hence $\partial C =v_1+v_2+v_3+v_4$ is a unitary and tight script but is not the image of a sphere so its no CW-complex.


\begin{figure}[hb]
  \begin{center}
    \includegraphics[scale=0.4]{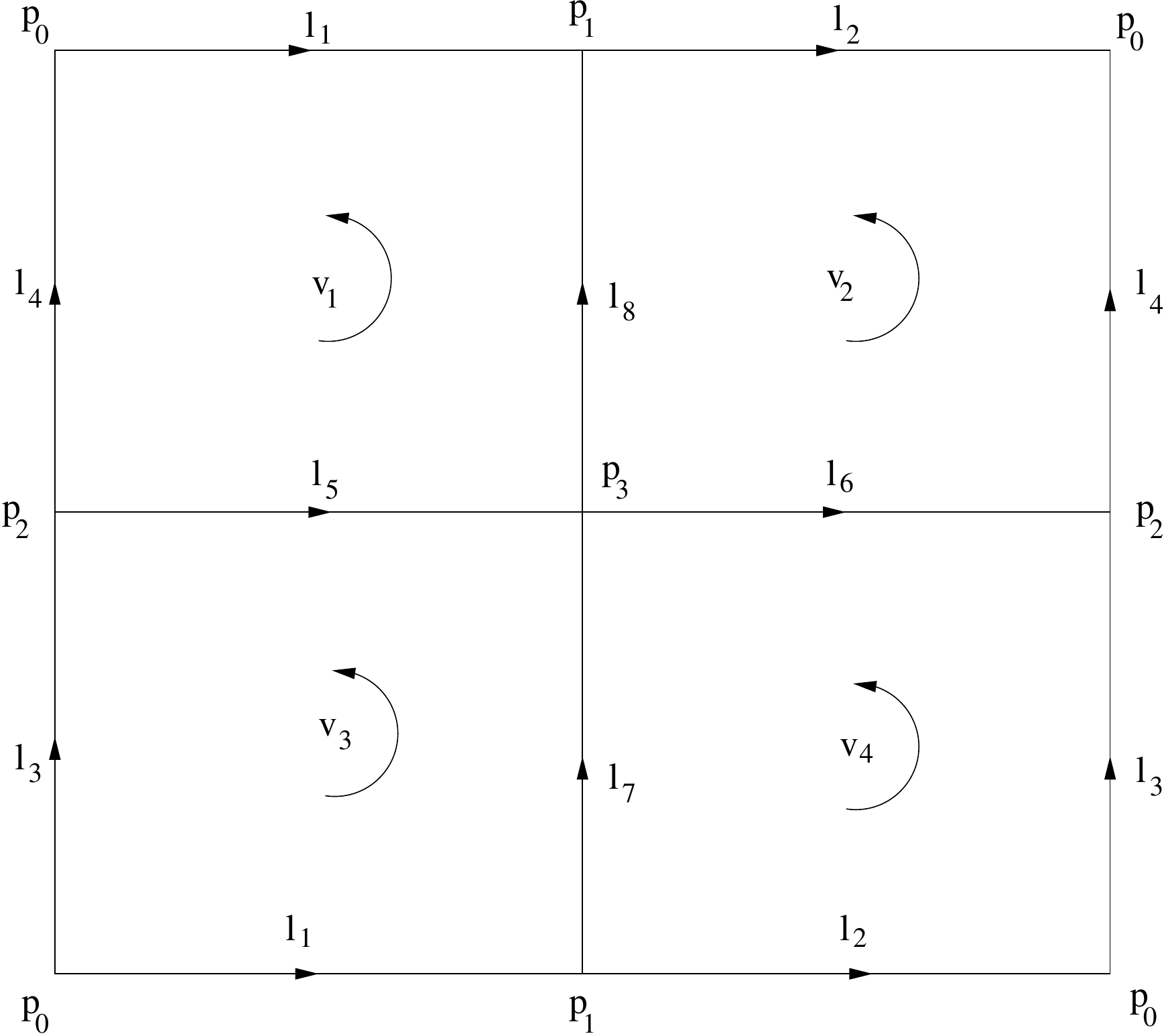}
    \caption{Script for the $2-$torus}
  \end{center} \label{fig:torus}
\end{figure}

\section{A Klein bottle}

We take
 \begin{gather*}
   \mathcal{C}_{0}= \{ p_0, p_1, p_2, p_3 \}, \quad \partial p_j =1, j=0, 1, 2, 3.  \\
   \mathcal{C}_{1} = \{ \ell_1, \ell_2, \ell_3, \ell_4, \ell_5, \ell_6, \ell_7, \ell_8  \},   \\
   \partial \ell_1 = p_1-p_0, \quad \partial \ell_2 = p_0 -p_1, \quad \partial \ell_3 = p_2 -p_0, \quad \partial \ell_4 = p_0 -p_2   \\
   \partial \ell_5 = p_3-p_1, \quad \partial \ell_6 = p_1 -p_3, \quad \partial \ell_7 = p_3 -p_2, \quad \partial \ell_8 = p_2 -p_3   \\
   \mathcal{C}_{2} = \{ v_1, v_2, v_3, v_4  \},    \\
   \partial v_1 = \ell_5+\ell_8-\ell_2-\ell_3, \quad \partial v_2 = \ell_6-\ell_1-\ell_4-\ell_8,  \\
   \partial v_3 = -\ell_1+\ell_3+\ell_7-\ell_5, \quad \partial v_4 =
   \ell_4-\ell_2-\ell_6-\ell_7.
\end{gather*}

\begin{figure}[H]
  \begin{center}\includegraphics[scale=0.4]{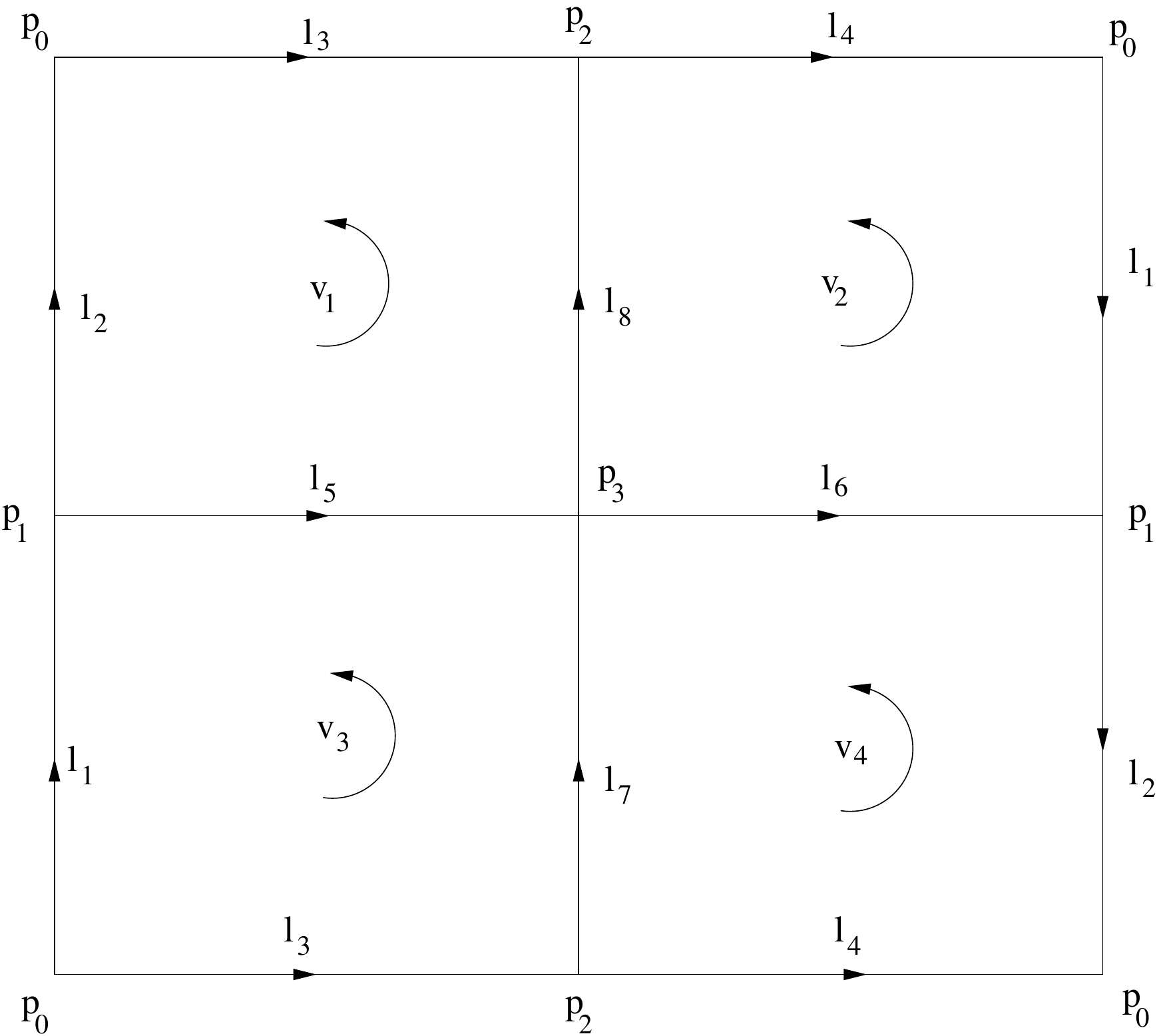}
    \caption{The Klein Bottle}
  \end{center} \label{fig:klein}
\end{figure}

Notice that
$$\partial v_1  + \partial v_2 + \partial v_3 + \partial v_4 = -2\ell_1 -2\ell_2.$$
So if we add a new cell $v_5, ~\partial v_5 = \ell_1 + \ell_2,$ to the
script we obtain a cycle for the "extended Klein bottle":
$$\partial v_1  + \partial v_2 + \partial v_3 + \partial v_4 + 2 \partial v_5 = 0.$$
Hence, $v_1  + v_2 +  v_3 + v_4 + 2  v_5 $ is a tight cycle but it is not unitary.

\section{A projective plane}

We take
\begin{gather*}
  \mathcal{C}_{0}= \{ p_1, p_2, p_3 \}, \quad \partial p_j =1, j=1, 2, 3.  \\
  \mathcal{C}_{1} = \{ \ell_1, \ell_2, \ell_3, \ell_4, \ell_5, \ell_6   \},   \\
  \partial \ell_1 = p_2-p_1, \quad \partial \ell_2 = p_1 -p_2, \quad \partial \ell_3 = p_3 -p_1, \quad \partial \ell_4 = p_1 -p_3   \\
  \partial \ell_5 = p_3-p_2, \quad \partial \ell_6 = p_2 -p_3,   \\
  \mathcal{C}_{2} = \{ v_1, v_2, v_3, v_4  \},    \\
  \partial v_1 = -\ell_2+\ell_5-\ell_3, \quad \partial v_2 = -\ell_1-\ell_4-\ell_5,  \\
  \partial v_3 = -\ell_1+\ell_3+\ell_6, \quad \partial v_4 =
  \ell_4-\ell_2-\ell_6.
\end{gather*}

\begin{figure}[H]
  \begin{center}\includegraphics[scale=0.4]{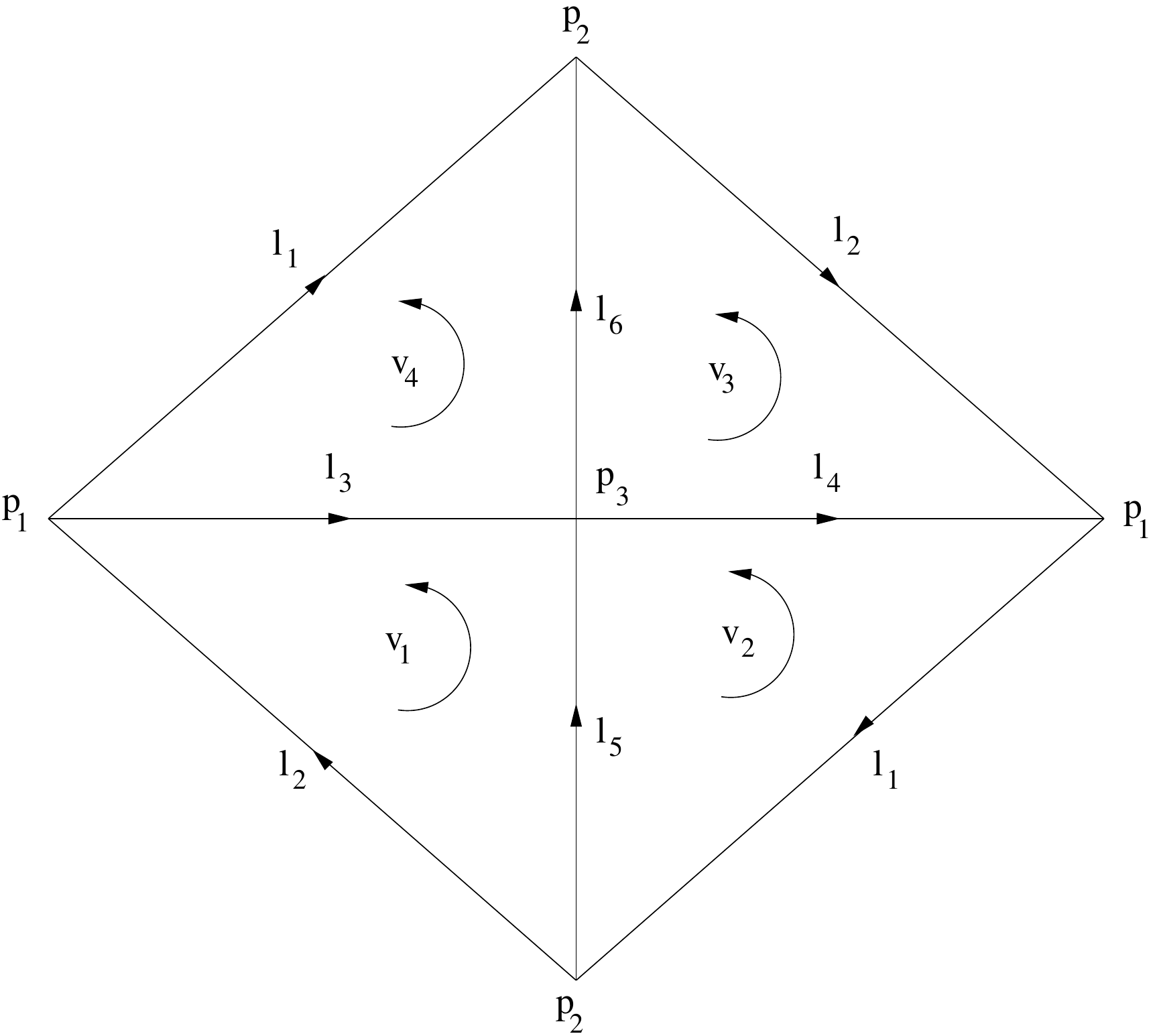}
    \caption{The Projective Plane}
  \end{center} \label{fig:proj}
\end{figure}

Also here
$\partial v_1 + \partial v_2 + \partial v_3 + \partial v_4 = -2\ell_1
-2\ell_2$ and we can add a fifth cell
$v_5, ~\partial v_5 = \ell_1 + \ell_2.$ Then we have a tight $2-$cycle which is not unitary: $v_1  + v_2 +  v_3 + v_4 + 2  v_5 $ with
$$\partial v_1  + \partial v_2 + \partial v_3 + \partial v_4 + 2 \partial v_5 = 0.$$

\textbf{Remarks:}\begin{enumerate}[i)]
\item Scripts can in fact always be made unitary by attaching extra
  cells. In the above example we can make an extra cell
  $v_6, ~\partial v_6 = \ell_1 + \ell_2.$ Then we get a unitary cycle
  $\partial v_1 + \partial v_2 + \partial v_3 + \partial v_4 +
  \partial v_5 + \partial v_6 = 0$ but that cycle is no longer
  tight. Also, using a Gauss method one may extend cells to make
  things tight, but that would generally not be unitary except on the
  level of the 2-cycles (boundaries of 3-cells).

  Hence, tight and unitary scripts are very special and deserve a new
  name. We define a tight and unitary script to be a {\em geometrical
    script} or \textit{geoscript}.

\item This section is not nearly complete and there are many
  interesting problems such as

  \textbf{Problem:} let $C$ be a tight chain; can one prove that
  $\partial C$ is also tight? In particular, if $C$ is a tight
  $2-$chain, can we prove that $\partial C$ is a polygon?

  The answer to both questions is an obvious no, as the union
  of two disjoint circles realized as the boundary of a script forming
  a cylinder seen as a single $2-$dimensional tight chain is a
  counterexample to both.

  This kind of problems will turn out quite important later on, in
  particular when we study extensions. But first we need more
  constructive methods.
\end{enumerate}

\textbf{Remark:} A cell $C^k_j$ is tight iff $\rb (C^k_j)$ has trivial
homology. Tightness hence means that the local homology at the level
of cells is trivial.  For manifolds this would correspond to a version
of Poincar\'e's lemma which means that each point has a neighborhood
with trivial homology.



\chapter{Basic Operations on Scripts}

An essential part of working with scripts is the possibility to modify them. Here we discuss three different types of operations on scripts, cleaning or removing operations, operations of creation, and operations of identification.

\section{Cleaning Operations}
There are two types of cleaning operators:
\begin{enumerate}
\item[1)] Floating Cells
\item[2)] Free arcs (or domes)
\end{enumerate}
Here are the definitions of the two:
\begin{definition}
  A {\em floating cell} is a cell for which $\partial C_j^k=0$. If it
  appears in $\partial C_l^{k+1}$ then $ C_l^{k+1}$ is usually not
  tight and one can in fact remove $C_j^k$ from the script by simple
  cancellation and replace it by $0$ if it appears in $C_l^{k+1}$.
\end{definition}

\begin{definition}
  A {\em free arc} is a cell $C_j^k$ that does not appear in any
  boundary $\partial C_l^{k+1}$. It may thus be removed from the
  script by simple cancellation.
\end{definition}

We will see from Chapter 5 that these operations are each other's dual. However,
removing floating cells may be essential on the way to a tight script
while removing free arcs is not and it could make the situation worse,
therefore we will not go through this process automatically.

Removing a free arc does not affect tightness, as tightness works from
a higher dimensional cell to a lower dimensional cell, therefore, as a
free arc does not appear in any boundary, removing it will not affect
tightness.

\section{Operations of Creation}
\begin{enumerate}
\item[1)] Creating New Cells
\item[2)] Pulling cells together
\end{enumerate}

One can {\em create new cells} through the following method. Let
$\mathcal{C}^{k-1}$ be a $k-1$ cycle, then we may add a new cell
$C_j^k$ to the script for which $\partial C_j^k=\mathcal{C}^{k-1}$. It
is as if one creates an arc or a dome above a cycle.  Creating new
cells is the inverse of removing free arcs.

The action of {\em pulling cells together} is the dual of the above
and it corresponds to introducing a new cell $C_j^{k-1}$ with  $\partial C_j^{k-1}=0$ that appears
in a number of $\partial C_l^k$ whereby $C_l^k$ form a cocycle (see Chapter 5).  As
$\partial C_j^{k-1}=0$ this operation worsens tightness and it is
usually not done.

\section{Operations of Identification}
\begin{enumerate}
\item[1)] Glueing cells together
\item[2)] Melting cells together
\item[3)] Cutting cells open
\item[4)] Expanding cells
 \end{enumerate}

 For {\em glueing cells together} let $C_1^k$ and $C_2^k$ be cells for which
 $\partial C_1^k= \partial C_2^k$ then we may add the constraint
 $ C_1^k= C_2^k$ to the script. This equation can be solved by
 replacing $C_2^k$ by $C_1^k$ wherever it occurs in
 $\partial C_l^{k+1}$ for some $C_l^{k+1}$ and to remove $C_2^k$ from
 the script.

 This operation may turn $C_l^{k+1}$ into a floating cell, so a
 cleaning may follow a glueing operation.

\begin{example}
  Notice that points always satisfy $\partial p_1=1=\partial p_2$, so
  glueing together points is "free". Once this is done one can glue
  together lines, then planes, etc., as one likes.  Also, if one has
  the relations $\partial \lambda C_1^k=\partial \mu C_2^k$, one may
  glue $\lambda C_1^k=\mu C_2^k$.
\end{example}

The process of {\em melting cells together} is the dual of
glueing. For example, let $C_1^k$ and $C_2^k$ be $k-$cells such that,
if they do appear in the script, they only appear inside
$\partial C_l^{k+1}$ as a sum $C_1^k+C_2^k$ or as a linear combination
$\lambda C_1^k+\mu C_2^k$.  Then we create a new cell $C_0^k$ and add
the equations $C_0^k=C_1^k+C_2^k$ (or
$C_0^k=\lambda C_1^k+\mu C_2^k$).

This equation can be solved by replacing $C_1^k+C_2^k$ by $C_0^k$ in
every $\partial C_l^{k+1}$ whenever it occurs. After this operation
$C_1^k$ and $C_2^k$ become free cells and they can be removed.

 As a warning, even when completing this operation there may appear free
cells in $\partial C_1^k$ or $\partial C_2^k$ that one may consider to
remove them as well.

Notice that for the highest dimension $k=m$, the operation of melting
cells is free (and optional); after completing this operation one may
melt lower dimensional cells.

The third operation of identification is {\em cutting cells open}
which is the inverse of glueing cells together and it involves the
creation of a new cell $C_2^k$ with $\partial C_2^k=\partial C_1^k$,
together with a possible replacement of $C_1^k$ by $C_2^k$ inside
$\partial C_l^{k+1}$.  Here one starts with the higher dimensions and
then works down through the dimensions of the cells.

The last operation of identification, the expanding of cells is
defined as the opposite of melting.

In conclusion, hereby one replaces a cell $C_0^k$ by a chain
$$
C_1^k+\dots + C_l^k
$$
such that $\partial C_0^k=\partial C_1^k+\dots + \partial
C_l^k$. Whenever $C_0^k$ appears in $\partial C_1^{k+1}$, one replaces
it as well.

To enable an expansion one may have to create extra lower dimensional
objects that may be needed to build the boundaries
$\partial C_1^k,\dots,\partial C_l^k$, starting with extra points to
create zero lines, etc...  An expansion is called free if
$\cup \rb(C_j^k)\setminus \rb(C_0^k)$,
$\cup \rb^2(C_j^k)\setminus \rb^2(C_0^k)$, etc. consist entirely of
new cells.

Further details we leave as an exercise to the reader.

\section{Examples of basic operations on scripts}

\begin{example}[Moebius Strip]
  We start from the rectangle in Figure~\ref{fig:moebius} and we have:
  \begin{figure}[ht]
    \begin{center}\includegraphics[scale=0.5]{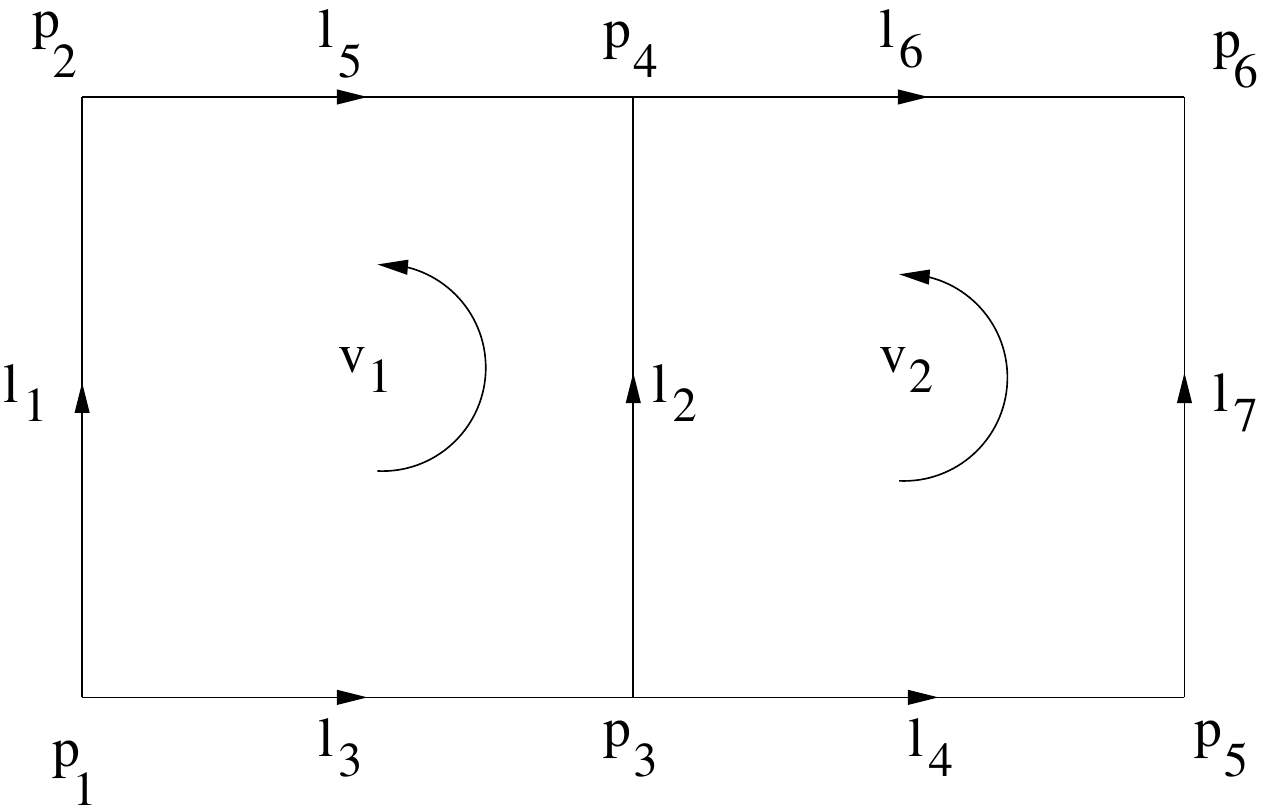}
      \caption{Starting rectangle for the Moebius Strip}
    \end{center} \label{fig:moebius}
  \end{figure}
  $$
  \mathcal{C}_0=\{p_1,p_2, p_3, p_4, p_5, p_6\},
  $$
  with $\partial p_j=1$,
  $$
  \mathcal{C}_1=\{l_1,l_2, l_3, l_4, l_5, l_6, l_7\},
  $$
  with $\partial l_1=p_2-p_1$, $\partial l_2=p_4-p_3$,
  $\partial l_3=p_3-p_1$, $\partial l_4=p_5-p_3$,
  $\partial l_5=p_4-p_2$, $\partial l_6=p_6-p_4$, and
  $\partial l_7=p_6-p_5$,
  $$
  \mathcal{C}_2=\{v_1,v_2\},
  $$
  with $\partial v_1=l_3+l_2-l_5-l_1$ and
  $\partial v_2=l_4+l_7-l_6-l_2$.

  Next we glue points $p_5=p_2$ and $p_6=p_4$ thus removing $p_6$ and
  $p_5$. This effectuates the following changes in the script:
  $\partial l_4=p_2-p_3$, $\partial l_6=p_1-p_4$,
  $\partial l_7=p_1-p_2=-\partial l_1$.

  Next we glue line $l_7=-l_1$, which works since
  $\partial l_7=-\partial l_1$ and this makes the following changes in
  the script:
  $$
  \partial v_2=l_4-l_1-l_6-l_2,
  $$
  leading to Moebius strip.
\end{example}

\begin{example}[Projective Plane]
  Reconsider the Moebius Strip with: $\partial l_1=p_2-p_1$,
  $\partial l_2=p_4-p_3$, $\partial l_3=p_3-p_1$,
  $\partial l_4=p_2-p_3$, $\partial l_5=p_4-p_2$,
  $\partial l_6=p_1-p_4$, and with: $\partial v_1=l_3+l_2-l_5-l_1$ and
  $\partial v_2=l_4-l_1-l_6-l_2$.
  \begin{figure}[H]
    \begin{center}\includegraphics[scale=0.4]{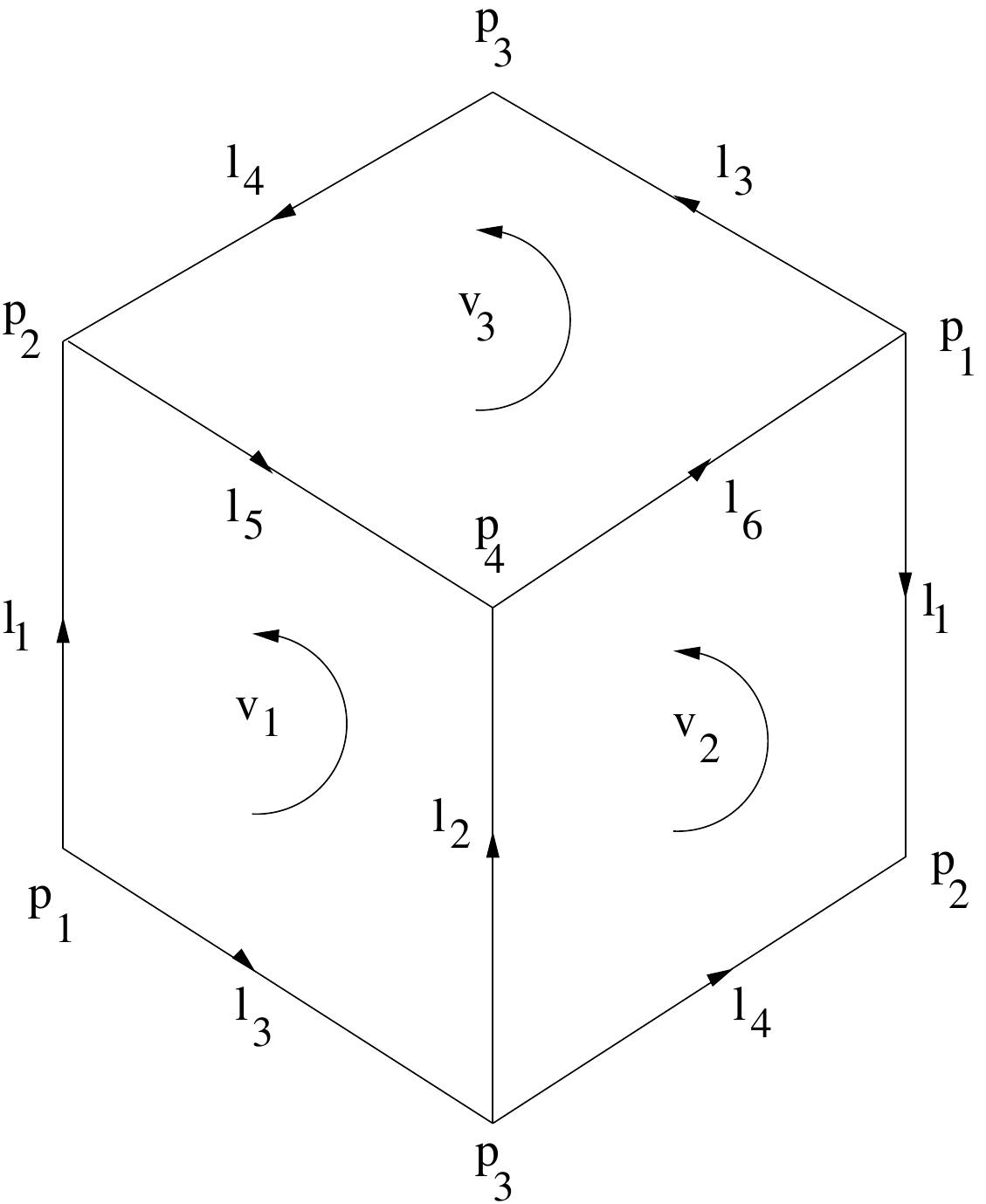}
      \caption{The Projective Plane}
    \end{center} \label{fig:projM}
  \end{figure}
  Attach a new cell $v_3$ to the boundary of Moebius Script
  $\rb v_3=\{l_3, l_4, l_5, l_6\}$. Tightness leads to:
  $$
  \partial v_3=l_5+l_6+l_3+l_4
  $$
  and
  $$
  \partial v_1+\partial v_2+ \partial v_3=2(l_3+l_4-l_1)
  $$
  so we obtain a projective plane.
\end{example}

\begin{example}[Another Klein bottle]
  We start with two Moebius strips (see Figure~\ref{fig:2moebius}).
  \begin{figure}[H]
    \begin{center}\includegraphics[scale=0.4]{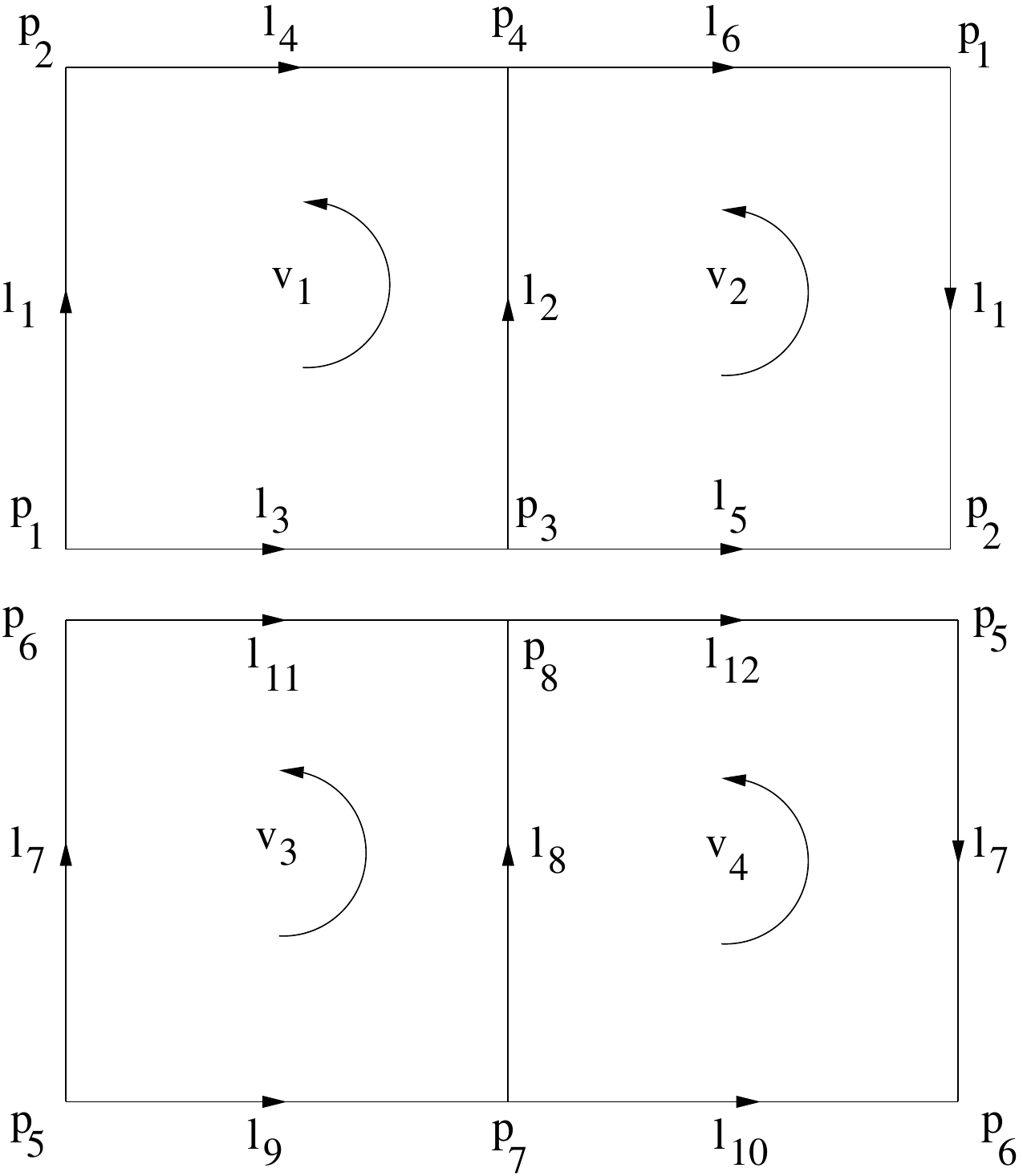}
      \caption{Two Moebius strips}
    \end{center} \label{fig:2moebius}
  \end{figure}
  Equations: exercise.

  Next we glue points $p_6=p_1$, $p_5=p_2$, $p_7=p_4$, and
  $p_8=p_3$. This gives rise to the following new relations:
  $$
  \partial l_{11}=p_3-p_1, \qquad \partial l_{12}=p_2-p_3,\qquad
  \partial l_9=p_4-p_2,\qquad \partial l_{10}=p_1-p_4.
  $$
  This allows us to glue further:
  $$
  l_{11}=l_3,\qquad l_{12}=l_5,\qquad l_9=l_4,\qquad l_{10}=l_6.
  $$
  This leads to the following script in Figure~\ref{fig:klein2}.
  \begin{figure}[H]\label{fig:klein2}
    \begin{center}\includegraphics[scale=0.4]{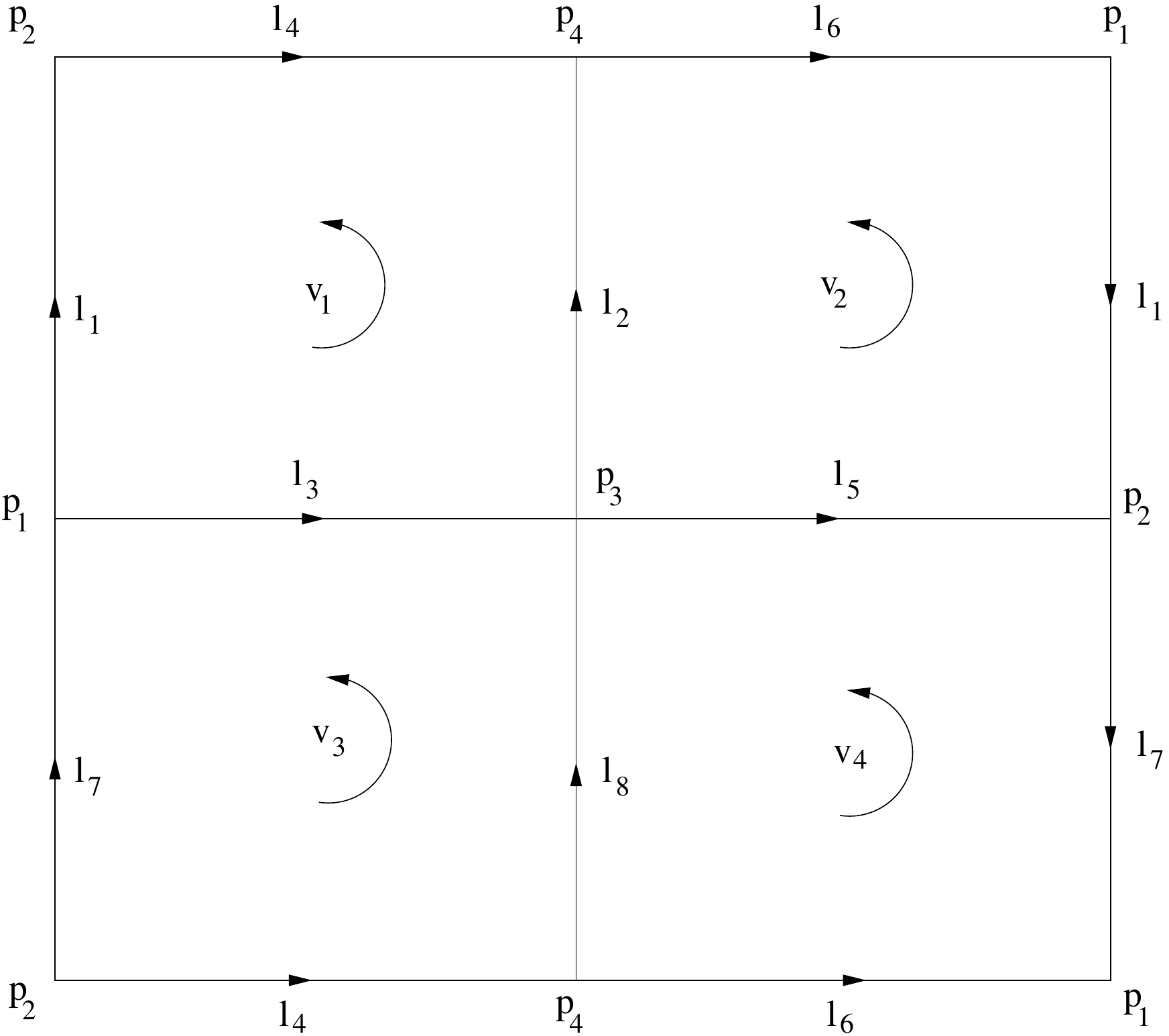}
      \caption{Two glued Moebius strips}
    \end{center}
  \end{figure}

  Next if we glue points $p_6=p_1$, $p_5=p_2$, $p_7=p_4$, and
  $p_8=p_3$. This also gives rise to the following new relations:
  $$
  \partial l_{1}=p_2-p_1, \qquad \partial l_{2}=p_4-p_3,\qquad
  \partial l_7=p_1-p_2,\qquad \partial l_{8}=p_3-p_4,
  $$
  $$
  \partial l_{3}=p_3-p_1, \qquad \partial l_{4}=p_4-p_2,\qquad
  \partial l_5=p_2-p_3,\qquad \partial l_{6}=p_1-p_4,
  $$

  $$
  \partial v_{1}=l_3+l_2-l_4-l_1, \qquad \partial
  v_{2}=l_5-l_1-l_6-l_2,\qquad \partial v_3=l_4+l_8-l_3-l_7,\qquad
  \partial v_{4}=l_6-l_7-l_5-l_8.
  $$
  This script is not equivalent to the Klein bottle script introduced
  earlier, but they do have a common refinement (expansion). To that
  end we first introduce new lines $l_4'$ and $l_3'$ with:
  $$
  \partial l_4'=p_3-p_2,\qquad \partial l_3'=p_4-p_1.
  $$
  \begin{figure}[ht]
    \label{fig:klein_refinement}
    \begin{center}\includegraphics[scale=0.4]{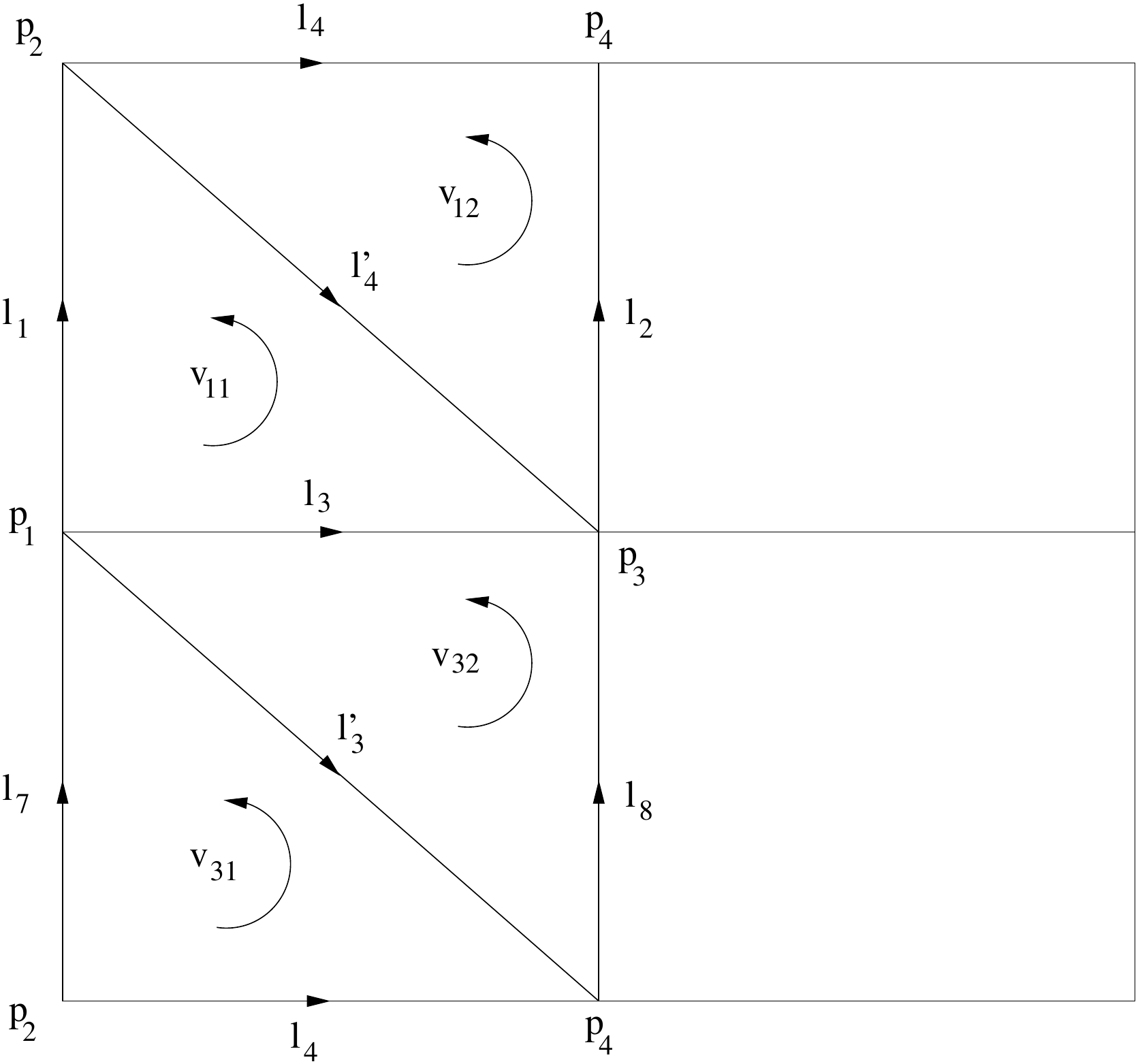}
      \caption{A refinement}
    \end{center}
  \end{figure}
  Then we consider the expansions:
  $$
  v_1=v_{11}+v_{12},\qquad v_3=v_{31}+v_{32},
  $$
  $$
  \partial v_{11}=l_3-l_4'-l_1,\qquad \partial v_{12}=l_4'+l_2-l_4,
  $$
  $$
  \partial v_{31}=l_4-l_3'-l_7,\qquad \partial v_{32}=l_3'+l_8-l_3.
  $$
  Now we melt $v_{11}$ to $v_{32}$ and $v_{31}$ to $v_{12}$, i.e.:
  $$
  v_1'=v_{11}+v_{32}, \qquad v_3'=v_{31}+v_{12}.
  $$
  We then get rid of $l_3$ and $l_4$:
  $$
  \partial v_1'=l_3'+l_8-l_4-l_1,\qquad \partial v_3'=l_4'+l_2-l_3'-l_7.
  $$
  This leads to the new script:
  \begin{figure}[H]
    \label{fig:klein3}
    \begin{center}\includegraphics[scale=0.4]{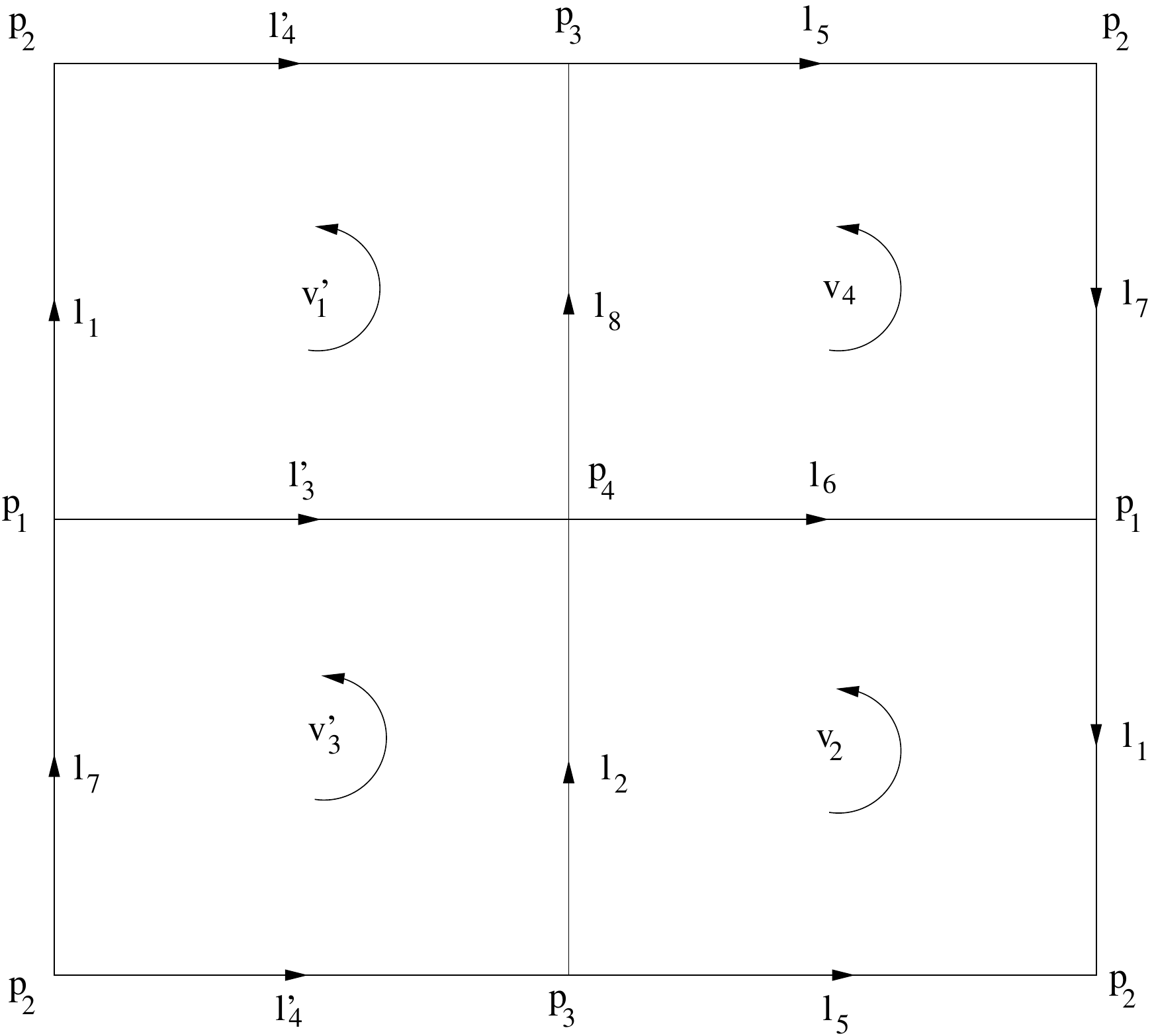}
      \caption{A new script for the Klein bottle}
    \end{center}
  \end{figure}
  This is equivalent (up to the names, of course) to the first Klein
  script introduced earlier. For CW-complexes topology is available
  and the two Klein bottles are topologically equivalent, but not as
  scripts.

  The Thomson addition of two projective planes is known to be a Klein
  bottle. It is obtained by deleting a disk from each projective plane
  and glueing the edges together. Now, removing a disk from a
  projective plane gives a Moebius strip.  So to carry out the Thomson
  addition of projective planes we, in fact, have to glue together two
  Moebius strips.
\end{example}

\section{3D World with 2D--disk portal}

The main idea here is to take two copies (top and bottom) of the
$\mathbb{R}^3\setminus \mathbb{D}$ where $\mathbb{D}$ is the
two--dimensional disk. These two copies are glued together by two
disks at the portal and two hemispheres at infinity.
  \begin{figure}[H]
    \label{fig:2Dportal}
    \begin{center}
      \includegraphics[scale=0.4]{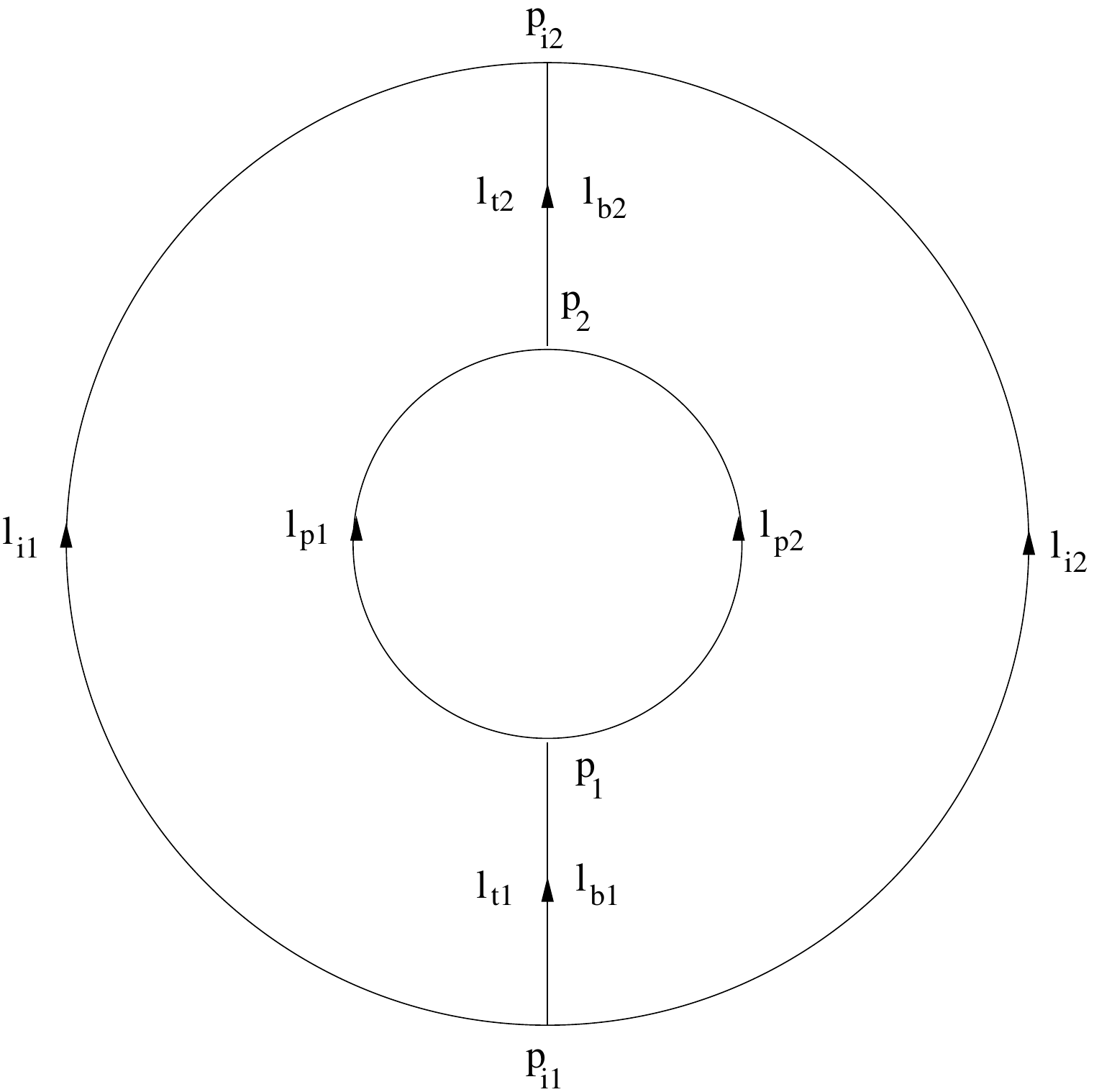}
      \caption{3D world with 2D--disk portal}
    \end{center}
  \end{figure}
The elements are:
\begin{itemize}
\item The points at infinity $pi_1, pi_2$ and the points at the portal
  $p_1, p_2$
\item The lines at infinity $li_1, li_2$ and the lines at the portal
  $lp_1, lp_2$
\item The lines for the top-space $lt_1, lt_2$ and for the bottom
  space $lb_1, lb_2$
\item The planes at infinity $vi_1, vi_2$ and the planes at the portal
  $vp_1, vp_2$
\item The planes for the top-space $vt_1, vt_2$ and for the bottom
  space $vb_1, vb_2$
\item $3-$D worlds for the top space $wt_1, wt_2$ and for the bottom
  space $wb_1, wb_2$.
\end{itemize}

Here is the script (world equations) for the lines:
$$
\partial li_1=pi_2-pi_1, \qquad \partial li_2=pi_2-pi_1, \qquad
\partial lp_1=p_2-p_1,\partial lp_2=p_2-p_1,
$$

$$
\partial lt_1=p_1-pi_1, \qquad \partial lt_2=pi_2-p_2, \qquad \partial
lb_1=\partial lt_1,\partial lb_2=\partial lt_2.
$$

Here is the script for the planes:
$$
\partial vi_1=li_2-li_1, \qquad \partial vi_2=li_2-li_1,
$$

$$
\partial vp_1=lp_2-lp_1=\partial vp_2,
$$

$$
\partial vt_1=lt_1+lp_1+lt_2-li_1,\qquad \partial vt_2=li_2-lt_2-lp_2-lt_1,
$$

$$
\partial vb_1=lb_1+lp_1+lb_2-li_1,\qquad \partial vb_2=li_2-lb_2-lp_2-lb_1.
$$

Here are the scripts for the $3-$D worlds in this case:
$$
\partial wt_1=vp_1+vt_1+vt_2-vi_1,\qquad \partial wt_2=vi_2-vp_2-vt_1-vt_2,
$$

$$
\partial wb_1=vp_2+vb_1+vb_2-vi_1,\qquad \partial wb_2=vi_2-vp_1-vb_1-vb_2.
$$

Closed portal equations (open at $\infty$, closed portal and open at infinity):
$$
\partial wt_1+\partial wt_2-\partial wb_1-\partial wb_2=2(vp_1-vp_2)
$$

Open portal equations:
$$
\partial wt_1+\partial wt_2+\partial wb_1+\partial wb_2=2(vi_2-vi_1).
$$

"Infinity" is like a second portal.

{\bf PORTAL DISCONNECTION:}

$$
\partial vp_3=\partial vp_4=\partial vp_1=\partial vp_2
$$

$$
\partial wt_2\longrightarrow vi_2-vp_3-vt_1-vt_2
$$

$$
\partial wb_2\longrightarrow vi_2-vp_4-vb_1-vb_2
$$

{\bf THIS IS CUTTING ALONG THE PORTAL:}

Re-glueing $vp_1=vp_3,\qquad vp_4=vp_2$, then no more portal.

The portal is an example of a $3-$D geometry that is a non-orientable
$3-$manifold that generalizes the Klein bottle.

%
%
%
%
%
%
%
%
%
%
%
%
%
%
%
%
%


\chapter{Metrics, Duals of Scripts, Dirac Operators}

In this section we will define the notion of the dual of a script,
which may not be a script in general. This will also allow us to introduce a Dirac operator and monogenic scripts.

\section{Metrics}

\begin{definition}\label{definition metric}
  For any script we can define the following metric, called the {\em Kronecker metric}:
  $$
  \inner{C^k_i, C^l_j} = \delta_{k,l} \delta_{i,j};
  $$
  where $C^k_i\in\mathcal{C}_k$ and $C^l_i\in\mathcal{C}_l$.

  This extends through linearity to an inner product on the modules of
  chains. Chains in modules of different dimensions are orthogonal.
\end{definition}

This represents the most canonical example of an inner product. More
general ones can be defined when needed, however this metric allows an
easy and straight-forward way of realising the notion of script
duality which follows.

\section{Duality}

Here we will describe the notion of a dual of a script and start with the definition of the dual of the $\partial$ operator with respect to the metric above.
This dual of the $\partial$ operator will be denoted by $d$. Just as in the
classical case and, via the Stokes formula, the dual boundary operator becomes:
\begin{definition}
  The {\em dual boundary operator}
  $d:\mathcal{C}_k\to\mathcal{C}_{k+1}$ is given by:
  $$
  \inner{d C^k_i, C^{k+1}_j}=\inner{C^k_i,\partial C^{k+1}_j}.
  $$
\end{definition}

\begin{remark}
  Since $\partial^2=0$ it is easy to see that $d^2=0$ as well, where
  $d^2:\mathcal{C}_k\to\mathcal{C}_{k+2}.$
\end{remark}
\begin{example}
  As expected, the dual boundary operator will take the accumulator
  into a sum of points, a point into a sum of lines, and so forth.
\end{example}

For a script:
\begin{align*}
  0 \longleftarrow \mathbb{Z} \overset{\partial}{\longleftarrow} \mathcal{M}_{0}
  \overset{\partial}{\longleftarrow} \mathcal{M}_{1} \overset{\partial}{\longleftarrow}\mathcal{M}_{2}
  \overset{\partial} {\longleftarrow} \cdots  \overset{\partial} {\longleftarrow} \mathcal{M}_{k}
\end{align*}

we now have, using the $d$ boundary operator above:
\begin{align*}
0 \longrightarrow \mathbb{Z} \overset{d}{\longrightarrow} \mathcal{M}_{0}
  \overset{d}{\longrightarrow} \mathcal{M}_{1} \overset{d}{\longrightarrow}\mathcal{M}_{2}
  \overset{d} {\longrightarrow} \cdots \overset{d} {\longrightarrow} \mathcal{M}_{k}=\mathbb{Z}^n,
\end{align*}
where n is the number of connected components of $\mathcal{M}_{k}$.

\begin{definition}
  The {\em dual of the script} is given by :
  \begin{align}
  \label{script-dual}
  \mathbb{Z}^n \overset{\partial'}{\longleftarrow} \mathcal{M'}_{0}
  \overset{\partial'}{\longleftarrow} \mathcal{M'}_{1} \overset{\partial'}{\longleftarrow}\mathcal{M'}_{2}
  \overset{\partial'} {\longleftarrow} \cdots   \overset{\partial'} {\longleftarrow}\mathcal{M'}_{k}=\mathbb{Z}.
\end{align}
where $\partial'=d$ and $\mathcal{M'}_{l}=\mathcal{M}_{k-l-1}$.
\end{definition}

\begin{remark}
  The dual of a script will not, in general, be a script by itself. For example, one must remove free arcs as they are in the kernel of the $d$ operator and $\mathcal{M}_{k}$ shuld be generated by a single cell so that it becomes the accumulator of the dual script $\mathcal{M'}_{0}$.
\end{remark}
\begin{remark}

  We will show that the dual of a script is also a script if and only
  if the original script is orientable.  In this case we have that the
  resulting dual script will be orientable as well, since the dual of
  the accumulator for each connected component will be the equivalent
  of a "volume cell" in the dual.
\end{remark}%

\begin{lemma}\label{lemma unitary script}
The dual of a unitary script is unitary.
\end{lemma}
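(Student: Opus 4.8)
The plan is to unwind the definition of the dual boundary operator $d$ against the Kronecker metric and read off the matrix entries of $d$ directly from those of $\partial$. First I would fix a $k$-cell $C^{k-1}_i$ in the original script and expand $dC^{k-1}_i = \sum_j \mu_{ij} C^k_j$ in the basis of $k$-cells. Pairing with $C^k_j$ and using the defining identity $\inner{dC^{k-1}_i, C^k_j} = \inner{C^{k-1}_i, \partial C^k_j}$ together with the orthonormality of the Kronecker metric, I get $\mu_{ij} = \inner{C^{k-1}_i, \partial C^k_j}$, which is precisely the coefficient $\lambda^i_j$ of $C^{k-1}_i$ in the expansion $\partial C^k_j = \sum_i \lambda^i_j C^{k-1}_i$. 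So the matrix of $d$ is the transpose of the matrix of $\partial$ (with appropriate index shift).

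Next I would invoke the hypothesis that the original script is unitary: by definition this means every such coefficient $\lambda^i_j$ lies in $\{-1,0,1\}$. Since the entries of $d$ are exactly these same numbers $\lambda^i_j$ (merely reindexed/transposed), every coefficient appearing in $dC^{k-1}_i = \partial' C'^{\,\ell}_{i}$ is again in $\{-1,0,1\}$. Hence the dual complex satisfies the unitarity condition cell-by-cell. I would phrase this uniformly across all levels $k$, noting the boundary index range and that the extreme modules ($\mathbb{Z}$ as accumulator, and the top module $\mathbb{Z}^n$) are handled by the same formula, with the accumulator-to-points map of the dual being the transpose of the points-to-$\mathbb{Z}$ augmentation, whose entries ($\partial p_j = 1$ in a minimal script) are again $\pm 1$.

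The only real subtlety — and the point I would be careful to state explicitly rather than grind through — is bookkeeping: one must check that the reindexing $\mathcal{M}'_\ell = \mathcal{M}_{k-\ell-1}$ and $\partial' = d$ actually lines up the dual boundary maps with the transposed $\partial$-matrices at every level, including the two boundary cases at the ends of the complex, and that the "free arc removal / single-cell top module" adjustments mentioned in the preceding remark do not introduce new nonzero coefficients. None of this changes any matrix entry — it only relabels which map is which — so no coefficient can leave $\{-1,0,1\}$. I therefore expect the proof to be short: the transpose of a matrix with entries in $\{-1,0,1\}$ again has entries in $\{-1,0,1\}$, and unitarity is by definition exactly this condition on the boundary matrices.
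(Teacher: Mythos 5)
Your proposal is correct and follows essentially the same route as the paper: both compute $\inner{dC^{l}_i, C^{l+1}_j}=\inner{C^{l}_i,\partial C^{l+1}_j}=\lambda_i^{(j)}$ via the Kronecker metric, conclude that $dC^{l}_i=\sum_j\lambda_i^{(j)}C^{l+1}_j$ is the transpose of $\partial$, and observe that the coefficients, being unchanged, remain in $\{-1,0,1\}$. Your extra remarks on the index shift and the end modules are harmless elaborations of the same argument.
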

\begin{proof}
For each $C_i^l\in \mathcal{C}_l$ with $l\neq m $, where $m$ is the dimension of the script, we can determine $d C_i^l$ by calculating the inner product for each $C_j^{l+1}$

\begin{align*}
\langle d C_i^l, C_j^{l+1} \rangle &= \langle C_i^l, \partial C_j^{l+1} \rangle\\
&= \sum_{m} \lambda_m^{(j)} \langle C_i^l, C_m^{l} \rangle\\
&= \lambda_i^{(j)}
\end{align*}

Hence $d C_i^l = \sum_{j} \lambda_{i}^{(j)} C_{j}^{l+1}$.
\end{proof}

\begin{example}[addition in $\mathbb{Z}$]

We have

\begin{displaymath}
0 \leftarrow \mathbb{Z} \overset{\partial}{\leftarrow} \mathcal{M}_0
\end{displaymath}

with $\partial p_j = 1$ for all $p_j\in \mathcal{C}_0$.\\

The dual of this script is given by:

\begin{displaymath}
0 \leftarrow \mathcal{M}_0 \overset{d}{\leftarrow} \mathbb{Z}
\end{displaymath}

where $d 1 = \sum_{j} p_j$ and $dp_j = 0$. The dual is a script if and only if $\mathcal{C}_0 = \{p\}$ and thus we have $d 1  = p$, $dp = 0$.

\end{example}

\begin{example}[An interval]

The following script

\begin{displaymath}
0 \leftarrow \mathbb{Z} \overset{\partial}{\leftarrow} \mathcal{M}_0 \overset{\partial}{\leftarrow} \mathcal{M}_1
\end{displaymath}

where $\mathcal{C}_0 = \{p,q\}$, $\mathcal{C}_1 = \{\ell\}$, $\partial p = \partial q = 1$ and $\partial \ell = p-q$, represents an interval.\\

The dual of this script is given by

\begin{displaymath}
0 \leftarrow \mathcal{M}_1 \overset{d}{\leftarrow} \mathcal{M}_0 \overset{d}{\leftarrow} \mathbb{Z}
\end{displaymath}

where, by following the proof of Lemma \ref{lemma unitary script}, we have $d p = l$, $d q = -l$, $d1 = p+q$ and $d\ell = 0$.

\end{example}

\begin{example}[$m$-sphere and $m$-dimensional ball]

The script

\begin{displaymath}
0 \leftarrow \mathbb{Z} \overset{\partial}{\leftarrow} \mathcal{M}_0 \overset{\partial}{\leftarrow} \mathcal{M}_1 \overset{\partial}{\leftarrow} \ldots \overset{\partial}{\leftarrow} \mathcal{M}_k \overset{\partial}{\leftarrow} \ldots \overset{\partial}{\leftarrow} \mathcal{M}_m
\end{displaymath}

where $\mathcal{C}_l = \{C_1^l,C_2^l\}$, $\partial C_1^0 = \partial C_2^0 = 1$ and $\partial C_j^{k} = C_{1}^{k-1}-C_{2}^{k-1}$ for $k=1,\ldots,m$, $l=0,\ldots,m$, represents an $m$-sphere.

The dual of this script won't be a script itself. But we can still calculate its dual:

\begin{displaymath}
0 \leftarrow \mathcal{M}_{m} \overset{d}{\leftarrow} \ldots \overset{d}{\leftarrow} \mathcal{M}_2 \overset{d}{\leftarrow} \mathcal{M}_1 \overset{d}{\leftarrow} \mathcal{M}_0 \overset{d}{\leftarrow} \mathbb{Z}
\end{displaymath}

with $d 1 = C_1^0+C_2^0$, $dC_{1}^k=C_1^{k+1}+C_2^{k+1}$, $dC_{2}^k=-C_1^{k+1}-C_2^{k+1}$ where $k = 0, \ldots,m-1$ and $dC_1^m = dC_2^m = 0$.

If we want the dual to be a script we add the volume $\mathcal{C}_{m+1} = \{C_{1}^{m+1}\}$, with $\partial C_j^{m+1} = C_{1}^{m}-C_{2}^{m}$, of the $m$-sphere so that we have the script of the $m$-dimensional ball.

Its dual script is then given by

\begin{displaymath}
0 \leftarrow \mathcal{M}_{m+1} \overset{d}{\leftarrow} \ldots \overset{d}{\leftarrow} \mathcal{M}_2 \overset{d}{\leftarrow} \mathcal{M}_1 \overset{d}{\leftarrow} \mathcal{M}_0 \overset{d}{\leftarrow} \mathbb{Z}
\end{displaymath}

with $d 1 = C_1^0+C_2^0$, $dC_{1}^k=C_1^{k+1}+C_2^{k+1}$, $dC_{2}^k=-C_1^{k+1}-C_2^{k+1}$ where $k = 0, \ldots,m-1$, $dC_1^m = C_1^{m+1}$, $dC_2^m = -C_1^{m+1}$ and $dC_1^{m+1} = 0$.

\end{example}

\begin{example}[Simplexes]

We have the following script

\begin{align*}
\mathcal{C}_0 &= \{[0],[1],\ldots,[m]\}\\
\mathcal{C}_1 &= \{[i,j]\vert 0\leq i<j\leq m\}\\
\vdots\\
\mathcal{C}_k &= \{[\alpha_0,\ldots,\alpha_k]\vert 0\leq \alpha_0<\alpha_1<\ldots<\alpha_k\leq m\}\\
\vdots\\
\mathcal{C}_m &= \{[0,1,\ldots,m]\}\\
\end{align*}

where the boundary map is defined as:

\begin{displaymath}
\partial [\alpha_0,\ldots,\alpha_k] = \sum_{j=0}^k (-1)^j [\alpha_0,\ldots,\alpha_k]_j = \sum_{j=0}^k (-1)^j [\alpha_0,\ldots, \alpha_{j-1}, \alpha_{j+1},\ldots,\alpha_k]
\end{displaymath}

The dual boundary operator is

\begin{align*}
d [\alpha_0,\ldots,\alpha_k] =& \sum_{j=0}^{\alpha_0-1} [j,\alpha_0,\ldots,\alpha_k] - \sum_{j=\alpha_0+1}^{\alpha_1-1} [\alpha_0,j,\alpha_1,\ldots,\alpha_k] \\
&+ \ldots + (-1)^k \sum_{j=\alpha_{k-1}+1}^{\alpha_k-1} [\alpha_0,\ldots,\alpha_{k-1},j,\alpha_k]\\
&+ (-1)^{k+1}\sum_{j=\alpha_k+1}^{m} [\alpha_0,\ldots,\alpha_k, j]
\end{align*}

If $\alpha_l = \alpha_{l+1}-1$ then we leave out the sum $\sum_{j=\alpha_{l}+1}^{\alpha_{l+1}-1}$.

\end{example}

\begin{example}[A $2-$torus]

We have the following script for a $2-$torus:

\begin{gather*}
\mathcal{C}_0 = \{p_0, p_1, p_2, p_3\}, \quad \partial p_j = 1, j = 0, 1, 2, 3.\\
\mathcal{C}_1 = \{\ell_1, \ell_2, \ell_3, \ell_4, \ell_5, \ell_6, \ell_7, \ell_8\},\\
\partial \ell_1 = p_1 - p_0, \quad \partial \ell_2 = p_0 - p_1, \quad \partial \ell_3 = p_2 - p_0, \quad \partial \ell_4 = p_0 - p_2\\
\partial \ell_5 = p_3 - p_2, \quad \partial \ell_6 = p_2 - p_3, \quad \partial \ell_7 = p_3 - p_1, \quad \partial \ell_8 = p_1 - p_3\\
\mathcal{C}_2 = \{v_1, v_2, v_3, v_4\},\\
\partial v_1 = \ell_5 + \ell_8 - \ell_1 - \ell_4, \quad \partial v_2 = \ell_6 + \ell_4 - \ell_2 - \ell_8,\\
\partial v_3 = \ell_1 + \ell_7 - \ell_5 - \ell_3, \quad \partial v_4 = \ell_2 + \ell_3 - \ell_6 - \ell_7,\\
\mathcal{C}_3 = \{C\}, \quad \partial C = v_1 + v_2 + v_3 + v_4
\end{gather*}

Here we compute the dual of the $2-$torus described in section 3.6. We have that:

 \begin{gather*}
   d 1 = p_0+p_1+p_2+p_3, \\
 dp_0=  \ell_2 -   \ell_1+ \ell_4- \ell_3 ;\quad dp_1=  \ell_1 -   \ell_2+ \ell_8- \ell_7 ;\quad dp_2=  \ell_3 -   \ell_4+ \ell_6- \ell_5 ;\quad dp_3=  \ell_5 -   \ell_6+ \ell_7- \ell_8  \\
   d\ell_1=v_3-v_1; \quad d\ell_2=v_4-v_2; \quad  d\ell_3=v_4-v_3; \quad d\ell_4=v_2-v_1;  \\
   d\ell_5=v_1-v_3; \quad d\ell_6=v_2-v_4; \quad  d\ell_7=v_3-v_4; \quad d\ell_8=v_1-v_2;  \\
   d v_1 = d v_2 =  d v_3 = d v_4 = C ; \\
   dC=0.
\end{gather*}
\end{example}

\begin{remark}
With the change $\partial'=d$ and $\mathcal{C}'_k=\mathcal{C}_{2-k-1}$ and $v'=p$, $1'=C$, $p'=l$ and $l'=v$ we see that the dual of the torus is represented by the torus itself, with a suitable change of indices.
\end{remark}

\begin{example}[Klein bottle]

We look at the following script of a Klein bottle described in section 3.7.

\begin{gather*}
\mathcal{C}_0 = \{p_0, p_1, p_2, p_3\}, \quad \partial p_j = 1, j = 0, 1, 2, 3.\\
\mathcal{C}_1 = \{\ell_1, \ell_2, \ell_3, \ell_4, \ell_5, \ell_6, \ell_7, \ell_8\},\\
\partial \ell_1 = p_1 - p_0, \quad \partial \ell_2 = p_0 - p_1, \quad \partial \ell_3 = p_2 - p_0, \quad \partial \ell_4 = p_0 - p_2\\
\partial \ell_5 = p_3 - p_1, \quad \partial \ell_6 = p_1 - p_3, \quad \partial \ell_7 = p_3 - p_2, \quad \partial \ell_8 = p_2 - p_3\\
\mathcal{C}_2 = \{v_1, v_2, v_3, v_4\},\\
\partial v_1 = \ell_5 + \ell_8 - \ell_2 - \ell_3, \quad \partial v_2 = \ell_6 - \ell_1 - \ell_4 - \ell_8,\\
\partial v_3 = -\ell_1 + \ell_3 + \ell_7 - \ell_5 , \quad \partial v_4 = \ell_4 - \ell_2 - \ell_6 - \ell_7,
\end{gather*}

Here we compute the dual of the Klein bottle, which will not be a script. We have that:

 \begin{gather*}
   d 1 = p_0+p_1+p_2+p_3, \\
 dp_0=  \ell_2 -   \ell_1+ \ell_4- \ell_3 ;\quad dp_1=  \ell_1 -   \ell_2+ \ell_6- \ell_5 ;\quad dp_2=  \ell_3 -   \ell_4+ \ell_8- \ell_7 ;\quad dp_3=  \ell_5 -   \ell_6+ \ell_7- \ell_8  \\
   d\ell_1=-v_2-v_3; \quad d\ell_2=-v_1-v_4; \quad  d\ell_3=v_3-v_1; \quad d\ell_4=v_4-v_2;  \\
   d\ell_5=v_1-v_3; \quad d\ell_6=v_2-v_4; \quad  d\ell_7=v_3-v_4; \quad d\ell_8=v_1-v_2;  \\
   d v_1 = d v_2 =  d v_3 = d v_4 = 0  .
\end{gather*}

\end{example}

\begin{example}[Extended Klein bottle]

As mentioned in section 3.7, we can add a new cell $v_5$, with $\partial v_5 = \ell_1 + \ell_2$ to the script we obtain a script for the ``extended Klein bottle", which is tight, but not unitary. Just adding this cell won't make the dual into a script:

\begin{gather*}
d1 = p_0+p_1+p_2+p_3\\
d p_0 = -\ell_1 + \ell_2 - \ell_3 +\ell_4, \quad d p_1 = \ell_1 -\ell_2 - \ell_5 + \ell_6,\\
d p_2 = \ell_3 - \ell_4 - \ell_7 + \ell_8, \quad d p_3 = \ell_5 - \ell_6 + \ell_7 - \ell_8\\
d \ell_1 = -v_2 - v_3 + v_5, \quad d \ell_2 = -v_1 - v_4 + v_5, \quad d \ell_3 = -v_1 + v_3, d \ell_4 = -v_2 + v_4\\
d \ell_5 = v_1 - v_3, \quad d \ell_6 = v_2 - v_4, \quad d \ell_7 = v_3 - v_4, \quad d \ell_8 = v_1 - v_2\\
d v_1 = dv_2 = dv_3 = dv_4 = dv_5 = 0
\end{gather*}

But if we add a cell $C$, with $\partial C = v_1+v_2+v_3+v_4+2v_5$, then its dual will be a script, but the script won't be unitary any more. The dual operator will be the same for the points and lines, but it changes for the planes:

\begin{gather*}
d v_1 = dv_2 = dv_3 = dv_4 = C, \quad d v_5 = 2C\\
dC=0
\end{gather*}

As $dv_5 = 2C$, we have that $v_5$ is not a tight cell in the dual script. Hence the dual is not tight and not unitary.

\end{example}

\begin{example}[Projective plane]

As in section 3.8 we start from the following script:

\begin{gather*}
\mathcal{C}_0 = \{p_1,p_2,p_3\}, \quad \partial p_j = 1, j=1,2,3\\
\mathcal{C}_1 = \{ \ell_1,\ell_2,\ell_3,\ell_4,\ell_5,\ell_6\},\\
\partial \ell_1 = p_2 - p_1, \quad \partial \ell_2 = p_1 - p_2, \quad \partial \ell_3 = p_3 - p_1, \quad \partial \ell_4 = p_1 - p_2\\
\partial \ell_5 = p_3 - p_2, \quad \partial \ell_6 = p_2 - p_3,\\
\mathcal{C}_2 = \{v_1,v_2,v_3,v_4\}\\
\partial v_1 = -\ell_2 - \ell_3 + \ell_5, \quad \partial v_2 = -\ell_1 - \ell_4 - \ell_5\\
\partial v_3 = -\ell_1 + \ell_3 + \ell_6, \quad \partial v_4 = -\ell_2 + \ell_4 - \ell_6
\end{gather*}

Here we compute the dual of the projective plane, which won't be a script. We have that:

\begin{gather*}
   d 1 = p_1+p_2+p_3, \\
  dp_1=  \ell_2 -   \ell_1+ \ell_4- \ell_3 ;\quad dp_2=  \ell_1 -   \ell_2+ \ell_6- \ell_5 ;\quad dp_3=  \ell_3 -   \ell_4+ \ell_5- \ell_6  \\
   d\ell_1=-v_2-v_3; \quad d\ell_2=-v_1-v_4; \quad  d\ell_3=v_3-v_1; \quad d\ell_4=v_4-v_2;  \\
   d\ell_5=v_1-v_2; \quad d\ell_6=v_3-v_4;   \\
   d v_1 = d v_2 =  d v_3 = d v_4 = 0.
\end{gather*}
\end{example}

\begin{example}[Extended projective plane]

We can do the same trick as with the Klein bottle and add a $v_5$, with $\partial v_5 = \ell_1 + \ell_2$ to the script we obtain a script for the ``extended projective plane". Calculating its dual yields

\begin{gather*}
d1 = p_1 + p_2 + p_3\\
dp_1 = \ell_2 - \ell_1 + \ell_4 - \ell_3, \quad dp_2 = \ell_1 - \ell_2 - \ell_5 + \ell_6, \quad dp_3 = \ell_3 - \ell_4 + \ell_5 - \ell_6\\
d\ell_1 = -v_2-v_3+v_5, \quad d \ell_2 = -v_1 - v_4 + v_5, \quad d \ell_3 = -v_1 + v_3\\
d\ell_4 = -v_2 + v_4, \quad d\ell_5 = v_1-v_2, \quad d\ell_6 = v_3-v_4\\
dv_1 = dv_2 = dv_3 = dv_4 = dv_5 = 0
\end{gather*}

which still isn't a script. So once again adding a new cell $C\in\mathcal{C}_3$, $\partial C = v_1+v_2+v_3+v_4+2v_5$, then the dual will be a script. The dual operator acts the same on points and lines but for planes and for $C$ we have

\begin{gather*}
dv_1 = dv_2 = dv_3 = dv_4 = C, \quad dv_5 = 2C\\
dC=0
\end{gather*}
\end{example}

\begin{example}[Moebius strip]

After the glueing of the cells done in section 4.4, we get the following script:

\begin{gather*}
\mathcal{C}_0 = \{p_1,p_2,p_3,p_4\}, \quad \partial p_j = 1, j=1,2,3,4\\
\mathcal{C}_1 = \{ \ell_1,\ell_2,\ell_3,\ell_4,\ell_5,\ell_6\},\\
\partial \ell_1 = p_2 - p_1, \quad \partial \ell_2 = p_4 - p_3, \quad \partial \ell_3 = p_3 - p_1 \\
\partial \ell_4 = p_2 - p_3, \quad \partial \ell_5 = p_4 - p_2, \quad \partial \ell_6 = p_1 - p_4,\\
\mathcal{C}_2 = \{v_1,v_2\}\\
\partial v_1 = -\ell_1 + \ell_2 + \ell_3 - \ell_5, \quad \partial v_2 = -\ell_1 - \ell_2 + \ell_4 - \ell_6
\end{gather*}

Note that the dual won't be a script, nevertheless it is given by

\begin{gather*}
d1 = p_1 + p_2 + p_3 + p_4\\
dp_1 = - \ell_1 - \ell_3 + \ell_6, \quad dp_2 = \ell_1 + \ell_4 - \ell_5,\\
dp_3 = -\ell_2 + \ell_3 - \ell_4, \quad dp_4 = \ell_2 + \ell_5 - \ell_6\\
d\ell_1 = -v_1 - v_2, \quad d \ell_2 = v_1 - v_2, \quad d \ell_3 = v_1\\
d\ell_4 = v_2, \quad d\ell_5 = -v_1, \quad d\ell_6 = -v_2\\
dv_1 = dv_2 = 0
\end{gather*}

\end{example}

\begin{example}[Another projective plane model]

Reconsider the Moebius strip with an extra cell $v_3$:

\begin{gather*}
\mathcal{C}_0 = \{p_1,p_2,p_3,p_4\}, \quad \partial p_j = 1, j=1,2,3,4\\
\mathcal{C}_1 = \{ \ell_1,\ell_2,\ell_3,\ell_4,\ell_5,\ell_6\},\\
\partial \ell_1 = p_2 - p_1, \quad \partial \ell_2 = p_4 - p_3, \quad \partial \ell_3 = p_3 - p_1 \\
\partial \ell_4 = p_2 - p_3, \quad \partial \ell_5 = p_4 - p_2, \quad \partial \ell_6 = p_1 - p_4,\\
\mathcal{C}_2 = \{v_1,v_2,v_3\}\\
\partial v_1 = -\ell_1 + \ell_2 + \ell_3 - \ell_5, \quad \partial v_2 = -\ell_1 - \ell_2 + \ell_4 - \ell_6\\
\partial v_3 = \ell_5+\ell_6+\ell_3+\ell_4
\end{gather*}

Here we compute the dual of the other projective plane model described in section 4.4... {\bf insert picture}.
 \begin{gather*}
   d 1 = p_1+p_2+p_3+p_4, \\
 dp_1= - \ell_1 -   \ell_3+ \ell_6 ;\quad dp_2=  \ell_1 +   \ell_4- \ell_5 ;\quad dp_3= - \ell_2 +   \ell_3- \ell_4;\quad dp_4=  \ell_2 +  \ell_5 - \ell_6  \\
   d\ell_1=-v_1-v_2; \quad d\ell_2= v_1-v_2; \quad  d\ell_3=v_1+v_3; \quad d\ell_4=v_2+v_3;  \\
   d\ell_5=-v_1+v_3; \quad d\ell_6=-v_2+v_3;   \\
   d v_1 = d v_2 =  d v_3 = 0.
\end{gather*}

\end{example}

\begin{example}[Another Klein bottle]

We will compute the dual of the Klein bottle, described in section 4.4. Its script is

\begin{gather*}
\mathcal{C}_0 = \{p_1,p_2,p_3,p_4\}, \quad \partial p_j = 1, j=1,2,3,4\\
\mathcal{C}_1 = \{ \ell_1,\ell_2,\ell_3,\ell_4,\ell_5,\ell_6,\ell_7,\ell_8\},\\
\partial \ell_1 = p_2 - p_1, \quad \partial \ell_2 = p_4 - p_3, \quad \partial \ell_3 = p_3 - p_1, \quad \partial \ell_4 = p_4 - p_2,\\
\partial \ell_5 = p_2 - p_3, \quad \partial \ell_6 = p_1 - p_4, \quad \partial \ell_7 = p_1 - p_2, \quad \partial \ell_8 = p_3 - p_4\\
\mathcal{C}_2 = \{v_1,v_2,v_3,v_4\}\\
\partial v_1 = -\ell_1 + \ell_2 + \ell_3 - \ell_4, \quad \partial v_2 = -\ell_1 - \ell_2 + \ell_5  - \ell_6\\
\partial v_3 = -\ell_3 + \ell_4 - \ell_7 + \ell_8, \quad \partial v_4 = -\ell_5 + \ell_6 - \ell_7 - \ell_8
\end{gather*}

Note that the dual won't be a script, nevertheless it is given by

\begin{gather*}
d1 = p_1 + p_2 + p_3 + p_4\\
dp_1 = - \ell_1 - \ell_3 + \ell_6 + \ell_7, \quad dp_2 = \ell_1 - \ell_4 + \ell_5 - \ell_7,\\
dp_3 = -\ell_2 + \ell_3 - \ell_5 + \ell_8, \quad dp_4 = \ell_2 + \ell_4 - \ell_6 - \ell_8\\
d\ell_1 = -v_1 - v_2, \quad d \ell_2 = v_1 - v_2, \quad d \ell_3 = v_1 - v_3, \quad d\ell_4 = -v_1 + v_3,\\
d\ell_5 = v_2 - v_4, \quad d\ell_6 = -v_2 + v_4, \quad d\ell_7 = -v_3 - v_4, \quad d\ell_8 = v_3 - v_4\\
dv_1 = dv_2 = dv_3 = dv_4 = 0
\end{gather*}

\end{example}

\begin{example}[3D-world without 2D-disk portal]\label{example without portal}

Here is the script without the 2D-disk portal:

\begin{gather*}
\mathcal{C}_0 = \{pi_1,pi_2,p_1,p_2\}\\
\partial pi_1 = \partial pi_2 = \partial p_1 = \partial p_2 = 1\\
\hline
\mathcal{C}_1 = \{li_1,li_2,lt_1,lb_1,lt_2,lb_2\}\\
\partial li_1 = pi_2-pi_1 = \partial li_2, \quad
\partial lp_1 = p_2-p_1 = \partial lp_2,\\
\partial lt_1=p_1-pi_1 = \partial
lb_1, \quad \partial lt_2 = pi_2-p_2 = \partial lb_2\\
\hline
\mathcal{C}_2 = \{vi_1,vi_2,vp_1,vp_2,vt_1,vt_2,vb_1,vb_2\}\\
\partial vi_1=li_2-li_1 = \partial vi_2, \quad \partial vp_1=lp_2-lp_1=\partial vp_2\\
\partial vt_1=lt_1+lp_1+lt_2-li_1,\quad \partial vt_2=li_2-lt_2-lp_2-lt_1\\
\partial vb_1=lb_1+lp_1+lb_2-li_1,\quad \partial vb_2=li_2-lb_2-lp_2-lb_1\\
\hline
\mathcal{C}_3 = \{wt_1,wt_2,wb_1,wb_2\}\\
\partial wt_1=vp_1+vt_1+vt_2-vi_1,\quad \partial wt_2=vi_2-vp_2-vt_1-vt_2\\
\partial wb_1=vp_2+vb_1+vb_2-vi_1,\quad \partial wb_2=vi_2-vp_1-vb_1-vb_2
\end{gather*}

Its dual won't be a script, but it is given by

\begin{gather*}
d1 = pi_1+pi_2+p_1+p_2,\\
\hline
dpi_1 = -li_1 - li_2 - lt_1 - lb_1, \quad dpi_2 = li_1 + li_2 + lt_2 + lb_2,\\
dp_1 = -lp_1 - lp_2 + lt_1 + lb_1, \quad dp_2 = lp_1 + lp_2 - lt_2 - lb_2,\\
\hline
dli_1 = -vi_1 - vi_2 - vt_1 - vb_1, \quad dli_2 = vi_1 + vi_2 + vt_2 + vb_2,\\
dlp_1 = -vp_1 - vp_2 + vt_1 + vb_1, \quad dlp_2 = vp_1 + vp_2 - vt_2 - vb_2,\\
dlt_1 = vt_1 - vt_2, \quad dlb_1 = vb_1 - vb_2, \quad dlt_2 = vt_1 - vt_2, \quad dlb_2 = vb_1 - vb_2,\\
\hline
dvi_1 = -wt_1 - wb_1, \quad dvi_2 = wt_2 + wb_2, \quad dvp_1 = wt_1 - wb_2, \qquad dvp_2 = -wt_2 + wb_1\\
dvt_1 = wt_1 - wt_2, \quad dvt_2 = wt_1 - wt_2, \quad dvb_2 = wb_1 - wb_2, \quad dvb_1 = wb_1 - wb_2\\
\hline
dwt_1 = dwt_2 = dwb_1 = dwb_2 = 0
\end{gather*}

\end{example}

\begin{example}[3D-world with 2D-disk portal]

We add two cells $vp_3$ and $vp_4$ to the script of Example \ref{example without portal} such that $\partial vp_3 = \partial vp_4= \partial vp_1 = \partial vp_2$ and change the boundary of $wt_1$ and $wb_2$:

\begin{align*}
\partial wt_2 &= vi_2-vp_3-vt_1-vt_2\\
\partial wb_2 &= vi_2-vp_4-vb_1-vb_2
\end{align*}

Its dual still won't be a script, but it is given by

\begin{gather*}
d1 = pi_1+pi_2+p_1+p_2,\\
\hline
dpi_1 = -li_1 - li_2 - lt_1 - lb_1, \quad dpi_2 = li_1 + li_2 + lt_2 + lb_2,\\
dp_1 = -lp_1 - lp_2 + lt_1 + lb_1, \quad dp_2 = lp_1 + lp_2 - lt_2 - lb_2,\\
\hline
dli_1 = -vi_1 - vi_2 - vt_1 - vb_1, \quad dli_2 = vi_1 + vi_2 + vt_2 + vb_2,\\
dlp_1 = -vp_1 - vp_2 - vp_3 - vp_4 + vt_1 + vb_1, \\
dlp_2 = vp_1 + vp_2 + vp_3 + vp_4 - vt_2 - vb_2,\\
dlt_1 = vt_1 - vt_2, \quad dlb_1 = vb_1 - vb_2, \quad dlt_2 = vt_1 - vt_2, \quad dlb_2 = vb_1 - vb_2,\\
\hline
dvi_1 = -wt_1 - wb_1, \quad dvi_2 = wt_2 + wb_2, \quad dvp_1 = wt_1, \qquad dvp_2 = wb_1\\
dvt_1 = wt_1 - wt_2, \quad dvt_2 = wt_1 - wt_2, \quad dvb_2 = wb_1 - wb_2, \quad dvb_1 = wb_1 - wb_2\\
dvp_3 = -wt_2, \quad dvp_4 = -wb_2\\
\hline
dwt_1 = dwt_2 = dwb_1 = dwb_2 = 0
\end{gather*}
\end{example}

\begin{example}[The pentagon model of the projective plane]

Let $\mathbb{Z}_5$ contain the elements $\{1,2,3,4,5\}$. We start from the following script:

\begin{gather*}
\mathcal{C}_0 = \{p_j\mid j\in \mathbb{Z}_{5}\} \cup \{p_j\mid j\in \mathbb{Z}_{5}\}, \quad \partial p_j = \partial q_j = 1\\
\mathcal{C}_1 = \{k_j\mid j\in \mathbb{Z}_{5}\} \cup \{l_j\mid j\in \mathbb{Z}_{5}\} \cup \{m_j\mid j\in \mathbb{Z}_{5}\}\\
\partial k_j = p_{j+1} - p_j, \quad \partial l_j = p_j - q_j, \quad \partial m_j = q_{j+2} - q_j, \quad j \in\mathbb{Z}_5\\
\mathcal{C}_2 = \{v_j\mid j\in \mathbb{Z}_{5}\}\cup\{v\}\\
\partial v_j = l_j + k_j + k_{j+1} - l_{j+2} - m_j, \quad j\in\mathbb{Z}_5\\
\partial v = m_1 + m_2 + m_3 + m_4 + m_5
\end{gather*}

The dual won't be a script, nevertheless it is given by

\begin{gather*}
d1 = \sum_{j=1}^5 (p_j + q_j)\\
dp_j =  k_{j-1} - k_j + l_j, \quad dq_j = m_{j-2} - m_j - l_j, \quad j\in\mathbb{Z}_5\\
dk_j = v_j + v_{j-1}, \quad dl_j = v_j - v_{j-2}, \quad dm_j = v - v_j, \quad j\in \mathbb{Z}_5\\
dv_1 = dv_2 = dv_3 = dv_4 = dv_5 = dv = 0
\end{gather*}

\end{example}

\section{Duality and Orientability in general:}

\begin{enumerate}[1)]
\item If the script is generated by a single cell of dim $n+1$ then
  $\partial C^{n+1}$ will be an orientation on $C^n.$ The dual complex
  will map any $n-$cell on $\pm C^{n+1}$ which means that $C^{n+1}$
  acts as accumulator for the dual complex which is now a
  script. Necessary for this is that the script is orientable (counter-example:
  Klein bottle)
\end{enumerate}

Observations concerning metrics and duality in the two-dimensional case:

\begin{enumerate}
\item Every tight unitary 2D-script can be identified with a CW-complex
  and, therefore, can be given a unique topology.
\item Every line has two end points and every plane element is a
  polygon.
\item Suppose that a 2D-script is orientable. We then can create a
3-cell $C^3$ such that $\partial C^3 = \sum_j C^2_j.$ In this case we
know that this 3-cell will become the accumulator of the dual complex while also the accumulator of the original complex will become an extra 3-cell of the dual complex.
\end{enumerate}

We know come to a key theorem for three-dimensional scripts arising from two-dimensional scripts with an extra 3D-cell attached.

\begin{theorem}
  If a three-dimensional script consisting of a single 3-cell is unitary and tight and the dual script
  is also unitary and tight then the CW complex associated to the
  $2-$dimensional subscript constructed above becomes an orientable,
  connected, compact (topological) manifold of dimension $2$.
\end{theorem}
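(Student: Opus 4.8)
The plan is to study the $2$-dimensional subscript $X$ obtained by deleting the unique top cell $C^3$, and to check directly the local and global conditions that make its associated CW complex a closed orientable surface. First I would observe that $X$ is again tight and unitary: tightness of a point, line, or $2$-cell refers only to $\mathcal{Z}_{k-1}$ of its $\rb$, which is unaffected by removing $C^3$, and unitarity of a $2$-dimensional tight script is Corollary~\ref{Cor:2.001}. Hence by Theorem~\ref{Th:2.001} and the remark (following Theorem~\ref{Th:2.004}) that every $2$D tight script underlies a regular CW complex, $X$ is such a complex in which every $2$-cell is an $n$-polygon, i.e.\ a disk. Writing $\partial C^3=\sum_j\lambda_jv_j$ with $\lambda_j=\pm1$ (a finite chain, since chains are finite sums), $X$ is a finite, hence compact, complex; and since $\mathcal{C}_2=\rb C^3$, $\mathcal{C}_1=\rb^2C^3$, $\mathcal{C}_0=\rb^3C^3$, it has no redundant cells.

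The heart of the argument is to feed the tightness of the \emph{dual} script into the local structure of $X$. Under duality an original line $\ell$ is a dual $1$-cell with $\partial'\ell=d\ell=\sum_j\langle\ell,\partial v_j\rangle\,v_j$, the sum over the original $2$-cells (now dual points) containing $\ell$; the pathological case $\partial'\ell=0$ does not occur because $\mathcal{C}_1\subset\rb(\mathcal{C}_2)$. Since the dual is tight and, by Lemma~\ref{lemma unitary script}, unitary, Theorem~\ref{Lm:2.002} forces $\partial'\ell=v_b-v_a$ for two distinct dual points, so \emph{every line of $X$ lies on exactly two $2$-cells}. Likewise an original point $p$ is a dual $2$-cell, and tightness of the dual together with Theorem~\ref{Th:2.001} gives $\partial'p=dp=\ell_1+\cdots+\ell_n$, an $n$-polygon whose lines are exactly the lines of $X$ through $p$ and whose (distinct) dual points $v_0,\dots,v_{n-1}$ are exactly the $2$-cells of $X$ meeting $p$, with $\partial'\ell_i=v_i-v_{i-1}$. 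Reading this back in $X$, the faces and edges at $p$ arrange in one cyclic sequence $v_0,\ell_1,v_1,\ell_2,\dots,v_{n-1},\ell_n$ with $v_i\supset\ell_i,\ell_{i+1}$, so the link of $p$ is a single circle and $p$ has a disk neighborhood. With every edge on exactly two polygonal faces, this makes $X$ a closed $2$-manifold.

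For connectedness, tightness of $C^3$ says $\mathcal{Z}_2(\rb C^3)$ is generated by the single cycle $\partial C^3=\sum_j\lambda_jv_j$; were $X=X'\sqcup X''$ the faces in the two pieces would use disjoint sets of lines, so $\sum_{v_j\in X'}\lambda_jv_j$ and $\sum_{v_j\in X''}\lambda_jv_j$ would be two independent elements of $\mathcal{Z}_2(\rb C^3)$, a contradiction. For orientability, the relation $\partial^2C^3=0$ with $\lambda_j=\pm1$ says precisely that the faces $\lambda_jv_j$ glue coherently: for each edge the two incident faces (from the previous paragraph) induce opposite orientations on it, since the coefficient of that edge in $\sum_j\lambda_j\partial v_j$ vanishes. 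A connected closed surface admitting such a coherent orientation of all its cells is orientable; alternatively one may invoke the remark of this section that the dual of a script is a script exactly when the original is orientable. The step I expect to be the real work is the middle paragraph's last sentence — turning the dual polygon $\partial'p$ into a genuine circular link and verifying that the distinctness clauses of Definition~\ref{def:2.001} rule out every degenerate configuration (a face touching $p$ twice, several link components); everything else is bookkeeping with the definitions already in place.
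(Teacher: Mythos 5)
Your proposal is correct and follows essentially the same route as the paper's proof: tightness of the $2$--cells gives polygonal (disk) faces, tightness of the dual at lines gives exactly two incident faces per edge, tightness of the dual at points gives the cyclic link (dual polygon) and hence disk neighborhoods, and tightness of $\partial C^3$ (a unitary cycle, unique up to sign) yields connectedness and orientability. You simply make explicit the citations (Theorems~\ref{Lm:2.002} and~\ref{Th:2.001} applied to the dual script) and the coherent-orientation argument from $\partial^2C^3=0$ that the paper leaves implicit.
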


\begin{proof}
  Orientability and connectedness follow from $\partial C^3$ being tight
  (generated by a unitary cycle which unique up to the sign).

 From the assumption that $\partial C^3$ is tight follows the orientability and connectedness, indeed, in this case $\partial C^3$ is generated by a unitary cycle which is unique up to the sign.

 We also have that the 2D-subscript is a CW complex due to the fact that it is tight. Remains to be proven
  that every point in this CW-complex has a local neighborhood homeomorphic to the unit disk.

  Case 1. The point belongs to a $2-$dimensional face $C^2$ of the
  CW-complex. In this case since $C^2$ is tight its boundary is homeomorphic to a polygon so its cell is homeomorphic to a disk containing that point.

  Case 2.  The point belongs to one of the lines $C^1_j$. Due to the
  tightness of the dual boundary operator $d$ acting on $C^1_j$, $C^1_j$ connects two
  two-dimensional faces. Therefore,
   $$C^1_j \in \partial C^2_k \quad and \quad C^1_j \in \partial C^2_l, ~k\not= l. $$

  Clearly, the union of $C^1_j$ and these two faces is a neighborhood homeomorphic to the disk.

  Case 3.  The point is one of the points $C^0_j.$ In that case every
  point is the dual of a solid polygon. This means that there are a
  number of lines and $2-$cells issuing from that point that can be
  ordered in the following form: line $\ell_1,$ face $v_1,$ line
  $\ell_2,$ face $v_2,$ \ldots line $\ell_{k-1},$ face $v_{k-1},$ line
  $\ell_1.$

  They form a dual of a polygon which is also a disk. This concludes
  the proof.

\end{proof}

\textbf{Remark:} Every tightness condition has been used in the
proof.

Clearly, the Klein bottle and the projective plane are non-orientable and the dual of their complex is also not a tight script.


\section{Discrete Dirac Operators on Scripts}

\begin{definition}
  The {\em Dirac Operator} in this case is given by
  $\slashed\partial =d+\partial$.
\end{definition}

\begin{remark}
  Here the Dirac operator acts on sums of chains of different
  dimensions.
\end{remark}

\begin{definition}
  The {\em discrete Hodge-Laplace Operator} in this case is given by
  $\slashed\partial^2=d\partial +\partial d$.
\end{definition}

\begin{definition}
  In this case the {\em harmonic and monogenic functions respectively}
  are solutions of $\slashed\partial^2 F=0$ and $\slashed\partial F=0$
  respectively.  They will be linear combinations of all $C_j^k$'s or
  subsets of them.
\end{definition}

\begin{remark}
  In particular there could be harmonic and monogenic functions
  corresponding to chains of a given dimension $k$, for example a
  $k-$chain $F$ is monogenic iff it satisfies the Hodge system:
  $$
  dF=\partial F=0.
  $$
  However, note that not all monogenic chains are sums of solutions of
  the Hodge system.
\end{remark}

\begin{definition}
  The {\em sound} of a script is the sum all eigenvalues of the Hodge
  Laplacian.
\end{definition}

\begin{remark}
As both the boundary operator $\partial$ and its dual $d$ are linear operators, we can describe them using matrices. Doing so, we can determine each entry of these matrices using the metric defined in Definition \ref{definition metric}, which implies that the matrix representation for $d$ is the transpose of the matrix for $\partial$.\\

Hence if we are looking for monogenic or harmonic functions, we can calculate the eigenvalues and eigenvectors of the Dirac operator and the Laplace operator.
\end{remark}

\begin{remark}
We have the following:

\begin{align*}
\langle \slashed{\partial}^2 C_i^k,C_j^l\rangle &=\langle \slashed{\partial} C_i^k,\slashed{\partial} C_j^l\rangle\\
&=\langle \partial C_i^k,\partial C_j^l\rangle + \langle d C_i^k,d C_j^l\rangle\\
\end{align*}

This is only non-zero if $k = l$, this means that the Laplace operator sends $k$-chains to $k$-chains, i.e. the matrix representation of $\slashed{\partial}^2$ will be a block diagonal matrix.
\end{remark}

\begin{example}[Addition in $\Z$]

In the general case where the dual isn't a script, we had $d1 = \sum_{j}p_j$ and $\partial p_j = 1$. Thus we have:

\begin{align*}
\slashed{\partial}(\mu + \sum_{j=1}^n \lambda_j p_j) &= (d+\partial)(\mu + \sum_{j=1}^n \lambda_j p_j)\\
&= \underbrace{\left(\begin{matrix}
0 & 1 & 1 & \cdots & 1\\
1 & 0 & 0 & \cdots & 0\\
1 & 0 & 0 & \cdots & 0\\
\vdots & \vdots & \vdots & \ddots & \vdots\\
1 & 0 & 0 & \cdots & 0
\end{matrix}\right)}_{:=A}\left(\begin{matrix}
\mu\\
\lambda_1\\
\lambda_2\\
\vdots\\
\lambda_n\\
\end{matrix}\right)
\end{align*}

It has eigenvalues $0,\sqrt{n}, -\sqrt{n}$ with multiplicities $n-1, 1,1$ respectively. Thus all corresponding eigenvectors of eigenvalue 0 (Monogenic functions) are linear combinations of $p_1 - p_j$ for $j=2,\ldots,n$.\\
For harmonic functions we need to look at the eigenvalues of

\begin{displaymath}
A^2 = \left(\begin{matrix}
n & 0 & 0 & \cdots & 0\\
0 & 1 & 1 & \cdots & 1\\
0 & 1 & 1 & \cdots & 1\\
\vdots & \vdots & \vdots & \ddots & \vdots\\
0 & 1 & 1 & \cdots & 1
\end{matrix}\right)
\end{displaymath}
which are $0,n$ with multiplicities $n-1, 2$ respectively and the same eigenvectors. The sound here is $2n$.

\end{example}

\begin{example}[An interval]

We have $dp = -dq = \ell$, $d1 = p+q$, $\partial \ell = p-q$ and $\partial p = \partial q = 1$. This yields

\begin{align*}
\slashed{\partial}(\mu + \lambda_1 p + \lambda_2 q + \nu \ell) &= (d+\partial)(\mu + \lambda_1 p + \lambda_2 q + \nu \ell)\\
&= \underbrace{\left(\begin{matrix}
0 & 1 & 1 & 0\\
1 & 0 & 0 & 1\\
1 & 0 & 0 & -1\\
0 & 1 & -1 & 0
\end{matrix}\right)}_{:=A}\left(\begin{matrix}
\mu\\
\lambda_1\\
\lambda_2\\
\nu
\end{matrix}\right)
\end{align*}

which has eigenvalues $\sqrt{2}, -\sqrt{2}$ both with multiplicity 2. Hence the only monogenic function is 0. Squaring this matrix yields

\begin{displaymath}
A^2 = \left(\begin{matrix}
2 & 0 & 0 & 0\\
0 & 2 & 0 & 0\\
0 & 0 & 2 & 0\\
0 & 0 & 0 & 2
\end{matrix}\right)
\end{displaymath}

It is now trivial to see that the sound is 8.

\end{example}

\begin{example}[$m-$sphere]

We have the following matrix representation of the Dirac operator for the $m$-sphere

\begin{align*}
A = \left(\begin{matrix}
 & 1 & 1\\
1 & & & 1 & 1 \\
1 & & & -1 & -1 \\
& 1 & -1 & & & \ddots\\
& 1 & -1 & & & & \ddots\\
& & & \ddots & & & & 1 & 1 &\\
& & & & \ddots & & & -1 & -1 &\\
& & & & & 1 & -1\\
& & & & & 1 & -1
\end{matrix}\right)
\end{align*}

where the empty spaces are zeros. Simple calculation yields

\begin{displaymath}
A^2 = \left(\begin{matrix}
2\\
& 3 & -1\\
& -1 & 3\\
& & & 4\\
& & & & \ddots\\
& & & & & 4\\
& & & & & & 2 & 2\\
& & & & & & 2 & 2
\end{matrix}\right)
\end{displaymath}

Hence it is easy to see that $A^2$ only has 0,4,2 as eigenvalues with multiplicity $1,2m, 2$. The sound is thus $8m+4$. Moreover the only monogenic and harmonic functions are multiples of $C_1^m-C_2^m$. These are the only functions where $\partial F = dF = 0$.\\

If we add the extra cell in order to make the dual a script, we get the following matrix for the Dirac operator on the $m$-dimensional ball:

\begin{align*}
A = \left(\begin{matrix}
 & 1 & 1\\
1 & & & 1 & 1 \\
1 & & & -1 & -1 \\
& 1 & -1 & & & \ddots\\
& 1 & -1 & & & & \ddots\\
& & & \ddots & & & & 1 & 1 &\\
& & & & \ddots & & & -1 & -1 &\\
& & & & & 1 & -1 & & & 1\\
& & & & & 1 & -1 & & & -1\\
& & & & & & & 1 & -1
\end{matrix}\right)
\end{align*}

The matrix for the Laplace operator becomes

\begin{displaymath}
A^2 = \left(\begin{matrix}
2\\
& 3 & -1\\
& -1 & 3\\
& & & 4\\
& & & & \ddots\\
& & & & & 4\\
& & & & & & 3 & 1\\
& & & & & & 1 & 3\\
& & & & & & & & 2
\end{matrix}\right)
\end{displaymath}

which has eigenvalues 2,4 with multiplicity $4,2m$ respectively. The sound is $8m+8$ and there are no non-zero monogenics or harmonics.

\end{example}

\begin{example}[Simplexes]

Note that for a $m$-simplex, we have that the matrix representation of $\slashed{\partial}$ will only contain $0,1,-1$. In order to calculate the eigenvalues of the Laplace operator we will look at

\begin{displaymath}
\alpha_{i,j}^k = \langle  C_i^k,\slashed{\partial}^2 C_j^k\rangle =\langle C_i^k,d\partial C_j^k\rangle + \langle C_i^k,\partial d C_j^k\rangle
\end{displaymath}

If $i=j$, then

\begin{displaymath}
\alpha_{i,j}^k = (k + 1) + (m-k) = m+1
\end{displaymath}

If $[\alpha_0,\ldots,\alpha_k] = C_i^k\neq C_j^k = [\beta_0,\ldots,\beta_k]$, then we have two possibilities:

\begin{enumerate}[(i)]
\item
$|\{\alpha_0,\ldots,\alpha_k\}\setminus\{\beta_0,\ldots,\beta_k\}| \geq 2$. Applying $d$ results in a linear combination of $C_j^k$ with 1 extra number and $\partial$ is a linear combination of $C_j^k$ with 1 number less, so $\partial d$ will change at most only 1 $\alpha_l$ and the same for $d \partial$, thus we have $\langle C_i^k,d\partial C_j^k\rangle = \langle C_i^k,\partial d C_j^k\rangle = 0$.
\item
$|\{\alpha_0,\ldots,\alpha_k\}\setminus\{\beta_0,\ldots,\beta_k\}|=1$, thus there is a unique $l,l'$ such that $\alpha_{l} \neq \beta_{l'}$. Using the same reasoning as in (ii), we only need to look at the term where we deleted $\beta_{l'}$ and replaced it with $\alpha_l$. Hence we have

\begin{align*}
\alpha_{i,j}^k =& \langle \partial C_i^k,\partial C_j^k\rangle + \langle C_i^k,\partial d C_j^k\rangle\\
=& (-1)^{l+l'} + \langle C_{i}^k, \partial d C_j^k \rangle\\
=& (-1)^{l+l'} + (-1)^{l+l'+1}\\
=& 0
\end{align*}

Hence the matrix representation of $\slashed{\partial}^2$ is a diagonal matrix with $m+1$ on the diagonal. Hence there are no harmonic or monogenic functions except for 0. The sound of an $m$-simplex is $2^{m+1}(m+1)$.
\end{enumerate}

\end{example}

\begin{example}[A $2-$torus]

Let $A_k$ denote the matrix representation of $\partial$ applied on $\mathcal{C}_k$, we have:

\begin{align*}
A_0&=\left(\begin{matrix}
1 & 1 & 1 & 1
\end{matrix}\right)\\
A_1&=\left(\begin{matrix}
-1 & 1 & -1 & 1 & 0 & 0 & 0 & 0\\
1 & -1 & 0 & 0 & 0 & 0 & -1 & 1\\
0 & 0 & 1 & -1 & -1 & 1 & 0 & 0\\
0 & 0 & 0 & 0 & 1 & -1 & 1 & -1
\end{matrix}\right)\\
A_2&=\left(\begin{matrix}
-1 & 0 & 1 & 0\\
0 & -1 & 0 & 1\\
0 & 0 & -1 & 1\\
-1 & 1 & 0 & 0\\
1 & 0 & -1 & 0\\
0 & 1 & 0 & -1\\
0 & 0 & 1 & -1\\
1 & -1 & 0 & 0
\end{matrix}\right)\\
A_3&=\left(\begin{matrix}
1\\
1\\
1\\
1
\end{matrix}\right)
\end{align*}

Hence for the matrix representation of $\slashed{\partial}$ we get

\begin{displaymath}
A:=\left(\begin{matrix}
0 & A_0 & 0 & 0 & 0\\
A_0^T & 0 & A_1 & 0 & 0\\
0 & A_1^T & 0 & A_2 & 0\\
0 & 0 & A_2^T & 0 & A_3\\
0 & 0 & 0 & A_3^T & 0
\end{matrix}\right)
\end{displaymath}

Which has eigenvalues $-2\sqrt{2},-2,0,2,2\sqrt{2}$ with multiplicity 2,6,2,6,2 respectively. The monogenic functions are linear combinations of $\ell_1+\ell_2+\ell_5+\ell_6$ and $\ell_3+\ell_4+\ell_7+\ell_8$.\\
The action of $\slashed{\partial}^2$ will have the same eigenvectors with the square of the corresponding eigenvalue. Hence the sound is equal to 80.

\end{example}

\begin{example}[Klein bottle]

Let $A_k$ denote the matrix representation of $\partial$ applied on $\mathcal{C}_k$, we have:

\begin{align*}
A_0&=\left(\begin{matrix}
1 & 1 & 1 & 1
\end{matrix}\right)\\
A_1&=\left(\begin{matrix}
-1 & 1 & -1 & 1 & 0 & 0 & 0 & 0\\
1 & -1 & 0 & 0 & -1 & 1 & 0 & 0\\
0 & 0 & 1 & -1 & 0 & 0 & -1 & 1\\
0 & 0 & 0 & 0 & 1 & -1 & 1 & -1
\end{matrix}\right)\\
A_2&=\left(\begin{matrix}
0 & -1 & -1 & 0\\
-1 & 0 & 0 & -1\\
-1 & 0 & 1 & 0\\
0 & -1 & 0 & 1\\
1 & 0 & -1 & 0\\
0 & 1 & 0 & -1\\
0 & 0 & 1 & -1\\
1 & -1 & 0 & 0
\end{matrix}\right)
\end{align*}

Hence for the matrix representation of $\slashed{\partial}$ we get

\begin{displaymath}
A:=\left(\begin{matrix}
0 & A_0 & 0 & 0\\
A_0^T & 0 & A_1 & 0\\
0 & A_1^T & 0 & A_2\\
0 & 0 & A_2^T & 0\\
\end{matrix}\right)
\end{displaymath}

where the zeros are zero matrices. It has eigenvalues $-2\sqrt{2},-2,-\sqrt{2},0,\sqrt{2},2,2\sqrt{2}$ with multiplicity $2,4,2,1,2,4,2$ respectively. The monogenic functions are multiples of $\ell_3+\ell_4+\ell_5+\ell_6$.\\
The action of $\slashed{\partial}^2$ will have the same eigenvectors with the square of the corresponding eigenvalue. Hence the sound is equal to 72.

\end{example}

\begin{example}[Extended Klein bottle]

If we just add the cell $v_5$ with $\partial v_5 = \ell_1+\ell_2$, then the only thing that changes with respect to the previous example is the matrix $A_2$:

\begin{displaymath}
A_2=\left(\begin{matrix}
0 & -1 & -1 & 0 & 1\\
-1 & 0 & 0 & -1 & 1\\
-1 & 0 & 1 & 0 & 0\\
0 & -1 & 0 & 1 & 0\\
1 & 0 & -1 & 0 & 0\\
0 & 1 & 0 & -1 & 0\\
0 & 0 & 1 & -1 & 0\\
1 & -1 & 0 & 0 & 0
\end{matrix}\right)
\end{displaymath}

Now calculating the eigenvalues of Dirac operator we get $-2\sqrt{2}, -2, -\sqrt{2},0,\sqrt{2},2,2\sqrt{2}$ with multiplicities $2,5,1,2,1,5,2$. The monogenic functions are linear combinations of $\ell_3+\ell_4+\ell_5+\ell_6$ and $v_1+v_2+v_3+v_4+2v_5$. An easy calculations shows that the sound is 76.\\

Adding an extra cell $C$ such that $\partial C = v_1 + v_2 + v_3 + v_4 + 2v_5$, yields an extra matrix:

\begin{displaymath}
A_3 = \left(\begin{matrix}
1\\
1\\
1\\
1\\
2
\end{matrix}\right)
\end{displaymath}

In this case, the total matrix representation of the Dirac operator becomes

\begin{displaymath}
A=\left(\begin{matrix}
0 & A_0 & 0 & 0 & 0\\
A_0^T & 0 & A_1 & 0 & 0\\
0 & A_1^T & 0 & A_2 & 0\\
0 & 0 & A_2^T & 0 & A3\\
0 & 0 & 0 & A_3^T & 0
\end{matrix}\right)
\end{displaymath}

Now we have Eigenvalues $-2\sqrt{2},-2,-\sqrt{2},0,\sqrt{2},2,2\sqrt{2}$ with multiplicities $3,5,1,1,1,5,3$. The sound here is equal to 92 and the monogenic and harmonic functions are multiples of $\ell_3+\ell_4+\ell_5+\ell_6$.

\end{example}

\begin{example}[Projective plane]

Let $A_k$ denote the matrix representation of $\partial$ applied on $\mathcal{C}_k$, we have:

\begin{align*}
A_0&=\left(\begin{matrix}
1 & 1 & 1
\end{matrix}\right)\\
A_1&=\left(\begin{matrix}
-1 & 1 & -1 & 1 & 0 & 0\\
1 & -1 & 0 & 0 & -1 & 1\\
0 & 0 & 1 & -1 & 1 & -1
\end{matrix}\right)\\
A_2&=\left(\begin{matrix}
0 & -1 & -1 & 0\\
-1 & 0 & 0 & -1\\
-1 & 0 & 1 & 0\\
0 & -1 & 0 & 1\\
1 & -1 & 0 & 0\\
0 & 0 & 1 & -1
\end{matrix}\right)
\end{align*}

Hence for the matrix representation of $\slashed{\partial}$ we get

\begin{displaymath}
A:=\left(\begin{matrix}
0 & A_0 & 0 & 0\\
A_0^T & 0 & A_1 & 0\\
0 & A_1^T & 0 & A_2\\
0 & 0 & A_2^T & 0\\
\end{matrix}\right)
\end{displaymath}

where the zeros are zero matrices. It has eigenvalues $-\sqrt{6},-\sqrt{3},-\sqrt{2},\sqrt{2},\sqrt{3},\sqrt{6}$ with multiplicity $3,1,3,3,1,3$ respectively. There are no non-zero monogenic functions.\\
The eigenvalues of $\slashed{\partial}^2$ will be the square of the eigenvalues of $\slashed\partial$. Hence the sound is equal to 54.

\end{example}

\begin{example}[Extended projective plane]
If we just add the cell $v_5$ with $\partial v_5 = \ell_1+\ell_2$, then the only thing that changes is the matrix $A_2$:

\begin{displaymath}
A_2=\left(\begin{matrix}
0 & -1 & -1 & 0 & 1\\
-1 & 0 & 0 & -1 & 1\\
-1 & 0 & 1 & 0 & 0\\
0 & -1 & 0 & 1 & 0\\
1 & -1 & 0 & 0 & 0\\
0 & 0 & 1 & -1 & 0
\end{matrix}\right)
\end{displaymath}

Now calculating the eigenvalues of Dirac operator we get $-\sqrt{6},-2,-\sqrt{3},-\sqrt{2},0,\sqrt{2},\sqrt{3},2,\sqrt{6}$ with multiplicities $3,1,1,2,1,2,1,1,3$. The monogenic functions are multiples of $v_1+v_2+v_3+v_4+2v_5$. An easy calculations shows that the sound is 58.

Adding an extra cell $C$ such that $\partial C = v_1 + v_2 + v_3 + v_4 + 2v_5$, yields an extra matrix:

\begin{displaymath}
A_3 = \left(\begin{matrix}
1\\
1\\
1\\
1\\
2
\end{matrix}\right)
\end{displaymath}

and the total matrix representation of the Dirac operator becomes

\begin{displaymath}
A=\left(\begin{matrix}
0 & A_0 & 0 & 0 & 0\\
A_0^T & 0 & A_1 & 0 & 0\\
0 & A_1^T & 0 & A_2 & 0\\
0 & 0 & A_2^T & 0 & A3\\
0 & 0 & 0 & A_3^T & 0
\end{matrix}\right)
\end{displaymath}

Now we have Eigenvalues $-2\sqrt{2},-\sqrt{6},-2,-\sqrt{3},-\sqrt{2},\sqrt{2},\sqrt{3},2,\sqrt{6},2\sqrt{2}$ with multiplicities $1,3,1,1,2,2,1,1,3,1$. The sound here is equal to 74.

\end{example}

\begin{example}[Moebius strip]

Let $A_k$ denote the matrix representation of $\partial$ applied on $\mathcal{C}_k$, we have:

\begin{align*}
A_0&=\left(\begin{matrix}
1 & 1 & 1 & 1
\end{matrix}\right)\\
A_1&=\left(\begin{matrix}
-1 & 0 & -1 & 0 & 0 & 1\\
1 & 0 & 0 & 1 & -1 & 0\\
0 & -1 & 1 & -1 & 0 & 0\\
0 & 1 & 0 & 0 & 1 & -1
\end{matrix}\right)\\
A_2&=\left(\begin{matrix}
-1 & -1\\
1 & -1\\
1 & 0\\
0 & 1\\
-1 & 0\\
0 & -1
\end{matrix}\right)
\end{align*}

Hence for the matrix representation of $\slashed{\partial}$ we get

\begin{displaymath}
A:=\left(\begin{matrix}
0 & A_0 & 0 & 0\\
A_0^T & 0 & A_1 & 0\\
0 & A_1^T & 0 & A_2\\
0 & 0 & A_2^T & 0\\
\end{matrix}\right)
\end{displaymath}

where the zeros are zero matrices. It has eigenvalues $-2,0,2$ with multiplicity $6,1,6$ respectively. The monogenic functions are multiples of $\ell_3 + \ell_4 + \ell_5 + \ell_6$.\\
The action of $\slashed{\partial}^2$ has eigenvalues $0,4$ with multiplicity $1,12$ respectively. The harmonic functions are also multiples of $\ell_3 + \ell_4 + \ell_5 + \ell_6$. The sound is equal to 48.

\end{example}

\begin{example}[Another projective plane model]

Let $A_k$ denote the matrix representation of $\partial$ applied on $\mathcal{C}_k$, we have:

\begin{align*}
A_0&=\left(\begin{matrix}
1 & 1 & 1 & 1
\end{matrix}\right)\\
A_1&=\left(\begin{matrix}
-1 & 0 & -1 & 0 & 0 & 1\\
1 & 0 & 0 & 1 & -1 & 0\\
0 & -1 & 1 & -1 & 0 & 0\\
0 & 1 & 0 & 0 & 1 & -1
\end{matrix}\right)\\
A_2&=\left(\begin{matrix}
-1 & -1 & 0\\
1 & -1 & 0\\
1 & 0 & 1\\
0 & 1 & 1\\
-1 & 0 & 1\\
0 & -1 & 1
\end{matrix}\right)
\end{align*}

Hence for the matrix representation of $\slashed{\partial}$ we get

\begin{displaymath}
A:=\left(\begin{matrix}
0 & A_0 & 0 & 0\\
A_0^T & 0 & A_1 & 0\\
0 & A_1^T & 0 & A_2\\
0 & 0 & A_2^T & 0\\
\end{matrix}\right)
\end{displaymath}

where the zeros are zero matrices. It has eigenvalues $-2,2$ with multiplicity $7,7$ respectively. There are no non-zero monogenic functions.\\
The action of $\slashed{\partial}^2$ has eigenvalue $4$ with multiplicity $14$. There are no non-zero harmonic functions. The sound is equal to 56.

\end{example}

\begin{example}[Another Klein bottle]

Let $A_k$ denote the matrix representation of $\partial$ applied on $\mathcal{C}_k$, we have:

\begin{align*}
A_0&=\left(\begin{matrix}
1 & 1 & 1 & 1
\end{matrix}\right)\\
A_1&=\left(\begin{matrix}
-1 & 0 & -1 & 0 & 0 & 1 & 1 & 0\\
1 & 0 & 0 & -1 & 1 & 0 & -1 & 0\\
0 & -1 & 1 & 0 & -1 & 0 & 0 & 1\\
0 & 1 & 0 & 1 & 0 & -1 & 0 & -1
\end{matrix}\right)\\
A_2&=\left(\begin{matrix}
-1 & -1 & 0 & 0\\
1 & -1 & 0 & 0\\
1 & 0 & -1 & 0\\
-1 & 0 & 1 & 0\\
0 & 1 & 0 & -1\\
0 & -1 & 0 & 1\\
0 & 0 & -1 & -1\\
0 & 0 & 1 & -1
\end{matrix}\right)
\end{align*}

Hence for the matrix representation of $\slashed{\partial}$ we get

\begin{displaymath}
A:=\left(\begin{matrix}
0 & A_0 & 0 & 0\\
A_0^T & 0 & A_1 & 0\\
0 & A_1^T & 0 & A_2\\
0 & 0 & A_2^T & 0\\
\end{matrix}\right)
\end{displaymath}

where the zeros are zero matrices. It has eigenvalues $-\sqrt{6},-2,-\sqrt{2},0,\sqrt{2},2,\sqrt{6}$ with multiplicity $4,2,2,1,2,2,4$ respectively. The monogenic functions are multiples of $\ell_3 + \ell_4 + \ell_5 + \ell_6$.\\
The action of $\slashed{\partial}^2$ has eigenvalues $0,2,4,6$ with multiplicity $1,4,4,8$ respectively. The harmonic functions are also multiples of $\ell_3 + \ell_4 + \ell_5 + \ell_6$. The sound is equal to 72.
\end{example}

\begin{example}[3D-world without 2D-disk portal]

Let $A_k$ denote the matrix representation of $\partial$ applied on $\mathcal{C}_k$, we have:

\begin{align*}
A_0&=\left(\begin{matrix}
1 & 1 & 1 & 1
\end{matrix}\right)\\
A_1&=\left(\begin{matrix}
-1 & -1 & 0 & 0 & -1 & -1 & 0 & 0\\
1 & 1 & 0 & 0 & 0 & 0 & 1 & 1\\
0 & 0 & -1 & -1 & 1 & 1 & 0 & 0\\
0 & 0 & 1 & 1 & 0 & 0 & -1 & -1
\end{matrix}\right)\\
A_2&=\left(\begin{matrix}
-1 & -1 & 0 & 0 & -1 & -1 & 0 & 0\\
1 & 1 & 0 & 0 & 0 & 0 & 1 & 1\\
0 & 0 & -1 & -1 & 1 & 0 & 1 & 0\\
0 & 0 & 1 & 1 & 0 & -1 & 0 & -1\\
0 & 0 & 0 & 0 & 1 & -1 & 0 & 0\\
0 & 0 & 0 & 0 & 0 & 0 & 1 & -1\\
0 & 0 & 0 & 0 & 1 & -1 & 0 & 0\\
0 & 0 & 0 & 0 & 0 & 0 & 1 & -1
\end{matrix}\right)\\
A_3&=\left(\begin{matrix}
-1 & 0 & -1 & 0\\
0 & 1 & 0 & 1\\
1 & 0 & 0 & -1\\
0 & -1 & 1 & 0\\
1 & -1 & 0 & 0\\
1 & -1 & 0 & 0\\
0 & 0 & 1 & -1\\
0 & 0 & 1 & -1
\end{matrix}\right)
\end{align*}

Hence for the matrix representation of $\slashed{\partial}$ we get

\begin{displaymath}
A:=\left(\begin{matrix}
0 & A_0 & 0 & 0 & 0\\
A_0^T & 0 & A_1 & 0 & 0\\
0 & A_1^T & 0 & A_2 & 0\\
0 & 0 & A_2^T & 0 & A_3\\
0 & 0 & 0 & A_3^T & 0
\end{matrix}\right)
\end{displaymath}

where the zeros are zero matrices. It has eigenvalues $-2\sqrt{2},-2,-\sqrt{2},0,\sqrt{2},2,2\sqrt{2}$ with multiplicity $4,6,2,1,2,6,4$ respectively. The monogenic functions are multiples of $lt_1 - lb_1 - lt_2 + lb_2$.\\
The action of $\slashed{\partial}^2$ has eigenvalues $0,2,4,8$ with multiplicity $1,4,12,8$ respectively. The harmonic functions are also multiples of $lt_1 - lb_1 - lt_2 + lb_2$. The sound is equal to 120.

\end{example}

\begin{example}[3D-world with 2D-disk portal]

In this case $A_2$ and $A_3$ change to

\begin{align*}
A_2&=\left(\begin{matrix}
-1 & -1 & 0 & 0 & 0 & 0 & -1 & -1 & 0 & 0\\
1 & 1 & 0 & 0 & 0 & 0 & 0 & 0 & 1 & 1\\
0 & 0 & -1 & -1 & -1 & -1 & 1 & 0 & 1 & 0\\
0 & 0 & 1 & 1 & 1 & 1 & 0 & -1 & 0 & -1\\
0 & 0 & 0 & 0 & 0 & 0 & 1 & -1 & 0 & 0\\
0 & 0 & 0 & 0 & 0 & 0 & 0 & 0 & 1 & -1\\
0 & 0 & 0 & 0 & 0 & 0 & 1 & -1 & 0 & 0\\
0 & 0 & 0 & 0 & 0 & 0 & 0 & 0 & 1 & -1
\end{matrix}\right)\\
A_3&=\left(\begin{matrix}
-1 & 0 & -1 & 0\\
0 & 1 & 0 & 1\\
1 & 0 & 0 & 0\\
0 & 0 & 1 & 0\\
0 & -1 & 0 & 0\\
0 & 0 & 0 & -1\\
1 & -1 & 0 & 0\\
1 & -1 & 0 & 0\\
0 & 0 & 1 & -1\\
0 & 0 & 1 & -1
\end{matrix}\right)
\end{align*}

So now the Dirac operator has eigenvalues

\begin{align*}
\begin{array}{l|l}
\text{Eigenvalues} & \text{multiplicity}\\
\hline
-\sqrt{8+2\sqrt{2}} & 1\\
-\sqrt{7} & 1\\
-\sqrt{8-2\sqrt{2}}  & 1\\
-2\sqrt{2} & 2\\
-\sqrt{5} & 1\\
-2 & 4\\
-\sqrt{3} & 1\\
-1 & 1\\
0 & 3\\
1 & 1\\
\sqrt{3} & 1\\
2 & 4\\
\sqrt{5} & 1\\
2\sqrt{2} & 2\\
\sqrt{8-2\sqrt{2}}  & 1\\
\sqrt{7} & 1\\
\sqrt{8+2\sqrt{2}} & 1
\end{array}
\end{align*}

The monogenic functions are linear combinations of $lt_1 - lb_1 - lt_2 + lb_2$, $-vi_1 + vi_2-vp_1-vp_2+vp_3+vp_4$ and $-2vi_1+2vi_2-4vp_2+4vp_3-vt_1-vt_2+vb_1+vb_2$.\\
The eigenvalues of the Laplace operator are the square of the eigenvalues of the Dirac operator. The harmonic functions are also linear combinations of $lt_1 - lb_1 - lt_2 + lb_2$, $-vi_1 + vi_2-vp_1-vp_2+vp_3+vp_4$ and $-2vi_1+2vi_2-4vp_2+4vp_3-vt_1-vt_2+vb_1+vb_2$. The sound is equal to 128.

\end{example}

\begin{example}[The pentagon model of the projective plane]

Let $A_k$ denote the matrix representation of $\partial$ applied on $\mathcal{C}_k$, we have:

\setcounter{MaxMatrixCols}{15}
{\footnotesize\begin{align*}
A_0&=\left(\begin{matrix}
1 & 1 & 1 & 1 & 1 & 1 & 1 & 1 & 1 & 1
\end{matrix}\right)\\
A_1&=\left(\begin{matrix}
-1 & 0 & 0 & 0 & 1 & 1 & 0 & 0 & 0 & 0 & 0 & 0 & 0 & 0 & 0\\
1 & -1 & 0 & 0 & 0 & 0 & 1 & 0 & 0 & 0 & 0 & 0 & 0 & 0 & 0\\
0 & 1 & -1 & 0 & 0 & 0 & 0 & 1 & 0 & 0 & 0 & 0 & 0 & 0 & 0\\
0 & 0 & 1 & -1 & 0 & 0 & 0 & 0 & 1 & 0 & 0 & 0 & 0 & 0 & 0\\
0 & 0 & 0 & 1 & -1 & 0 & 0 & 0 & 0 & 1 & 0 & 0 & 0 & 0 & 0\\
0 & 0 & 0 & 0 & 0 & -1 & 0 & 0 & 0 & 0 & -1 & 0 & 0 & 1 & 0\\
0 & 0 & 0 & 0 & 0 & 0 & -1 & 0 & 0 & 0 & 0 & -1 & 0 & 0 & 1\\
0 & 0 & 0 & 0 & 0 & 0 & 0 & -1 & 0 & 0 & 1 & 0 & -1 & 0 & 0\\
0 & 0 & 0 & 0 & 0 & 0 & 0 & 0 & -1 & 0 & 0 & 1 & 0 & -1 & 0\\
0 & 0 & 0 & 0 & 0 & 0 & 0 & 0 & 0 & -1 & 0 & 0 & 1 & 0 & -1
\end{matrix}\right)\\
A_2&=\left(\begin{matrix}
1 & 0 & 0 & 0 & 1 & 0\\
1 & 1 & 0 & 0 & 0 & 0\\
0 & 1 & 1 & 0 & 0 & 0\\
0 & 0 & 1 & 1 & 0 & 0\\
0 & 0 & 0 & 1 & 1 & 0\\
1 & 0 & 0 & -1 & 0 & 0\\
0 & 1 & 0 & 0 & -1 & 0\\
-1 & 0 & 1 & 0 & 0 & 0\\
0 & -1 & 0 & 1 & 0 & 0\\
0 & 0 & -1 & 0 & 1 & 0\\
-1 & 0 & 0 & 0 & 0 & 1\\
0 & -1 & 0 & 0 & 0 & 1\\
0 & 0 & -1 & 0 & 0 & 1\\
0 & 0 & 0 & -1 & 0 & 1\\
0 & 0 & 0 & 0 & -1 & 1
\end{matrix}\right)
\end{align*}}

Hence for the matrix representation of $\slashed{\partial}$ we get

\begin{displaymath}
A:=\left(\begin{matrix}
0 & A_0 & 0 & 0\\
A_0^T & 0 & A_1 & 0\\
0 & A_1^T & 0 & A_2\\
0 & 0 & A_2^T & 0
\end{matrix}\right)
\end{displaymath}

where the zeros are zero matrices. It has the following eigenvalues

\begin{align*}
\begin{array}{l|l}
\text{Eigenvalues} & \text{multiplicity}\\
\hline
-\sqrt{10} & 1\\
-\sqrt{5+\sqrt{5}} & 3\\
-\sqrt{5} & 4\\
-\sqrt{5-\sqrt{5}} & 3\\
-\sqrt{2} & 5\\
\sqrt{2} & 5\\
\sqrt{5-\sqrt{5}} & 3\\
\sqrt{5} & 4\\
\sqrt{5+\sqrt{5}} & 3\\
\sqrt{10} & 1
\end{array}
\end{align*}

There are no non-zero monogenic functions.\\
The action of $\slashed{\partial}^2$ has eigenvalues $2,5-\sqrt{5},5,5+\sqrt{5},10$ with multiplicities 10,6,8,6,2 respectively. Hence there are no non-zero harmonic functions and the sound is equal to 140.

\end{example}


\chapter{Cartesian Products of Scripts}

There are two interesting ways to define the cartesian products of
scripts depending on whether or not one uses the accumulator $"1"$

\section{Cubic cartesian product}

To define the "cubic" cartesian product we start from two scripts:

$$
\mathcal{S}:\qquad 0\leftarrow \mathbb{Z}\leftarrow M_0(\mathcal{C}_0)\leftarrow^{\partial} \dots \leftarrow M_k(\mathcal{C}_k)
$$

$$
\mathcal{S'}:\qquad 0\leftarrow \mathbb{Z}\leftarrow M_0(\mathcal{C}_0')\leftarrow^{\partial} \dots \leftarrow M_k(\mathcal{C}_k'),
$$

and consider the truncated complexes:

$$
\mathcal{S}^{\bullet}:\qquad 0 \leftarrow  M_0(\mathcal{C}_0)\leftarrow^{\partial} \dots
$$

$$
\mathcal{S'}^{\bullet}:\qquad 0\leftarrow  M_0(\mathcal{C}_0')\leftarrow^{\partial} \dots
$$

Then the cubic cartesian product $\mathcal{S}^{\cdot}\times \mathcal{S'}^{\cdot}$ is defined as the complex

$$
\mathcal{S}^{\bullet}\times \mathcal{S'}^{\bullet}:\qquad 0\leftarrow M_0(\mathcal{C}_0'')\leftarrow^{\partial} \dots \leftarrow M_k(\mathcal{C}_k'')
$$
where

$$
\mathcal{C}_k''=\bigcup_{s=0}^k \mathcal{C}_s\times \mathcal{C}_{k-s}'
$$

and
$$
\mathcal{C}_s\times \mathcal{C}_{k-s}'=\{(C_j^s,C_l^{'k-s})\, C_j^s\in \mathcal{C}_s\, C_l^{'k-s}\in\mathcal{C}_{k-s}'\}\, .
$$

Hereby, we define

$$
\partial (C_j^s,C_l^{'k-s})=(\partial C_j^s,C_l^{'k-s})+(-1)^s(C_j^s,\partial C_l^{'k-s})
$$
and also assume linearity
$$
(\lambda C_i^s+\mu C_j^s, C^l)=\lambda (C_i^s, C^l)+\mu(C_j^s, C^l).
$$

Notice that
$$
\partial (\partial C_j^s,C_l^{'k-s})=(-1)^{s-1}(\partial C_j^s,\partial C_l^{'k-s})
$$
and
$$
\partial (C_j^s,\partial C_l^{'k-s})=(\partial C_j^s,\partial C_l^{'k-s})
$$

from which one can readily obtain that $\partial^2=0$ and, since $M_k(\mathcal{C}_k'')$ is the free $\mathbb{Z}-$module over $\mathcal{C}_k''$, we obtain a new complex.

In a similar way, one can also introduce longer cartesian products such as

$$
\mathcal{S}_1^{\bullet}\times \mathcal{S}_2^{\bullet}\times \cdots \times \mathcal{S}_l^{\bullet},
$$

where
$$
\mathcal{S}_j^{\bullet}:\qquad 0 \leftarrow  M_0(\mathcal{C}_{0,j})\leftarrow \dots \leftarrow M_k(\mathcal{C}_{k,j})\, .
$$

The product complex is given by:

$$
\prod_{j=1}^{l}\mathcal{S}_j^{\bullet}:= 0 \leftarrow  M_0(\mathcal{C}''_{0})\leftarrow \dots \leftarrow M_k(\mathcal{C}''_{k})\, ,
$$
whereby
$$
\mathcal{C}_k''=\bigcup_{k_1+\cdots+k_l=k} \mathcal{C}_{k_1,1}\times \cdots \times \mathcal{C}_{k_l,l}\, ,
$$
and
$$
\mathcal{C}_{k_1,1}\times \cdots \times \mathcal{C}_{k_l,l}\,
$$

is the set of $l-$tuples of the form
$$
(C_{j1}^{k_1, 1},\cdots, C_{jl}^{k_l, l})\, ,
$$
with $C_{js}^{k_s, s}\in \mathcal{C}_{k_s,s}$. The boundary operator is given by
$$
\partial (C_{j1}^{k_1, 1},\cdots, C_{jl}^{k_l, l})=(\partial C_{j1}^{k_1, 1},\cdots, C_{jl}^{k_l, l})+(-1)^{k_1}(C_{j1}^{k_1, 1},\partial C_{j2}^{k_2, 2}\cdots, C_{jl}^{k_l, l})+\cdots
+(-1)^{k_1+\cdots+k_{l-1}}(C_{j1}^{k_1, 1},\cdots,\partial C_{jl}^{k_l, l}).
$$

\section{Examples}
We will elaborate on the following examples:
\begin{itemize}
\item[i).] The $l-$dimensional cube $\mathcal{Q}_l^{\bullet}$
\item[ii).] The multicube $\mathcal{Q}_{l_1,\dots, l_s}^{\bullet}$
\item[iii).] The semigrid $\mathbb{N}^m$.
\item[iv).] The grid $\mathbb{Z}^m$
\item[v).] The torus $\mathbb{Z}_{k_1,\dots,k_s}$
\end{itemize}

{\bf i)} The one dimensional cube is the script:

$$
\mathcal{Q}_1:\qquad 0 \leftarrow \mathbb{Z} \leftarrow  M_0(\{p_0,p_1\})\leftarrow M_1(\{ l\})\, ,
$$

with $\partial p_0=\partial p_1=1$ and $\partial l=p_1-p_0$.

The $l-$ dimensional cube $\mathcal{Q}_l^{\bullet}$ is then defined to be:

$$
\mathcal{Q}_l^{\bullet}=\mathcal{Q}_1^{\bullet}\times\mathcal{Q}_1^{\bullet}\times \cdots \times\mathcal{Q}_1^{\bullet}\, ,
$$
for example $\mathcal{Q}_3^{\bullet}=\mathcal{Q}_1^{\bullet}\times\mathcal{Q}_1^{\bullet}\times\mathcal{Q}_1^{\bullet}\,$ consists of the following elements:

\begin{itemize}
\item 8 points:
$$
\mathcal{C}_0''=\{(p_{j_1}, p_{j_2}, p_{j_2})|\, j_1,\,j_2,\,j_3\in\{0,1\}\}\, ,
$$
\item 12 lines:
$$
\mathcal{C}_1''=\{(l,p_{j_1}, p_{j_2})|\, j_1,\,j_2\in\{0,1\}\}\bigcup \{(p_{j_1}, l, p_{j_2})|\, j_1,\,j_2\in\{0,1\}\}\bigcup \{(p_{j_1}, p_{j_2}, l)|\, j_1,\,j_2\in\{0,1\}\}\, ,
$$
\item 6 planes:
$$
\mathcal{C}_2''=\{(l,l,p_{j})|\, j\in\{0,1\}\}\bigcup \{(l,p_{j}, l)|\, j\in\{0,1\}\}\bigcup \{(l,l,p_{j})|\, j\in\{0,1\}\} \, ,
$$
\item 1 volume element:
$$
\mathcal{C}_3''=\{(l, l, l)\}\, .
$$
\end{itemize}

The boundary maps $\partial$ for the cube are evident. For example, we have:
$$
\partial (l, l, l)=(p_1-p_0,l,l)-(l,p_1-p_0,l)+(l,l,p_1-p_0)\, ,
$$

$$
\partial (l,l,p_j)=(p_1-p_0,l,p_j)-(l,p_1-p_0,p_j)\, ,
$$

$$
\partial (l,p_{j_1},p_{j_2})=(p_1-p_0,p_{j_1},p_{j_2})\, ,
$$
and $\partial (p_{j_1},p_{j_2},p_{j_3})=0$ inside $\mathcal{Q}_3^{\bullet}$  while $\partial (p_{j_1},p_{j_2},p_{j_3})=1$ inside the extended $\mathcal{Q}_3$.

{\bf ii)}  For the multicube $\mathcal{Q}_{l_1,\dots, l_s}^{\cdot}$, we start from the script $\mathbb{Z}_k^{\circ}$ with points: $\{ p_0,p_1,\cdots, p_k \}$
and lines $\{ l_1,l_2,\cdots, l_k \}$, and equations $\partial l_j=p_j-p_{j-1}$.

We then define the multicube as
$$
\mathcal{Q}_{l_1,\dots, l_s}^{\cdot}=\mathbb{Z}_{l_1}^{\circ}\times\cdots\times \mathbb{Z}_{l_s}^{\circ}.
$$

{\bf iii)} For the semigrid $\mathbb{N}^m$ we first denote $\mathbb{N}$ to be the script with points $\{ p_0,p_1,\cdots, p_k,\cdots \}$
and lines $\{ l_1,l_2,\cdots, l_k,\cdots \}$, and relations  $\partial l_j=p_j-p_{j-1}$.

Then we can define $\mathbb{N}^m=\mathbb{N}\times \mathbb{N}\times\cdots \mathbb{N}$, where the cartesian product is taken $m$ times.

{\bf iv)} For the grid $\mathbb{Z}^m$ we have as starting point the script $\mathbb{Z}$ with points $\{p_j|\, j\in\mathbb{Z}\}$, lines
$\{l_j|\, j\in\mathbb{Z}\}$ and relations $\partial l_j=p_j-p_{j-1}$.

We then define $\mathbb{Z}^m=\mathbb{Z}\times\mathbb{Z}\times\cdots\times \mathbb{Z}\, .$

One should also note that $\mathbb{N}^{m_1}\times \mathbb{Z}^{m_2}$ exists.

{\bf v)} For the torus  $\mathbb{Z}_{k_1,\dots,k_s}$ we first define $\mathbb{Z}_k$ to be the $k-$polygon with points $\{ p_0,p_1,\cdots, p_k \}$, lines $\{ l_1,l_2,\cdots, l_k, \}$, relations  $\partial l_j=p_j-p_{j-1}$, and glueing constraints $p_k=p_0$, i.e. $\partial l_k=p_0-p_{k-1}$.

The torus is then defined as
$$
\mathbb{Z}_{k_1,\dots,k_s}=\mathbb{Z}_{k_1}\times\mathbb{Z}_{k_2}\times\cdots \times\mathbb{Z}_{k_s}.
$$


\section{Tightness in Cubic Cartesian Products}

Consider a script:

$$
\mathcal{S}:\qquad 0\leftarrow \mathbb{Z}\leftarrow M_0(\mathcal{C}_0)\leftarrow^{\partial}  M_1(\mathcal{C}_1)\leftarrow \dots
$$

and its truncated version:

$$
\mathcal{S}^{\bullet}:\qquad 0\leftarrow^{\partial} M_0(\mathcal{C}_0)\leftarrow^{\partial} M_1(\mathcal{C}_1) \leftarrow^{\partial} \dots
$$

If script $\mathcal{S}$ is tight then every line $l\in\mathcal{C}_1$has two points $\partial l=q-p$. However, within the truncated complex $\mathcal{S}^{\bullet}$,
we have $\partial p=\partial q=0$ and line $l$ is no longer tight because $Z_0(\{p,q\})=\mathbb{Z}^2$.  We see then that the accumulator is quite essential in
script geometry, it implies that tight lines consist of two points.

Consider a second script:

$$
\mathcal{S}':\qquad 0\leftarrow \mathbb{Z}\leftarrow M_0(\mathcal{C}_0')\leftarrow^{\partial}  M_1(\mathcal{C}_1')\leftarrow \dots
$$

Even if $\mathcal{S}$ and $\mathcal{S}'$ are tight, $\mathcal{S}\times \mathcal{S}'$ won't be tight. We need to "attach" an accumulator.


\begin{lemma}
Consider the extension $S^{\prime\prime}$ of $S\times S^{\prime}$ given by
$$
S^{\prime\prime}: 0\leftarrow \mathbb{Z}\leftarrow M_0(\mathcal{C}_0^{\prime\prime})\leftarrow M_1(\mathcal{C}_1^{\prime\prime})\leftarrow \ldots
$$
with $\partial(p,q)=1$ for $(p,q)\in \mathcal{C}_0^{\prime\prime}$. Then $S^{\prime\prime}$ is still a complex.
\end{lemma}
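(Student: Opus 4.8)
The plan is to verify that $S''$ satisfies $\partial \circ \partial = 0$ at every level; since $S''$ is by construction a sequence of free $\mathbb{Z}$-modules with a prescribed boundary map, that is all that is needed. The key observation is that $S''$ coincides with the cubic cartesian product complex $S^{\bullet}\times S'^{\bullet}$ in every degree $\geq 0$, and differs from it only in that the zero module sitting below $M_0(\mathcal{C}_0'')$ has been replaced by the accumulator $\mathbb{Z}$ together with the map $\partial(p,q)=1$ on $0$-cells. Consequently every identity $\partial^2=0$ that involves only the modules $M_k(\mathcal{C}_k'')$ with $k\geq 0$ is inherited verbatim from the fact, already established above, that the cubic cartesian product is a complex. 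The single identity that genuinely must be checked is
$$
\partial\bigl(\partial c\bigr)=0 \quad\text{in } \mathbb{Z},\qquad\text{for every } c\in M_1(\mathcal{C}_1'').
$$

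Next I would reduce this to basis elements. By definition $\mathcal{C}_1''=\bigl(\mathcal{C}_0\times\mathcal{C}_1'\bigr)\cup\bigl(\mathcal{C}_1\times\mathcal{C}_0'\bigr)$, so a $1$-cell is either of the form $(\ell,p')$ with $\ell$ a line of $S$ and $p'$ a point of $S'$, or of the form $(p,\ell')$. In the truncated complexes $S^{\bullet}$ and $S'^{\bullet}$ the points have zero boundary, so the cross terms in the product boundary formula drop out and one simply gets $\partial(\ell,p')=(\partial\ell,p')$ and $\partial(p,\ell')=(p,\partial\ell')$. Writing $\partial\ell=\sum_i\lambda_i p_i$ in $S$ and $\partial\ell'=\sum_j\mu_j q_j'$ in $S'$, these are $\sum_i\lambda_i(p_i,p')$ and $\sum_j\mu_j(p,q_j')$ respectively, i.e.\ honest $\mathbb{Z}$-linear combinations of $0$-cells of $\mathcal{C}_0''$ with exactly the same coefficients.

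Then I would apply $\partial(p,q)=1$ and invoke the compatibility of the factor scripts with their own accumulators. Since in a minimal script (which, as noted in the minimization section, we may assume) $\partial p_i=1$ for every point of $S$, the relation $\partial^2\ell=0$ in $S$ reads $0=\sum_i\lambda_i\,\partial p_i=\sum_i\lambda_i$, and likewise $\sum_j\mu_j=0$ in $S'$. Hence
$$
\partial\bigl(\partial(\ell,p')\bigr)=\sum_i\lambda_i=0,\qquad \partial\bigl(\partial(p,\ell')\bigr)=\sum_j\mu_j=0,
$$
and by $\mathbb{Z}$-linearity $\partial^2=0$ on all of $M_1(\mathcal{C}_1'')$. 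Combined with the inherited higher identities, this gives $\partial\circ\partial=0$ throughout, so $S''$ is a complex.

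The only point I expect to require care is the bookkeeping: one must keep the truncated complexes (in which $\partial$ sends points to $0$) strictly separate from the original scripts (in which $\partial$ sends points to $1$), so as to see correctly that the product-boundary cross terms vanish and that the surviving coefficients are precisely those of $\partial\ell$ and $\partial\ell'$. Once that distinction is in place, the essential content is merely the numerical identity ``the coefficients of the boundary of a line sum to zero,'' which is exactly $\partial p=1$ together with $\partial^2=0$ in the factors, and which is exactly what makes attaching an accumulator consistent. No new geometric input is needed.
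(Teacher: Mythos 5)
Your proof is correct and takes essentially the same route as the paper's: both reduce the claim to checking $\partial^2 L=0$ for the two kinds of lines $L=(\ell,p')$ and $L=(p,\ell')$ in $\mathcal{C}_1''$ and then evaluate $\partial(p,q)=1$ on the resulting $0$-chain. The only (minor) difference is that the paper assumes the tight form $\partial\ell=p-q$ and computes $1-1=0$ directly, whereas you treat a general boundary $\partial\ell=\sum_i\lambda_i p_i$ and deduce $\sum_i\lambda_i=0$ from $\partial^2\ell=0$ together with $\partial p_i=1$ in the factor script, which is slightly more general but the same idea.
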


\begin{proof}
We have to prove that for every line $L\in \mathcal{C}_1^{\prime\prime}$, $\partial^2 L=0$. But lines $L$ come in two-forms $L=(l,p^\prime)$ or $L=(p, l^\prime)$and $\partial L$ is given by $(\partial l, p^\prime)=(p-q, p^\prime)$, resp. $(p,\partial l^\prime)=(p,q^\prime-p^\prime)$. Clearly, $\partial(p-q.p^\prime)=\partial((p,p^\prime)-(q,p^\prime))=1-1=0$ and similarly $\partial(p,q^\prime-p^\prime)=0$.
\end{proof}

We now come to the  main theorem.
\begin{theorem}
Let $S$ and $S^\prime$ be tight scripts then the extended cubic Cartesian product $S^{\prime\prime}$ is also tight.
\end{theorem}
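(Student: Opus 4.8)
The plan is to reduce everything to the tightness of the individual cells of $S''$ and to a Künneth-type computation. Since a script is tight iff each of its cells is tight, it suffices to show that an arbitrary cell $L=(C^s_j,C'^t_l)$ of $S''$, with $C^s_j\in\mathcal C_s$, $C'^t_l\in\mathcal C'_t$ and $k:=s+t=\dim L$, satisfies: $\mathcal Z_{k-1}(\rb L)$ is generated by the single cycle $\partial L$. Write $\bar\sigma$ for the subscript of $S$ generated by $C^s_j$ and $\bar\tau$ for the subscript of $S'$ generated by $C'^t_l$. Because tightness of a cell depends only on that cell's boundary, which is unchanged in a subscript, $\bar\sigma$ and $\bar\tau$ are again tight. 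The boundary formula $\partial(A,B)=(\partial A,B)+(-1)^{\deg A}(A,\partial B)$ shows that the supports produced by iterating $\partial$ on $L$ stay of the form $(D,E)$ with $D$ in $\bar\sigma$ and $E$ in $\bar\tau$; hence the subscript $\mathrm{cl}(L)$ of $S''$ generated by $L$ is exactly the extension of $\bar\sigma^{\bullet}\times\bar\tau^{\bullet}$ by an accumulator with $\partial(p,q)=1$. In particular the $(k-1)$-cells of $\mathrm{cl}(L)$ are precisely $\rb L$, so $\mathcal Z_{k-1}(\rb L)=\mathcal Z_{k-1}(\mathrm{cl}(L))$.

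First I would establish the auxiliary fact that, in any tight script, the augmented complex of a closed cell
\[0\longleftarrow\mathbb Z\longleftarrow\mathcal M_0(\bar\sigma)\longleftarrow\cdots\longleftarrow\mathcal M_s(\bar\sigma)\longleftarrow 0\]
is exact. This goes by induction on $s$: the augmentation $(p)\mapsto\partial p=1$ is onto; there are no $s$-cycles since a tight top cell is not floating; $\mathcal Z_{s-1}(\mathcal M_{s-1}(\bar\sigma))=\mathcal Z_{s-1}(\rb C^s_j)$ is generated by $\partial C^s_j=\mathcal B_{s-1}$ by tightness; and in the lower degrees one uses that the closure of $\rb C^s_j$ is a homology $(s-1)$-sphere, with reduced homology concentrated in degree $s-1$ and generated by $\partial C^s_j$ — which follows from the structure theorems for tight cells (Theorems~\ref{Lm:2.002}, \ref{Th:2.001}) together with the inductive hypothesis applied to the cells of $\rb C^s_j$. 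Deleting the accumulator, this says $\bar\sigma^{\bullet}$ has the homology of a point ($H_0=\mathbb Z$, $H_m=0$ for $m\ge1$), and likewise $\bar\tau^{\bullet}$.

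Now the main step. By construction the chain complex of $\bar\sigma^{\bullet}\times\bar\tau^{\bullet}$ is the tensor product of the chain complexes of $\bar\sigma^{\bullet}$ and $\bar\tau^{\bullet}$, the factor $(-1)^s$ in the boundary being exactly the Koszul sign. All modules are free over $\mathbb Z$, so the algebraic Künneth theorem carries no $\mathrm{Tor}$ term and gives $H_m(\bar\sigma^{\bullet}\times\bar\tau^{\bullet})=\bigoplus_{a+b=m}H_a(\bar\sigma^{\bullet})\otimes H_b(\bar\tau^{\bullet})$, which is $\mathbb Z$ for $m=0$ and $0$ otherwise. Re-attaching the accumulator via $\partial(p,q)=1$ (a surjection onto $\mathbb Z$, and $\partial^2=0$ on $\mathrm{cl}(L)$ by the preceding lemma) forces the augmented complex of $\mathrm{cl}(L)$ to be exact in every degree. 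Since the unique top cell of $\mathrm{cl}(L)$ is $L$, exactness in degree $k-1$ yields $\mathcal Z_{k-1}(\rb L)=\mathcal Z_{k-1}(\mathrm{cl}(L))=\mathcal B_{k-1}(\mathrm{cl}(L))=\partial(\mathbb Z\cdot L)=\mathbb Z\cdot\partial L$, which is generated by the single cycle $\partial L$ (automatically minimal). Thus $L$ is tight, and as $L$ was arbitrary, $S''$ is tight.

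The main obstacle is the auxiliary acyclicity statement for closed cells — that a tight closed cell is acyclic in all degrees, equivalently that the geometric boundary $\rb C^s_j$ with its induced subscript structure is a homology $(s-1)$-sphere. The top and next-to-top degrees are immediate from the definition of tightness, but the vanishing in all lower degrees is where every tightness hypothesis is genuinely consumed; it is cleanest to prove it as a simultaneous induction together with the "boundary is a homology sphere" claim, and this is the part that should be written out in full detail.
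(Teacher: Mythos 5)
Your reduction to the tightness of a single product cell $L=(C^s_j,C'^t_l)$, and the identification of $\mathcal{Z}_{k-1}(\rb L)$ with the $(k-1)$--cycles of the subscript generated by $L$, are fine. But the auxiliary lemma on which you hang everything --- that in a tight script the augmented complex of the subscript generated by a cell is exact in all degrees, equivalently that the closure of $\rb C^s_j$ is a homology $(s-1)$--sphere --- is false, and you have yourself flagged it as the load-bearing step. The $2$--torus script of Chapter 3 is a counterexample: it is tight and unitary, its single $3$--cell $C$ has $\partial C=v_1+v_2+v_3+v_4$, and the subscript generated by $C$ is the whole torus script, whose first homology has rank $2$ (the paper's own spectral computation exhibits the two independent harmonic $1$--chains $\ell_1+\ell_2+\ell_5+\ell_6$ and $\ell_3+\ell_4+\ell_7+\ell_8$). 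Tightness of a cell constrains only the cycles in the top degree of its geometric boundary; it says nothing about the lower-dimensional homology of the closure, and the structure theorems for lines and $2$--cells cannot supply that vanishing.

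The strategy is salvageable, because your Künneth computation at the one degree that matters never sees the low-dimensional homology: in $H_{k-1}(\bar\sigma^{\bullet}\otimes\bar\tau^{\bullet})$ with $k=s+t$, the constraints $a\le s$, $b\le t$ leave only the summands $H_s\otimes H'_{t-1}$ and $H_{s-1}\otimes H'_{t}$ together with Tor terms in total degree $k-2$, and these involve only the top two homology groups of each factor --- exactly what tightness and minimality of the single cells $C^s_j$ and $C'^t_l$ control (with the usual care about the accumulator when $s$ or $t$ equals $0$ or $1$). If you restate the auxiliary step as a claim about $H_s$ and $H_{s-1}$ only, the argument closes. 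The paper avoids the homological algebra entirely: it takes an arbitrary cycle supported in $\rb L$, splits it by bidegree as $(\mathcal{C}_{s-1},C'^t_l)+(C^s_j,\mathcal{C}'_{t-1})$, applies $\partial$ and separates the result by bidegree; the extreme components force $\mathcal{C}_{s-1}$ and $\mathcal{C}'_{t-1}$ to be cycles, hence equal to $\lambda\,\partial C^s_j$ and $\mu\,\partial C'^t_l$ by tightness of the factors, and the middle component forces $\mu=(-1)^s\lambda$, so the given cycle is $\lambda\,\partial L$. That route is shorter and makes visible that only the factor cells' own tightness is consumed.
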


\begin{proof}
Let $C_j^k\in \mathcal{C}_k, C^{\prime,l}_k\in \mathcal{C}^{\prime}_l$. Then $\partial(C_j^k,C_i^{\prime, l})=(\partial C_j^k,C_i^{\prime,l})+(-1)^k(C_j^k,\partial C_i^{\prime,l})$ and so $\rb(C_j^k,C_i^{\prime, l})$ consists of two parts $\rb(C_j^k,C_i^{\prime, l})=(\rb C_j^k,C_i^{\prime, l})\cup ((C_j^k,\rb C_i^{\prime, l})$.

Now, consider a cycle $\mathcal{C}_{k+l-1}$ with support in $\rb(C_j^k,C_i^{\prime,l})$. Then
$$
\mathcal{C}_{k+l-1}=  \{ (\mathcal{C}_{k-1}, C_i^{\prime,l})+(C^{k}_j,\mathcal{C}_{l-1}^{\prime}) \}
$$
for some chains $\mathcal{C}_{k-1}\in M_{k-1}(\rb (C_j^k))$ and $\mathcal{C}^{\prime}_{l-1}\in M_{l-1}(\rb(C_i^{\prime, l}))$.

Now,
\begin{eqnarray*}
0=\partial \mathcal{C}_{k+l} & = & (\partial \mathcal{C}_{k-1}, C_i^{\prime,l})+(-1)^{k-1}(\mathcal{C}_{k-1},\partial C_i^{\prime, l})\\
&&+(\partial C_j^k, \mathcal{C}^{\prime}_{l-1})+(-1)^k(C_j^k,\partial \mathcal{C}^{\prime}_{l-1})
\end{eqnarray*}
implies that the parts of different dimensions in this sum are also zero.  In particular,
$$
(\partial \mathcal{C}_{k-1}, C_i^{\prime, l})=(C_j^k, \partial \mathcal{C}^{\prime}_{l-1})=0
$$
and so $\partial \mathcal{C}_{k-1}=0\; \& \; \partial \mathcal{C}^{\prime}_{l-1}=0$.

As $S$ and $S^\prime$ are tight, there are constants $\lambda,\mu$, such that $ \mathcal{C}_{k-1}=\lambda \partial C_j^k$, $ \mathcal{C}^{\prime}_{l-1}=\mu \partial C_i^l$. We also have that
\begin{eqnarray*}
0 & = & (-1)^{k-1}( \mathcal{C}_{k-1},\partial C_i^{\prime, l})+(\partial C_j^k, \mathcal{C}^{\prime}_{l-1})\\
& = & (-1)^{k-1}\lambda (\partial C_j^{k}, \partial C_i^{\prime, l})+\mu (\partial C_j^k,\partial C_i^{\prime, l})
\end{eqnarray*}
so that, in fact $\mu=(-1)^k\lambda$.

But that implies that
\begin{eqnarray*}
 \mathcal{C}_{k+l} & = & \lambda(\partial C_j^k,C_i^{\prime,l})+(-1)^k\lambda (C_j^k,\partial C_i^{\prime, l})\\
& = & \lambda \partial (C_j^k, C_i^{\prime, l})
\end{eqnarray*}
which means that $(C_j^k,C_i^{\prime, l})$ is tight.
\end{proof}

\section{The 3D Lie sphere $LS^3$}

Analysis:

The $3-$dimensional Lie sphere is the non-oriented $3-$dimensional manifold consisting of points in $\mathbb{C}^3$ of the form $e^{i\theta}\underline{\omega}, \theta\in [0,\pi[,\underline{\omega}\in S^2$.

In polar coordinates we put
$$
\underline{\omega}=\cos\psi(\cos\varphi e_1+\sin\varphi e_2)+\sin\psi e_3.
$$

We now create a script for $S^2$ putting
\begin{eqnarray*}
p_1 & \leftrightarrow & e_1, p_2\leftrightarrow -e_1\\
l_1 & \leftrightarrow & \cos\varphi e_1+\sin\varphi e_2, \varphi\in ]0,\pi[\\
l_2 & \leftrightarrow & \cos\varphi e_1-\sin\varphi e_2, \varphi\in ]0,\pi[\\
v_1 & \leftrightarrow & \cos\psi(\cos\varphi e_1+\sin\varphi e_2)+\sin\psi e_3, \varphi\in [0,2\pi [, \psi\in ]0, \frac{\pi}{2}]\\
v_1 & \leftrightarrow & \cos\psi(\cos\varphi e_1+\sin\varphi e_2)-\sin\psi e_3.
\end{eqnarray*}
Clearly, it makes sense to consider the script with $C_0=\{p_1,p_2\}, C_1=\{l_1,l_2\}, C_2=\{v_1,v_2\}$ and relations
\begin{eqnarray*}
& \partial l_1=p_2-p_1, &\partial l_2=p_2-p_1 \\
&\partial v_1=l_2-l_1, & \partial v_2=l_2-l_1
\end{eqnarray*}
and orientation $v_2-v_1$.

For the set of points $e^{i\theta}, \theta\in [0, \pi-\epsilon]$, we name ``1''$\leftrightarrow e^{i0}=1$, ``2''$\leftrightarrow e^{i(\pi-\epsilon)}$, $I\leftrightarrow \mbox{ set }e^{i\theta}, \theta\in ]0,\pi-\epsilon[$ and together  they form the script $J$ with
$$
C_0=\{1,2\}, C_1=\{I\}, \partial I =``2''-``1''.
$$
Next we form the cartesian product of scripts $S^2\times J$ and a script for $LS^3$ will emerge if we take the limit $\epsilon\rightarrow 0$ and make the necessary identifications on gluings for the end-cells.

In extenso for $S^2\in J$ we have 4 points ($(p_1,1),(p_1,2),(p_2,1),(p_2,2)$), 6 lines ($(l_1,1),(l_1,2),(l_2,1),(l_2,2),(p_1,I),(p_2,I)$), and 6 planes ($(v_1,1),(v_2,1),(v_1,2),(v_2,2),(l_1,I),(l_2,I)$), and 2 3D-cells ($(v_1,I),(v_2,I)$).

The script relations are given by
\begin{eqnarray*}
\partial (l_j,k) & = & (p_2,k)-(p_1,k), k=1,2,\\
\partial (v_j,k) & = & (l_2,k)-(l_1,k),\\
\partial (p_j, I) & = & (p_j,2)-(p_j,1),\\
\partial (l_j,I) & = & (p_2,I)-(p_1,I)-(l_j,2)+(l_j,1),\\
\partial (v_j, I) & = & (l_2,I)-(l_1,I)+(v_j,2)-(v_j,1).
\end{eqnarray*}

We taking limit $\epsilon\rightarrow 0$ it seems to we have to glue:

stage 1: $(p_1,2)=(p_2,1), (p_2,2)=(p_1,1)$

stage 2: $(l_1,2)= - (l_2,1), (l_2,2)=-(l_1,1)$

indeed $\partial (l_1,2)=(p_2,2)-(p_1,2)=(p_1,2)-(p_2,1)=-\partial (l_2,1)$ confirms this

stage 3: $(v_1,2)=(v_2,1)$, $(v_2,2)=(v_1,1)$

To simplify the notation we put
$$
(p_j,1)=p_j, (l_j,1)=l_j, (v_j,1)=v_j
$$
and we arrive at the following script for the Lie sphere with 2 points ($p_1,p_2$), 4 lines ($l_1,l_2,(p_1,I),(p_2,I)$), 4 planes ($v_1,v_2,(l_1,I),(l_2,I)$), and 2 volumes ($(v_1,I),(v_2,I)$).

The script equations are those we have for $S^2:\partial l_j=p_2-p_1,\partial v_j=l_2-l_1$ together with new equations
\begin{eqnarray*}
\partial(p_1,I) & = & (p_1,2)-(p_1,1)=p_2-p_1\\
\partial(p_2,I) & = & p_1-p_2,\\
\partial(l_1,I) & = & (p_2,I)-(p_1,I)-(l_1,2)+(l_1,1)\\
\partial(l_2,I) & = & \mathrm{ibid}\\
\partial(v_1,I) & = & (l_2,I)-(l_1,I)+v_2-v_1\\
\partial(v_2,I) & = & (l_2,I)-(l_1,I)+v_1-v_2\\
\partial(v_2,I)-\partial(v_1,I)=2(v_1-v_2)
\end{eqnarray*}

Synthesis: By this we mean the reconstruction of the geometry from the script.

In this case it won't be possible because the script is not tight. We have
$$
\partial(l_1,I)=(p_2,I)-(p_1,I)+l_2+l_1
$$
and $\partial(p_2,I)+\partial l_2=p_1-p_2+p_2-p_1=0$ so $(p_2,I)+l_2$ and $-(p_1,I)+l_1$ are linearly independent cycles inside $\rb (l_1,I)$ and $H_1(\rb(l_1,I))=\mathbb{Z}$. So the cells $(l_1,I), (l_2,I)$ are not tight and the canonical 2D-script is no CW complex. The geometrical offprint of the script does not determine the script and hence also not the geometry of the Lie sphere. For examples when comparing the Lie sphere $(l_1,I)$ has to be a rectangle of which the corners are glued together in opposite way (like the M\"obius band) but the sides are not glued together. It can be seen that $(l_1,I)$ and $(L_2,I)$ together form a 2D-torus which corresponds to the embedding of Lie sphere $LS^2$ inside $LS^3$. But the script itself does not lead to this interpretation; there is not enough information.

In fact we could introduce new tight $2-$cells $V_1,V_2,W_1,W_2$ with
\begin{eqnarray*}
& \partial V_1=(p_2,I)+l_2, & \partial V_2=(p_1,I)-l_1\\
& \partial W_1=(p_2,I)+l_1, & \partial W_2=(p_1,I)-l_2
\end{eqnarray*}
and then put $(l_1,I)=V_1-V_2$, $(l_2,I)=W_1-W_2$.

But that contradicts the Lie sphere because there $(p_2,I)+l_2$ is homologically non-trivial. In fact $(p_2,I)+l_2$ is homologous to $(p_1,I)-l_2$ and the double cycle $(p_2,I)+l_2+(p_1,I)-l_1$ is homologous to the circle $LS^1\leftrightarrow (p_1,I)+(p_2,I)$ ($l_2-l_1=\partial v_1$): the set of points $e^{i\theta},\theta\in[0,2\pi[$.

On the level of planes we see that $(l_1,I)-(l_2,I)$ is a cycle that is in fact $LS^2$ and its homologous to $v_2-v_1$, i.e. $S^2$ and also to $v_1-v_2$ for indeed $2(v_1-v_2)=\partial(v_2-v_1,I)$ is a boundary. But the sphere $v_2-v_1$ itself is not a boundary and hence $LS^2\leftrightarrow (l_1-l_2,I)$ is also not a boundary. It seems that the homological information included in the script is in full agreement with the Lie sphere $LS^3$ in spite of the fact that the script is not tight.

To arrive at a tight script for $LS^3$ we can replace interval $J$ by the ``double interval'' $J_2$ with $C_0=\{"1","2","3"\}, C_1=\{I_1,I_2\}$, $\partial I_1="2"-"1"$, and $\partial I_2="3"-"2"$..

First we consider the cartesian product $S^2\times J_2$, followed by the list of identifications (and new notations):
\begin{eqnarray*}
(p_j,1)=p_j, & (l_j,1)=l_j, & (v_j,1)=v_j\\
(p_j,2)=ip_j, & (l_j,2)=il_j, & (v_j,2)=iv_j\\
(p_1,3)=p_2, & (p_2,3)=p_1 & \\
(l_1,3)=-l_2, & (l_2,3)=-l_1 & \\
(v_1,3)=v_2, & (v_2,3)=v_1. &
\end{eqnarray*}

Apart from the spheres $S^2=\{p_j,l_j,v_j\}$, $iS^2=\{ip_j,il_j,iv_j\}$ we have:

extra lines $(p_j,I_1),(p_j,I_2)$

extra spheres $(l_j,I_1), (l_j, I_2)$

extra volumes $(v_j,I_1), (v_j,I_2)$

Moreover, apart from the relations for the scripts $S^2$ and $iS^2$ we have
\begin{eqnarray*}
& \partial(p_j,I_1)=ip_j-p_j, & \partial(p_j,I_2)=p_{3-j}-ip_j\\
& \partial(l_j,I_1)=(p_2-p_1, I_1)-il_j+l_j, & \\
& \partial(l_j,I_2)=(p_2-p_1,I_2)+l_{3-j}+il_j & \\
& \partial(v_j,I_1)=(l_2-l_1,I_1)+iv_j-v_j, & \\
& \partial(v_j,I_2)=(l_2-l_1,I_2)+v_{3-j}-iv_j. & \\
\end{eqnarray*}

Now the geometry of $LS^3$ is fully determined by the script which is also tight. As an exercise one can study homology of Lie sphere, but that gives no new information compared to the previous non-tight scripts. This examples shows that it is not a good idea to systematically demand tightness; non-tight scripts may be much simpler and still relevant. So the idea of script geometry goes beyond CW-complexes and it is more than just a generalization.

\section{A discrete curvature model}
  \begin{figure}[H]
    \label{fig:curvature}
    \begin{center}
      \includegraphics[scale=0.4]{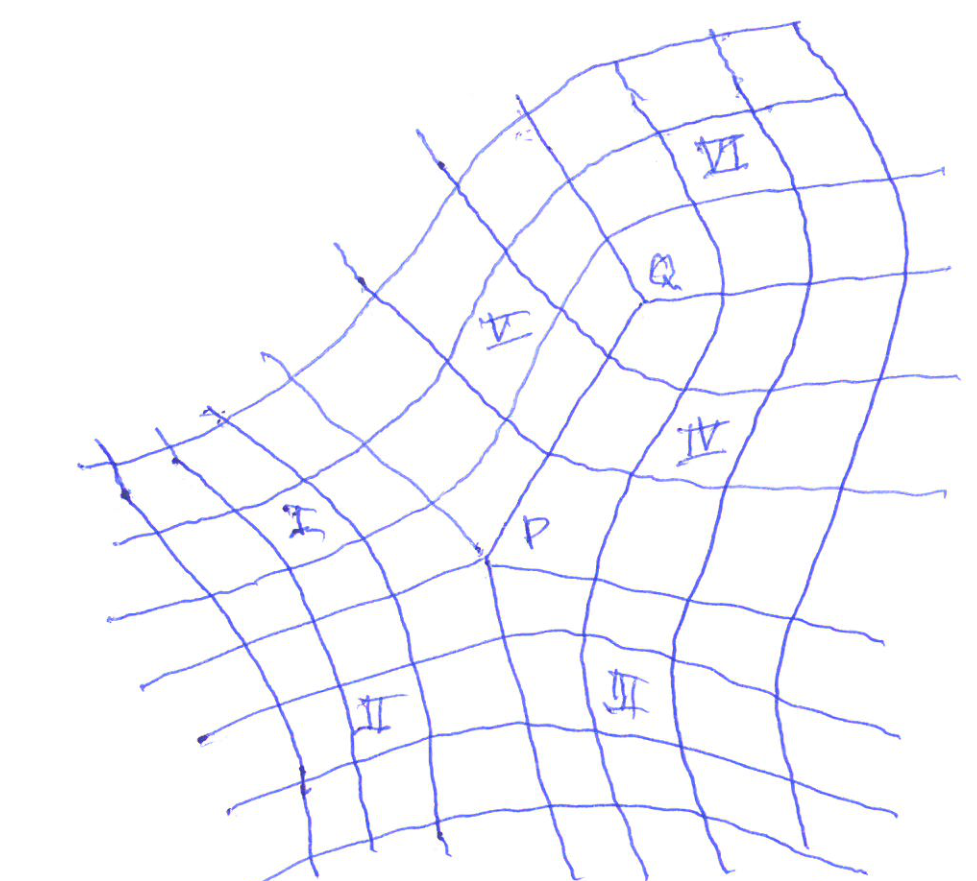}
      \caption{Discrete curvature in 2D}
    \end{center}
  \end{figure}

What is represented in this figure is the geometric offprint of a 2D-script which is unitary and tight. Hence the above figure determines the script up to equivalence. It appears to be a curved or bent 2D-surface with
\begin{itemize}
\item a curvature point $P$ with negative curvature $-1$
\item a curvature point $Q$ with positive curvature $+1$
\end{itemize}
All the other points, lines, and spheres are like in the standard grid $\mathbb{Z}^2$ that is flat.

In Einstein's theory, curvature is linked to gravity and in script geometry, curvature is a property of the geometric offprint alone, not of the actual script itself, that seems closer related to electromagnetism.

But of course we want to have at least one script for which this model is the geometric offprint we will realize it as 2D-surface inside $\mathbb{Z}^3$ where $\mathbb{Z}$ consists of points $a\in\{\ldots,-2,-1,0,1,2,\ldots\}$ and lines $I_j, j\in \mathbb{Z}$ with $\partial I_j="j+1"-"j"$. Hence, in $\mathbb{Z}^3$ itself we have points $(a_1,a_2,a_3)\in\mathbb{Z}^3$, lines $(I_j,a_2,a_3), (a_1,I_j,a_3), (a_1,a_2,I_j)$ with e.g. $\partial(I_j,a_2,a_3)=(j+1,a_2,a_3)-(j,a_2,a_3)$ and planes $(I_j,I_k,a_3)$, $(I_j,a_2,I_k)$, $(a_1,I_j,I_k)$ with e.g. $\partial(I_j,I_k,a_3)=(j+1,I_k,a_3)-(j,I_k,a_3)-(I_j,k+1,a_3)+(I_j,k,a_3)$ and volumes $(I_j,I_k,I_l)$ but we won't be needing theses.

Hence, the script equations follow from the cartesian product $\mathbb{Z}^3$ and all we have to do is to determine how the curvature model fits into $\mathbb{Z}^3$.

We have decomposed it into 6 overlapping zones consisting of points
\begin{itemize}
\item Zone 1: $(a_1,a_2,0) \; \& \; a_1\leq 0, a_2\geq 0$
\item Zone 2: $(a_1,a_2,0)\; \& \; a_1\leq 0, a_2\leq 0$
\item Zone 3: $(a_1,a_2,0) \; \& \; a_1\geq 0, a_2\leq 0$
\item Zone 4: $(a_1,0,a_3) \; \& \; a_1\geq 0, a_3=0,1,2,3$
\item Zone 5: $(0,a_2,a_3) \; \& \; a_2\geq 0, a_3=0,1,2,3$
\item Zone 6: $(a_1,a_2,3) \; \& \; a_1\geq 0, a_2\ge 0$
\end{itemize}
Also the lines can be computed zone by zone, of course these will be overlapping.

This leads to the list:
\begin{itemize}
\item Lines in Zone 1:
\begin{eqnarray*}
(I_j,a_2,0) & \& & j<0,a_2\geq 0\\
(a_1,I_j,0) & \& & j\geq 0, a_1\leq 0
\end{eqnarray*}
\item Lines in Zone 2:
\begin{eqnarray*}
(I_j,a_2,0) & \& & j<0,a_2\leq 0\\
(a_1,I_j,0) & \& & j< 0, a_1\leq 0
\end{eqnarray*}
\item Lines in Zone 3:
\begin{eqnarray*}
(I_j,a_2,0) & \& & j\geq 0,a_2\leq 0\\
(a_1,I_j,0) & \& & j< 0, a_1\geq 0
\end{eqnarray*}
\item Lines in Zone 4:
\begin{eqnarray*}
(I_j,0,a_3) & \& & j\geq 0,a_3=0,1,2,3\\
(a_1,0,I_j) & \& & a_1\geq 0, j=0,1,2
\end{eqnarray*}
\item Lines in Zone 5:
\begin{eqnarray*}
(0,I_j,a_3) & \& & j\geq 0,a_3=0,1,2,3\\
(0,a_2,I_j) & \& & a_2\geq 0, j=0,1,2
\end{eqnarray*}
\item Lines in Zone 6:
\begin{eqnarray*}
(I_j,a_2,3) & \& & j\geq 0,a_2\geq 0\\
(a_1,I_j,3) & \& & j\geq 0, a_1\geq 0
\end{eqnarray*}
\end{itemize}
Similarly, we have the plane elements (no overlapping)
\begin{itemize}
\item Planes in Zone 1:
$$
(I_j,I_k,0)\; ; \;  j<0 \; \& \; k\geq 0
$$
\item Planes in Zone 2:
$$
(I_j,I_k,0)\; ; \; j<0 \; \& \;  k< 0
$$
\item Planes in Zone 3:
$$
(I_j,I_k,0) \; ; \;   j\geq 0\; \& \;  k<0
$$
\item Planes in Zone 4:
$$
(I_j,0,I_k)\; ; \;   j\geq 0 \; \& \;  k= 0,1,2
$$
\item Planes in Zone 5:
$$
(0,I_j,I_k)\; ; \;  j\geq 0 \; \& \;  k= 01,2,3
$$
\item Planes in Zone 6:
$$
(I_j,I_k,3)\; ; \;  j\geq 0 \; \& \;  k\geq 0
$$
\end{itemize}
This fixes the whole script because the script relations follow from the cartesian product. In script geometry much of the creativity lies in finding the best algorithms to describe something. Scripts also involve gravity and electromagnetism combined whereby everything is expressed in terms of chains and their supports.

\section{The ``simplicial'' cartesian product}

We again start from two scripts
\begin{eqnarray*}
S & : & 0\leftarrow \mathbb{Z}\leftarrow M_0(\mathcal{C}_0)\overset{\partial}{\longleftarrow} \cdots\\
S^\prime & : & 0\leftarrow \mathbb{Z}\leftarrow M_0(\mathcal{C}_0^\prime)\overset{\partial}{\longleftarrow}\cdots\\
\end{eqnarray*}
Then the ``simplicial'' cartesian product is defined as the complex
$$
S\times S^\prime : 0\leftarrow\mathbb{Z}=M_{-2}({1})\overset{\partial}{\longleftarrow}M_{-1}(\mathcal{C}_{-1}^{\prime\prime})\overset{\partial}{\longleftarrow}M_{0}(\mathcal{C}_{0}^{\prime\prime})\overset{\partial}{\longleftarrow}\cdots
$$
whereby this time for $k\geq -2$
$$
\mathcal{C}_k^{\prime\prime}=\bigcup_{s=-1}^{k+1} \mathcal{C}_s\times\mathcal{C}_{k-s}^\prime
$$
and as before
$$
\mathcal{C}_s\times \mathcal{C}_{k-s}^\prime=\{(C_j^s,C_l^{\prime, k-s}:\ldots)\}
$$
so for example
\begin{eqnarray*}
\mathcal{C}_{-2}^{\prime\prime} & = & \{(1,1)\}=\{1\}\\
\mathcal{C}_{-1}^{\prime\prime} & = & \{(p_j,1),(1,p_j^\prime):p_j\in \mathcal{C}_0,p_j^\prime\in \mathcal{C}_0^\prime\}\\
\mathcal{C}_{0}^{\prime\prime} & = & \{(p_j,p_l^\prime):p_j\in \mathcal{C}_0,p_j^\prime\in \mathcal{C}_0^\prime\}\cup \{(1,l_j^\prime):l_j^\prime\in \mathcal{C}_1^\prime\}\cup \{(l_j,1):l_j\in C_j\}
\end{eqnarray*}
Just like before the $\partial$-operator is defined by
$$
\partial (C_j^s,C_l^{\prime, k-s})=(\partial C_j^s,C_l^{\prime, k-s})+(-1)^s(C_j^s,\partial C_l^{\prime, k-s})
$$
whereby this time $s\geq -1, k\geq -2$.

So for example
\begin{eqnarray*}
\partial (p_j,1) & = & (1,1)=1\\
\partial (1,p_j^\prime) & = & -(1,\partial p_j^\prime)=-(1,1)=-1\\
\partial (p_j,p_l^\prime) & = & (1,p_l^\prime)+(p_j,1)\\
\partial (1,l_j^\prime) & = & -1(1,\partial l_j^\prime)=-(1,p_\alpha^\prime-p_\beta^\prime)
\end{eqnarray*}
and so on.

So the main difference with the cubic case is that we make explicit use of the accumulations of $S$ and $S^\prime$ within the $\partial$-operator. This means that we have extra cells
$$
(C_j^k,1), (1,C_j^{\prime, k}).
$$
Also the elements $(p_j,p_l^\prime)$ behave like lines rather than points while the sets
$$
\mathcal{C}_0\times\{1\}=\{(p_j,s):\cdots\}, \{ 1 \}\times \mathcal{C}_0^\prime=\{(1,p_j^\prime):\cdots\}
$$
are like two sets of points for which
$$
\partial (p_j,1)=1, \partial (1,p_j^\prime)=-1.
$$
This may seem questionable because normally the boundary of a point is +1. But one can always introduce $-(1,p_j^\prime)$ as ``new points''. Also the dimensions of cells seem to have a shift $-1$, it is
$$
\mathrm{dim}1=-2, \qquad \mathrm{dim}(p_j,1)=-1,\qquad \mathrm{dim}(p_j,p_l^\prime)=0.
$$
while one would rather expect
$$
\mathrm{dim}1=-1, \qquad \mathrm{dim}(p_j,1)=0,\qquad \mathrm{dim}(p_j,p_l^\prime)=1.
$$
One can of course redefine the dimensions of the cells in this way. But that gives problems when defining longer symplicial cartesian products like $S_1\times S_2\times\ldots\times S_l$,
\begin{eqnarray*}
S_j & : & 0\leftarrow \mathbb{Z}\leftarrow M_0(\mathcal{C}_{0,j})\overset{\partial}{\longleftarrow}\ldots\\
\times_{j=1}^l S_j & : & 0\leftarrow \mathbb{Z}=M_{-l}(\mathcal{C}_{-l}^{\prime\prime})\overset{\partial}{\longleftarrow} M_{-l+1}(\mathcal{C}_{-l+1}^{\prime\prime})\leftarrow\ldots \overset{\partial}{\longleftarrow} M_{0}(\mathcal{C}_{0}^{\prime\prime})\leftarrow\ldots
\end{eqnarray*}
whereby for $k\geq -l$
$$
\mathcal{C}_k^{\prime\prime}=\bigcup_{k_1+\ldots+k_l=k}\mathcal{C}_{k_1,1}\times \ldots \times \mathcal{C}_{k_l,l}
$$
and the boundary operators is still given by
\begin{eqnarray*}
&& \partial (C_{j_1}^{k_1,1},\ldots,C_{j_l}^{k_l,l})\\
& = & (\partial C_{j_1}^{k_1,1},\ldots,C_{j_l}^{k_l,l})+\ldots +(-1)^{k_1+\ldots+k_l}(C_{j_1}^{k_1,1},\ldots,\partial C_{j_l}^{k_l,l}),\quad k_1,\ldots,k_l=-1,0,\ldots.
\end{eqnarray*}
Again,  one can consider the elements
$$
(p_j^1,1,\ldots,1),(1,p_j^2,\ldots,1), \ldots, (1, \ldots,p_j^l)
$$
as a partition of the whole set of points and one could redefine the dimension of objects
$$
\mathrm{dim} (\mbox{ object })\rightarrow \mathrm{dim} (\mbox{ object })+l-1
$$
to be in agreement with the general theory of scripts and to renormalize the points, lines, etc. so that $\partial$ point$=+1$. But these are rather cosmetic changes one does not need to make.

\begin{example}
Let
$$
S:0\leftarrow \mathbb{Z}\leftarrow M_0(\{p\})
$$
the script of a single point. Then $S\times S\times S\times S$ has cells
\begin{eqnarray*}
k=-4 & : & (1,1,1,1)=1\\
k=-3 & : & (p,1,1,1),(1,p,1,1),(1,1,p,1),(1,1,1,p)\\
k=-2 & : & (p,p,1,1), (p,1,p,1),(p,1,1,p),(1,p,p,1),(1,p,1,p),(1,1,p,p)\\
k=-1 & : & (p,p,p,1),(p,p,1,p),(p,1,p,p),(1,p,p,p)\\
k=0 & : & (p,p,p,p)
\end{eqnarray*}
\end{example}

The boundary map in this script follows from the general theory. For example,
\begin{eqnarray*}
\partial (1,p,1,1) & = &-1\\
\partial (p,1,p,1) & = & (1,1,p,1)-(p,1,1,1)\\
\partial (p,p,1,p) & = & (1,p,1,p)+(p,1,1,p)-(p,p,1,1), \mbox{ etc. }
\end{eqnarray*}
The geometric offprint of this script is the same as that of a symplex (up to dimensional shift). Hence, the script is tight and hence the script is equivalent to a symplex.

\begin{exercise}
Let $S: 0\leftarrow \mathbb{Z}\leftarrow M_0(\{ p,q\})\leftarrow M_1(\{l\}), \partial l=p-q.$. Prove that $S\times S$ is also a 3D symplex.
\end{exercise}

Concerning tightness we have:
\begin{theorem}
Let $S_1,S_2$ be tight scripts then the symplicial cartesian product $S_1\times S_2$ is also tight.
\end{theorem}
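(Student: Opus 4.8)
The plan is to follow the proof of the tightness theorem for the cubic Cartesian product almost verbatim, with one simplification and one complication. The simplification: no preliminary ``extension'' of the product is needed, because the simplicial product already feeds the accumulators of $S_1$ and $S_2$ into $\partial$ and so carries its own accumulator $(1,1)$; after the harmless renormalization of dimensions discussed above it is an honest script, and it suffices to verify that each of its cells is tight. The complication: a cell of $S_1\times S_2$ may have one, or both, of its slots occupied by a factor's accumulator, and those configurations have to be treated separately. As a preliminary reduction I would take $S_1$ and $S_2$ minimal --- tight scripts are --- so that one may assume $\partial p = 1$ for every point $p$ of either factor, and I would discard floating cells, which are tight for trivial reasons.

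Fix a cell $(C_j^s, C_i^{\prime,t})$ of $S_1\times S_2$ with $s,t\ge -1$ (when $s=t=-1$ this is the accumulator, trivially tight). From the Leibniz formula $\partial(C_j^s, C_i^{\prime,t}) = (\partial C_j^s, C_i^{\prime,t}) + (-1)^s (C_j^s, \partial C_i^{\prime,t})$ one reads off that the geometrical boundary is the disjoint union
\[
\rb(C_j^s, C_i^{\prime,t}) = \bigl(\rb C_j^s \times \{C_i^{\prime,t}\}\bigr)\ \cup\ \bigl(\{C_j^s\}\times \rb C_i^{\prime,t}\bigr),
\]
the two parts being disjoint since a cell never lies in its own geometrical boundary. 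Hence every $(s+t-1)$-cycle $\Gamma$ supported there is uniquely of the form $\Gamma = (D, C_i^{\prime,t}) + (C_j^s, D')$ with $D \in \mathcal{M}_{s-1}(\rb C_j^s)$ and $D' \in \mathcal{M}_{t-1}(\rb C_i^{\prime,t})$, the first summand absent when $s=-1$ and the second when $t=-1$. I would then expand $\partial\Gamma$ by the Leibniz rule into the four terms $(\partial D, C_i^{\prime,t})$, $(-1)^{s-1}(D,\partial C_i^{\prime,t})$, $(\partial C_j^s, D')$, $(-1)^s(C_j^s,\partial D')$ --- the first supported over $\rb^2 C_j^s\times\{C_i^{\prime,t}\}$, the two middle ones over $\rb C_j^s\times\rb C_i^{\prime,t}$, and the last over $\{C_j^s\}\times\rb^2 C_i^{\prime,t}$. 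Since these three supports sit in the three pairwise distinct bidegrees $(s-2,t)$, $(s-1,t-1)$, $(s,t-2)$, the relation $\partial\Gamma=0$ splits into $\partial D = 0$, $\partial D' = 0$, and
\[
(-1)^{s-1}(D,\partial C_i^{\prime,t}) + (\partial C_j^s, D') = 0 .
\]

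To conclude I would invoke tightness of the two factors. The chain $D$ is an $(s-1)$-cycle supported in $\rb C_j^s$, so tightness of $C_j^s$ in $S_1$ forces $D = \lambda\,\partial C_j^s$ for some $\lambda\in\mathbb{Z}$; likewise $D' = \mu\,\partial C_i^{\prime,t}$. Substituting into the cross-relation yields $\bigl((-1)^{s-1}\lambda + \mu\bigr)(\partial C_j^s,\partial C_i^{\prime,t}) = 0$, and since $\partial C_j^s$ and $\partial C_i^{\prime,t}$ are nonzero in a minimal script --- the identity $\partial p = 1$ handling the point cases --- the element $(\partial C_j^s, \partial C_i^{\prime,t})$ of the free module is nonzero, so $\mu = (-1)^s\lambda$. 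Then $\Gamma = \lambda\bigl((\partial C_j^s, C_i^{\prime,t}) + (-1)^s(C_j^s, \partial C_i^{\prime,t})\bigr) = \lambda\,\partial(C_j^s, C_i^{\prime,t})$, so $\mathcal{Z}_{s+t-1}\bigl(\rb(C_j^s, C_i^{\prime,t})\bigr)$ is generated by the single cycle $\partial(C_j^s, C_i^{\prime,t})$; that is, the cell is tight. As the cell was arbitrary, $S_1\times S_2$ is tight, and the same argument iterates to give tightness of $S_1\times\cdots\times S_l$.

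The hard part will be the bookkeeping in the degenerate cases rather than the linear algebra. When a slot carries an accumulator ($s=-1$ or $t=-1$) one of the four terms of $\partial\Gamma$ disappears, and one must recheck that the surviving supports still fall in separate bidegrees; when a slot carries a point ($s=0$ or $t=0$) the term $(\partial C_j^s, D')$ secretly involves the factor's accumulator $1\in\mathbb{Z}$, so $\partial p = 1$ has to be used explicitly, both there and to keep $(\partial C_j^s, \partial C_i^{\prime,t})$ nonzero. One should also record that $D\mapsto (D, C_i^{\prime,t})$ and $D'\mapsto (C_j^s, D')$ are injective on chains, so the cancellations above are legitimate, and keep track of the degree shift --- the product's accumulator sitting in degree $-2$ --- that the renormalization absorbs.
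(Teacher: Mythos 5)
Your proposal is correct and is exactly what the paper intends: its entire proof of this theorem is ``Adapt the cubic case (exercise),'' and you have carried out that adaptation faithfully --- the bidegree splitting of $\partial\Gamma$, the use of tightness in each factor to get $D=\lambda\,\partial C_j^s$ and $D'=\mu\,\partial C_i^{\prime,t}$, and the cross-relation forcing $\mu=(-1)^s\lambda$ mirror the cubic proof line by line. Your observation that no accumulator extension is needed (since $\partial(p,1)=1$ is built into the simplicial product) and your explicit treatment of the $s=-1$ and $s=0$ slots are exactly the points the ``exercise'' asks the reader to check.
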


\begin{proof}
Adapt the cubic case (exercise).
\end{proof}
A similar result holds for $S_1 \times S_2 \times \ldots \times S_l$; in fact one can use associativity $(S_1\times S_2)\times S_3=S_1\times (S_2\times S_3)=S_1\times S_2\times S_3$.

\section{The simplicial refinement}
In this subsection we start from a script
$$
S:0\leftarrow \mathbb{Z}\leftarrow M_0\overset{\partial}{\longleftarrow} \ldots
$$
and first consider the  $1$-point script $P$
$$
C_0=\{p\},\partial p=1.
$$
The idea is to construct a canonical symplicial complex such that to every cell $C_j^k$ corresponds a unique chain of symplexes $\sigma(C_j^k)$ so that $\rb(\partial C_j^k)=supp \partial^2C_j^k=\{ 0\}$ which implies $\partial\sigma (C_j^k)=\sigma (\partial C_j^k)$, i.e. the script can be replaced via $\sigma$ by a symplicial complex.

The idea is based on the idea that, within $S\times P$,
$$
\partial (C_j^k,p)=(\partial C_j^k,p)+(-1)^k (C_j^k,1)
$$
so that $(C_j^k,1)$ is cobordant with $(-1)^{k-1}(C_j^k,p)$.

The algorithm is recursive and goes in stages.

\begin{itemize}
\item Stage 0: Identify $S$ with $(S,1)$ consisting of cells $(C_j^k,1)$ of which the dimension is shifted to $k-1$. In particular $(1,1)$ has dimension $-2$. Next for every point $(p_j,1)\in (S,1)$ we put $\sigma(p_j,1)=-(1,p_j)$.
\item Stage $k$: Suppose that we have completed stage $k-1$ and let $(C_j^k,1)\in (C^k,1)$. For each such element we create a new point $p_j^k$ and define
$$
\sigma (C_j^k)=(-1)^{k-1}(\partial C_j^k,p_j^k)\sim (C_j^k,1)
$$
where $\sim$ denotes the cobordism.

The dimensions are the same, it is a chain of symplexes because $\partial C_j^k=\sum$ symplexes and (symplex, point) is a symplex.

Also $\partial\sigma (C_j^k)=(\partial C_j^k,\partial p_j^k)=(\partial C_j^k,1)=\sigma (\partial C_j^k)$ whatever. Finally for every $C_j^{k+1}$, if $C_l^k$ occurs in $\partial C_j^{k+1}$, replace $C_j^k$ by $\sigma(C_l^k)$ and raise the dimension of $\sigma(C_l^k)$ by $+1$.
\item Last stage:  Cancel all elements $(C_j^k,1)$.
\end{itemize}

\begin{example}
Consider the disc $C_0=\{p_1,p_2\}, C_1=\{l_1,l_2\}, C_2=\{v\}, \partial l_1=\partial l_2=p_2-p_1, \partial v=l_2-l_1$.
First we are to replace $p_j\rightarrow -(1,p_j)$ then $\partial l_1=\partial l_2=-(1,p_2)+(1,p_1)$. Next we introduce points $q_1,q_2$ then we replace
$$
\sigma : l_j\rightarrow +(-(1,p_2)+(1,p_1),q_j)=-((1,p_2),q_j)+((1,p_1),q_j)
$$
now we replace
$$
\partial v=l_2-l_1\rightarrow -((1,p_2),q_2)+((1+p_1),q_2)+((1,p_2),q_1)-((1,p_1),q_1).
$$
So taking new point $q$, we obtain
$$
\sigma: v\rightarrow -((1,p_2),q_2)+((1,p_1),q_2)+((1,p_2),q_1)-((1,p_1,q_1),q);
$$
it is the sum of 4 triangles

  \begin{figure}[H]
    \label{fig:curvature1}
    \begin{center}
      \includegraphics[scale=1]{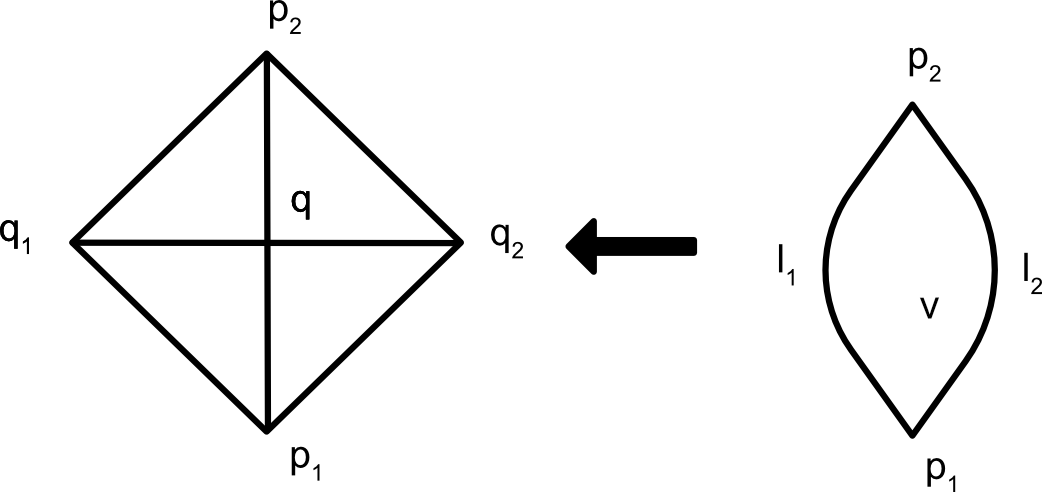}
      \caption{Simplicial refinement of two lines}
    \end{center}
  \end{figure}
\end{example}

It is important to know that every script can be refined to a simplicial complex. Note that the refinement of the sphere $S^2$ is an octahedron. For higher dimensions: similar story.

\vfill\eject

\section*{Affilations}

~

Paula Cerejeiras\\
CIDMA - Center for Research and Development in Mathematics and
Applications, Department of Mathematics,University of Aveiro, Campus Universit\'ario de Santiago, 3810-193 Aveiro, Portugal\\
pceres@ua.pt\\

~

Uwe K\"ahler\\
CIDMA - Center for Research and Development in Mathematics and
Applications, Department of Mathematics,University of Aveiro, Campus Universit\'ario de Santiago, 3810-193 Aveiro, Portugal\\
ukaehler@ua.pt\\

~

Teppo Mertens\\
Clifford Research Group, Department of Mathematical Analysis,
  Ghent University, Galglaan 2, B-9000 Ghent, Belgium\\
teppo.mertens@ugent.be\\

~

Frank Sommen\\
Clifford Research Group, Department of Mathematical Analysis,
  Ghent University, Galglaan 2, B-9000 Ghent, Belgium\\
frank.sommen@ugent.be\\

~

Adrian Vajiac\\
CECHA - Center of Excellence in Complex and Hypercomplex Analysis, Chapman University, One University Drive, Orange CA 92866\\
avajiac@chapman.edu\\

~

MihaelaVajiac\\
CECHA - Center of Excellence in Complex and Hypercomplex Analysis, Chapman University, One University Drive, Orange CA 92866\\
mbvajiac@chapman.edu\\

%
%
%
%


\end{document}